\newtheorem{fed}{Definition}[subsection]
\newtheorem{teo}[fed]{Theorem}
\newtheorem*{teo*}{Theorem}
\newtheorem{lem}[fed]{Lemma}
\newtheorem{cor}[fed]{Corollary}
\newtheorem{pro}[fed]{Proposition}
\theoremstyle{definition}
\newtheorem{rem}[fed]{Remark}
\newtheorem{conj}[fed]{Conjecture}
\newtheorem{Not}[fed]{Notations}
\newtheorem{exa}[fed]{Example}
\newcommand{\mat}{\mathcal{M}_n (\C) }
\newcommand{\matm}{\mathcal{M}_m (\C) }
\newcommand{\matreal}{\mathcal{M}_n (\R) }
\newcommand{\matmreal}{\mathcal{M}_m (\R) }
\def\barr{\begin{array}}
\def\earr{\end{array}}
\newcommand{\diag}[1]{\hbox{\rm diag}\left( #1\right)}
\newcommand{\CC}{\mathbb{C}}
\newcommand{\RR}{\mathbb{R}}
\newcommand{\F}{\mathbb{F}}
\def\prop{Proposition \ref}
\def\teor{Theorem \ref}
\def\lema{Lemma \ref}
\def\suml{\sum\limits}
\def\R{\mathbb{R}}
\def\C{\mathbb{C}}
\def\noi{\noindent}
\def\bma{\left[\begin{array}}
\def\ema{\end{array}\right]}
\def\ben{\begin{enumerate}}
\def\een{\end{enumerate}}
\newcommand{\IN}[1]{\mathbb {I} _{#1}}
\DeclareMathOperator{\leqp}{\underset {ij}{\leqslant}}
\DeclareMathOperator{\leqpi}{\leqslant}
\DeclareMathOperator{\geqp}{\geqslant}
\DeclareMathOperator{\leqpp}{\leqslant}
\def\ds{\displaystyle}
\def\noi{\noindent}
\definecolor{color}{RGB}{140,140,140}
\def\QED{\hfill $\square$}
\def\EOE{\hfill $\triangle$}
\def\QEDP{\tag*{\QED}}
\def\EOEP{\tag*{\EOE}}
\def\bdem{\begin{proof}}
\def\la{\lambda}
\def\pa{{\bf a}}
\newcommand{\matinv}{\mathcal{G}\textit{l}\,(n)}
\newcommand{\matrec}[1]{\mathcal{M}_{#1} (\mathbb{C})}
\def\N{\mathcal{N}}
\def\cB{\mathcal{B}}
\def\cF{\mathcal{F}}
\def\cM{\mathcal{M}}
\def\cP{\mathcal{P}}
\def\cS{\mathcal{S}}
\def\cU{\mathcal{U}}
\def\cW{\mathcal{W}}
\def\uno{\mathds{1}}
\def\E{\mathds{E}}
\newcommand{\matu}{\mathcal{U}(n)}
 \DeclareMathOperator{\tr}{tr}
     \DeclareMathOperator{\sop}{supp}
\newcommand{\ba}{\mathbf a }
\def\pausa{\medskip\noindent}
\newcommand{\pint}[2]{\displaystyle \left \langle #1 \,, \, #2  \right\rangle}
\newcommand{\hil}{\mathcal{H}}
\newcommand{\cene}{\mathbb{C}^n}
\newcommand{\matsa}{\mat_{sa}}
\newcommand{\matah}{\mat_{ah}}
\newcommand{\matsauno}{\cM_n^1(\C)_{sa}}
\newcommand{\matsao}{\cM_n^0(\C)_{sa}}
\newcommand{\matpos}{\mathcal{M}_n(\C)^+}
\newcommand{\convk}{\xrightarrow[k\rightarrow\infty]{}}
\newcommand{\frs}{S_{\cW_w}}
\def\ssec{\cW = \{ W_i\}_{i\in \IM}}
\def\FS {\cW_w\,}
\def\sfram{\cW_w  = (w_i\, ,\, W_i)_{i\in \IM}}
\def\msfram{\cW_w  = (w_i\, ,\, W_i)_{i\in \IN{m}} \,}
\def\inv{^{-1}}
\def\beq{\begin{equation}}
\def\eeq{\end{equation}}
\def\N{\mathbb{N}}
\def\F{\mathcal{F}}
\def\cB{\mathcal{B}}
\def\cU{\mathcal{U}}
\def\cP{\mathcal{P}}
\def\ese{\mathcal{M}}
\def\eme{\mathcal{M}}
\def\ene{\mathcal{N}}
\def\cV{\mathcal{V}}
\def\cW{\mathcal{W}}
\def\inc{\subseteq}
\def\orto{^\perp}
\def\inc{\subseteq}
\def\sii{ if and only if }
\def\inv{^{-1}}
\def\*A{\#\sb A}
\def\H{{\cal H}}
\def\cP{{\cal P}}
\def\cS{{\cal S}}
\def\cM{{\cal M}}
\def\rai{^{\frac {1}{\,2}}}
\def\mrai{^{\frac{-1}{\ 2}}}
\def\api{\langle}
\def\cpi{\rangle}
\def\noi{\noindent}
\def\bm{\left(\begin{array}}
\def\em{\end{array}\right)}
\def\ben{\begin{enumerate}}
\def\een{\end{enumerate}}
\def\barr{\begin{array}}
\def\earr{\end{array}}
\def\inv{^{-1}}
\def\H{{\cal H}}
\def\la{\lambda}
\def\In{\mathbb {I} _n}
\def\IM{\mathbb {I} _m}
\def\lh{{L(\H)}}
\def\lh+{{\lh^+}}
\def\noi{\noindent}
\def\inc{\subseteq}
\def\bm{\left(\begin{array}}
\def\em{\end{array}\right)}
\newcommand{\peso}[1]{ \quad \text{ #1 } \quad }
\newcommand{\argmin}{\operatornamewithlimits{arg\,min}}
\newcommand{\trivial}{\{0\}}
\newcommand{\gen}[1]{\mbox{span}\left\{#1\right\}}
\def\bsim{ \R_*^m }
\newcommand{\FP}{\text{FFP}\, }
\newcommand{\fp}{\text{FP}\, }
\date{}
\begin{document}
\title{\bf The structure of minimizers of the frame potential \\ 
on fusion frames
\footnote{Keywords: Fusion frames, frame potential, majorization, Hadamard product.}
\footnote{Mathematics subject classification (2000): 42C15, 15A60.}}
\author{Pedro G. Massey, Mariano A. Ruiz  and Demetrio Stojanoff\thanks{Partially supported by CONICET 
(PIP 5272/05) and  Universidad de La PLata (UNLP 11 X472).} }

\maketitle

\begin{abstract}
In this paper we study the fusion frame potential, that is a generalization of the Benedetto-Fickus 
(vectorial) frame potential to the finite-dimensional fusion frame setting. 
The structure of local and global minimizers of this potential is studied, when we restrict 
the frame potential  to suitable sets of  fusion frames. These minimizers are related to tight 
fusion frames as in the classical vector frame case. Still, tight fusion frames 
are not as frequent as tight frames; indeed we show that there are choices of 
parameters involved in fusion frames for which no tight fusion frame can exist. 
Thus, we exhibit necessary and sufficient conditions for the existence of tight 
fusion frames with prescribed parameters, involving the so-called Horn-Klyachko's 
compatibility inequalities.
The second part of the work is devoted to the study of the minimization of the 
fusion frame potential on a fixed sequence of subspaces, varying the sequence of weights. 
We related this problem to the index of the Hadamard product by positive 
matrices and use it to give different characterizations of these minima. 
\end{abstract}

\tableofcontents

\section{Introduction}
Fusion frames were introduced by P. G. Casazza and G. Kutyniok  under the name of ``frame of subspaces'' in \cite{[CasKu]}. They are a generalization of the usual frames of vectors for a Hilbert space $\hil$; indeed frames of vectors can be treated as ``one dimensional fusion frames''.  
During the last years, the theory of  fusion frames has been a fast-growing area. Several applications of fusion frames have been studied, for example, 
sensor networks \cite{[CasKuLiRo]}, neurology \cite{RJ}, 
coding theory \cite{Bod}, \cite {Pau}  , \cite{GK}, among others. 
Specifically, applications which require distributed processing can be well 
described and studied using fusion frames. We refer the reader to the work by 
P.G. Casazza, G. Kutyniok and S. Li \cite{[CasKuLi]} and the references therein,  
for a detailed treatment of the fusion frame theory. Further developments 
can be found in  \cite{CF}, \cite {[CasKu2]} and \cite{RS}. 

In finite dimension, a fusion frame is a sequence of subspaces of $\mathbb{F}^n$ 
($\mathbb{F}=\CC$ or $\RR$) together with a set of positive weights such that the  
weighted  sum of the orthogonal projections to these subspaces (called {\it fusion 
frame operator}) is a positive invertible operator (see Definition  \ref{def: fusionframes}).  
As in the case of vector frames, it is usually desired this invertible operator to 
be a multiple of the identity. In this case, the fame is called a tight fusion frame, also noted TFF. 
However, tight fusion frames might not exist for a fixed choice of dimensions 
$\mathbf d =(d_i)_{i\in \IM}$ of the  subspaces, 
for any sequence of weights $w=(w_i)_{i\in \IM}\,$ (see the discussion following \prop{restric}).

Inspired on the work in classical frame theory (see \cite{[BF],MR}), 
we study a convex functional on fusion frame operators (the FF potential or FFP),
also studied by Casazza and Fickus in \cite{CF}, which generalizes 
the Benedetto-Fickus frame potential. In this paper we 
analyze its local and global minima.  
This is motivated by the fact that local (global) minimizers characterize unit norm tight 
vector frames  
(see \cite{[BF],Bod,Pau,casazza2,MR}). 
Since the FF potential can be seen as a ``measure of orthogonality'' of the frame vectors, 
it provides also an interesting geometrical description of fusion frames. 
These considerations motivate the  study this type of minimizations in the fusion frame context. 
A related study can be found in \cite{CF}, a work that the authors %learned of 
became aware in an 
advanced stage of writing this paper.
%While writing this paper the authors became aware of a related study done by P. Casazza and M. Fickus in \cite{CF}.

The main tool used in \cite{MR} for these  problems for vector frames, 
namely majorization of matrices, 
can be replaced in the context of fusion frames by the  theory developed by Horn  
and Klyachko in order to have a spectral characterization of hermitian matrices 
which are the  sum of a set of hermitian matrices. 
For example, this approach provides necessary and sufficient conditions of existence of TFF's, summarized in a family of inequalities. This technique, although seems to be rather impractical due to the %rather
complex conditions involved, becomes 
an useful tool in the study of the spectral structure of the FF potential minimizers.

We first consider the problem of existence of TFF's. We show some dimensional restrictions
on the subspaces regarding this problem, and then give equivalent conditions for the 
case of fixed dimensions and weights. We refer to \cite{CF} for further 
developments in this direction.

The rest of the paper deals with the minimization of the FF potential on some sets of Fusion 
Bessel sequences (i.e. sets of projections and weights whose fusion frame operator is not 
necessary invertible). Mainly, we work on Bessel sequences with fusion frame operator 
of trace one. This is a natural restriction in order to avoid scalar multiplications, 
and it allows an interpretation of the FF potential as a measure of how far is the 
fusion frame operator to the suitable multiple of the identity corresponding to 
the (possibly non-existing) tight fusion frames with trace one. A detailed discussion
of this approach can be found in subsection \ref{2.2}.

The minimization of the FFP is done in three different settings: first, by fixing the weights $w$ 
and the dimensions $\mathbf d$ of the subspaces. Then, fixing only the dimensions $\mathbf d$. 
Finally, we consider a fixed sequence of subspaces $(W_i)_{i\in \IM} $, and optimize over the set 
of admissible weights. In the three cases, the minimization is made under the previously mentioned ``trace one" restriction.  
    
For the first problem, a geometrical approach similar to that done in \cite{MR} allows us 
to obtain a characterization of local minimizers of the FF potential: they are orthogonal sums 
of tight frames on each eigenspace of the frame operator.  
Then, using Horn and Klyachko techniques, we 
prove that all minimizers (even those which are local minimizers) 
have the same eigenvalues, with the same multiplicities. 
Similar results are obtained for the second problem (fixing only the dimensions of the subspaces).

The last section of the paper is devoted to the study of the optimization of the fusion frame potential of fusion frames obtained from a fixed sequence of subspaces in $\CC^n$ which generate the whole space. 
Since every sequence of weights makes it a fusion frame, we seek for the 
best choice of weights, meaning those which minimize (globally) the FF potential. 
Then, we establish  a connection between this optimization problem and 
the {\it Hadamard indexes} studied in \cite{[CS]}  
(which involve the Hadamard or entry-wise product of matrices) of a 
kind of Gramm matrix associated to  the fixed subspaces. 
Using these tools, we get 
a characterization of the set of optimal weights, and a way to compute them under 
some reasonably assumptions on the initial sequence of subspaces. This analysis 
seems to be new even  for the case of vector frames.

However, as it is shown by an example, the  minimizers  could ``erase'' some of 
the initial subspaces (i.e. the set of optimal weights could have zeros).  
Moreover, it is possible to obtain a minimizer which is  a Bessel sequence of subspaces 
which is not generating, a phenomenon  which does not happen in the 
previous settings. Motivated by this problem, we study the geometry of the 
set of all weights $w$ which minimize the FF potential, and in particular their 
possible supports (namely, those sub-indexes $i$ such that $w_i>0$). 
At the end of the section, we present some examples which illustrate
these type of anomalies. 

The paper is organized as follows: Section 2 contains preliminary definitions on 
fusion frames and the basic notation used throughout the paper.  This section 
also contains  a  brief exposition of Horn-Klyachko's compatibility inequalities. 
Section 3 is devoted to the study of minimizers of the FF potential, restricted 
to the sets of fusion frames detailed before. 
In Section 4 we analyze the problem of minimize the FF potential 
for a fixed sequence of subspaces, varying the weights. 
The paper ends with an appendix containing definitions and several 
results concerning Hadamard indexes of positive matrices, which are 
related to the contents of Section 4.

We wish to acknowledge to P. Casazza and M. Fickus for letting us know
about their excellent work \cite{CF}, which is closely related 
with the present paper.

\def\subim{_{i\in \IN{m}}\,}
\def\subir{_{i\in \IN{r}}\,} 

\section{Preliminaries and Notations.}
In this paper $\mat$ denotes the algebra of complex $n\times n$ matrices, 
$\matinv$ the group of all invertible elements of $\mat$, $\matu$ the group 
of unitary matrices,  $\matsa$ (resp. $\matah$) denotes the real subspace of 
hermitian (resp. anti-hermitian) matrices, $\matpos$ the set of positive semidefinite
matrices, and $\matinv^+ = \matpos \cap \matinv$. Given $T \in \mat$, $R(T)$ denotes the
image of $T$, $N(T)$ the null space of $T$, 
$\sigma (T)$ the spectrum of $T$, $\tr T$ the trace of $T$,
and rk $T $  the rank of $T$.

\pausa 
Given $m \in \N$ we denote by $\IM = \{1, \dots , m\}$ and 
$\uno = \uno_m  \in \R^m$ denotes the vector with all its entries equal to $1$. 
If $v \in \R^n$, we denote by 
$\mbox{\rm diag}(v) \in \mat$ the diagonal matrix with $v$ in its diagonal, 
and by  
$v^\downarrow\in \RR^n$ the vector
obtained by re-arrangement of the coordinates of $v$ in
non-increasing order. If $T\in \matsa\,$, we denote by $\la(T) \in \R^n$ the vector 
of eigenvalues of $T$, counted with multiplicities, in such 
a way that $\la (T) = \la(T)^\downarrow$. 

\pausa
Given a subspace $W\inc \cene$, we denote by $P_W \in \matpos$ the orthogonal 
projection onto $W$, i.e. $R(P_W) = W$ and $N(P_W) = W^\perp$. 
For vectors on $\cene$ we shall use the euclidean norm, but for matrices $T\in \mat$, we shall use both the 
spectral norm $\|T\| = \|T\|_{sp}= \max\limits_{\|x\|=1}\|Tx\|$, 
 and the Frobenius norm $\|T\|_{_2} = (\tr \, T^*T )\rai = 
\big( \, \suml_{i,j \in \In } |T_{ij}|^2 \, \big)\rai$. This norm is induced by the inner product
$\api A,\ B\cpi=  \tr \, B^*A \,$,  for $A, B \in \mat$.

\subsection{Frames of subspaces, or fusion frames for $\mathbb C^n$}

We begin by defining the basic notions of fusion frame theory in the finite dimensional 
context. For an introduction to fusion frames for general Hilbert spaces, 
see \cite{[CasKu]},  \cite{[CasKuLi]} or \cite{RS} . 
Briefly, a fusion frame for $\cene$ is a generating sequence of subspaces, equipped 
with weights assigned to each subspace. Nevertheless, we prefer to give the ``frame style" definition, 
which adjusts better to our purposes.

\begin{fed}\label{def: fusionframes} \rm
Let  $\ssec$ be closed subspaces of $\hil \cong\C^n$, and
$w=  \{w_i\}_{i\in \,\IM} \in \R_{>0}^m$.
The sequence $\sfram$ is a {\it fusion frame  (FF) for $\hil$},
if there exist $A,B>0$  such that
\begin{equation}\label{frame de sub}
A\|f\|^2\leq \sum_{i\in \IM}w_i^2 \, \|P_{W_i}f\|^2 \leq B\|f\|^2 \peso {for every $f\in \hil$
\ .}
\end{equation}
If only the right-hand side inequality in \eqref{frame de sub} holds,  then we say that $\cW_w$ is a {\it Bessel sequence of subspaces} (BSS) for $\hil$. The {\bf frame operator} of $\FS $ is defined by 
the formula 
\beq\label{SWw}
\frs = \sum_{i\in \IM} w_i^2 \, P_{W_i} \in \matpos \ .
\eeq
Observe that 
$\FS$ is a FF \sii $\frs  \in \matinv^+$ and, in this case $A \, I_n \le \frs  
\le B \, I_n \,$. 
We say that 
$\cW_w$ is  a {\it tight} FF (TFF) if $A = B \,$, in other words, if $\frs = A I_n\, $. 
\EOE
\end{fed}

\begin{rem}\label{de FS a vec}
Let $\sfram$ be a BSS for $\cene$. 
For each $i\in \IM\,$, we can take an o.n.b. $\cB_i = \{e_j^{(i)}\}_{j\in J_i}$ of $W_i\,$. 
Hence, for every $f \in \hil$, we have that  
$$ 
P_{W_i}f=\sum_{j\in J_i}\langle f,e_j^{(i)}\rangle\,e_j^{(i)} \, , \ \mbox{ for }  \  i\in \IM 
 \  \implies  \ 
 S_{\cW_w} f =\sum_{i\in \IM}w_i^2 P_{W_i} f=
 \sum_{i\in \IM}\sum_{j\in J_i} \langle f,w_i\,e_j^{(i)} \rangle\, w_i \,e_j^{(i)} \ .
$$
Therefore, $\cW_w$ induces a vector Bessel sequence $\F=\{w_i\, e_j^{(i)}:\ i\in \IM \, ,\ j\in J_i\}$ 
which has a very useful property: Its frame operator $S_\F = S_{\cW_w}\,$. 
\EOE
\end{rem}
\def\d{\mathbf d}

\subsection{Sets of fusion frames and the FF-potential}\label{2.2}
We shall establish
several notations regarding sets of FF's and BSS's :

\begin{Not} \label{Bmn}
Fix $n,\, m \in \N$ and consider  a Hilbert space   $\hil \cong \cene$. 
\ben
\item We shall denote by $\cS_{m\, , n} $ the set of all FF's of the form 
$\FS = (w, \cW) = (w_i,W_i)_{i\in\IM}$, where $w \in \R_{>0}^m\,$  and $\cW$ a generating 
sequence of subspaces of $\hil$. 
\item Given a sequence $\d \in \N^m$ such that $\tr \d = \suml_{i \in \IM} d_i \ge n$, we denote by 
\beq\label{con d}
\cS_{m\, , n}(\d)  = \big\{ \FS \in \cS_{m\, , n}  \ :  \ \dim W_i = d_i 
\peso{for every} i \in \IM \big\} \ , 
\eeq
Similarly, we denote by  $\cB_{m\, , n}(\d) $ 
 the set of BSS's with the same dimensional restrictions. 
\item Given  $v \in \R_{>0}^m\,$, we denote by
\beq\label{conjuntos}
\cB_{m, \, n}(\mathbf d,v)=\{\cW_w \in \cB_{m\, , n}(\d ) : w = v \}
\peso{and} 
 \cS_{m, \, n}(\mathbf d,v)= \cB_{m, \, n}(\mathbf d,v) \cap \cS_{m\, , n}(\d) \ ,
\eeq
the subsets of $\cB_{m\, , n}(\d) $ and $\cS_{m\, , n}(\d)$ with a fixed sequence of weights $v$.
 
\item Finally, we denote by 
\beq\label{con tr1}
\barr{rl}
\cB_{m\, , n}^1(\d)  
= \big\{ \FS \in \cB_{m\, , n}(\d) : \tr S_{\cW_w}=\suml_{i \in \IM}\,  w_i^2\,d_i=1  \big\} 
&  \ .
\earr
\eeq 
and $\cS_{m\, , n}^1(\d) =\cB_{m\, , n}^1(\d) \cap \cS_{m\, , n}(\d)\,$. 
\item 
We say that a pair 
$(\mathbf d,w)\in \N^m\times\RR_{>0}^m\,$ 
is {\bf normalized} if $\tr \d \ge n$ and $\suml_{i \in \IM}\,  w_i^2\,d_i=1$. Observe that 
$(\mathbf d,w)$ is normalized \sii $\cB_{m, \, n}(\mathbf d,w) \inc \cB_{m\, , n}^1(\d)  $. 
\EOE
\een
\end{Not}

\pausa
The following definition is suggested by the classical Benedetto-Fickus potential, whose value in a vector frame $\cF$ can be calculated as FP$(\F ) = \tr S_\cF  ^{\, 2}\,$. 

\begin{fed} \rm 
Given a BSS $\cW_w=(w_i,W_i)_{i \in \IM}\, $, 
the Benedetto-Fickus {\it fusion frame potential} (FF-potential) of $\cW_w$ is given by:
\begin{equation}\label{defi del fusion pot} 
\FP( \cW_w)=\suml_{i,j=1}^m w_i^2w_j^2\tr(P_{W_i}P_{W_j}) = \tr \, S_{\FS} ^{\, 2}\ .
\end{equation}
Notice that in this case $\FP(\cW_w)=\text{FP}(\cF)$, for any vector Bessel sequence $\F$ 
obtained from $\cW_w$ as in Remark \ref{de FS a vec}. We define also the following matrix:
\begin{equation}\label{defi qpot}
\mathbf P_q(\cW_w)=\suml_{i,\,j=1}^m w_i^2w_j^2\,|P_{W_i}\,P_{W_j}|^2=
\suml_{i,\,j=1}^m w_i^2w_j^2\,P_{W_j}\,P_{W_i}\,P_{W_j} \in \matpos\ .
\end{equation} 
The matrix $\mathbf P_q(\cdot)$ is related to the so-called q-potential
 \cite{P} defined in the more general context of reconstruction systems. Notice that the Benedetto-Fickus fusion frame potential can be computed in terms of $\mathbf P_q(\cW_w)$, since $\FP(\cW_w)=\tr \mathbf P_q(\cW_w)\,$.
 \EOE
\end{fed}

\pausa
The scope of this paper is to study minimizers of the FF-potential. In order to avoid 
scalar multiplications (note that $\FP ( \cW_{t \cdot \, w} ) = t^4 \, \FP ( \FS )\,$) we shall restrict
ourselves to minimize the FF-potential 
on subsets of $\cS_{m\, , n}^1(\d)$ or $\cB_{m\, , n}^1(\d)$, for $\d$ and $m$
fixed. In other words, we shall minimize the FF-potential for those 
frames $\FS$ such that $\tr S_{\FS}  = 1$. 

\pausa
This specific restriction is justified because, if there exist tight FF's in 
$\cS_{m\, , n}(\d)$, then their FF-potential and frame bounds 
are determined exactly by the trace 
of their frame operators. Namely, if $\FS\in \cS_{m\, , n}(\d) $ is tight, and 
$\tr S_{\FS}  = a$, then $S_{\FS}  = \frac an \, I_n \,$ and 
$\FP( \cW_w)  =  \frac {\  a^2}{n} \, $.

\pausa
Even in the case that there are no TFF's in $\cS_{m\, , n}(\d)$, this restriction 
seems to be quite natural. Indeed, for BSS's with fixed trace,  the FF-potential 
can be seen as a measure of the (Frobenius) distance of their frame operators to a 
{\bf fixed} multiple of the identity: 

\begin{pro}\label{dist a la ident} 
Let $ \FS \in \cS_{m, \, n}^1(\d)$. Then 
\[
\big\|\frac{1}{\,n} \ I_n - S_{\FS}\big\|_{_2}^2= \tr S_{\FS}^{\, 2} \ - \ \frac{1}{\,n}
 \ =  \ \FP( \cW_w) \ - \ \frac{1}{\,n} \ \ .
\]
\end{pro}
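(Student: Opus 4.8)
The plan is to expand the squared Frobenius norm directly via the inner product $\langle A,B\rangle=\tr B^*A$ that induces $\|\cdot\|_{_2}$, and then to exploit the normalization $\tr S_{\FS}=1$ that is built into the definition of the superscript-$1$ sets. This is the whole content of the statement; there is no majorization or Horn--Klyachko machinery needed here, just linearity of the trace.

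First I would observe that $S_{\FS}\in\matpos$ is self-adjoint, and hence so is the difference $\frac 1n I_n - S_{\FS}$. Consequently its squared Frobenius norm equals $\tr\big[(\frac 1n I_n - S_{\FS})^2\big]$. Expanding the square by linearity of the trace gives the three terms
\[
\big\|\tfrac 1n I_n - S_{\FS}\big\|_{_2}^2 \ = \ \tfrac{1}{n^2}\,\tr I_n \ - \ \tfrac 2n\,\tr S_{\FS} \ + \ \tr S_{\FS}^{\,2}\ .
\]

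Next I would substitute three elementary evaluations: $\tr I_n=n$; then $\tr S_{\FS}=1$, because $\FS\in\cS_{m,n}^1(\d)\inc\cB_{m,n}^1(\d)$ so the trace-one restriction applies; and finally $\tr S_{\FS}^{\,2}=\FP(\cW_w)$, which is exactly \eqref{defi del fusion pot}. Collecting the constants $\frac 1n-\frac 2n=-\frac 1n$ then yields $\tr S_{\FS}^{\,2}-\frac 1n=\FP(\cW_w)-\frac 1n$, which delivers both equalities of the statement simultaneously.

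I do not expect any genuine obstacle: the computation is pure linearity of the trace, and the single point that must be invoked with care is that the middle cross term collapses to $\frac 2n$ precisely because of the defining normalization $\tr S_{\FS}=1$ of $\cS_{m,n}^1(\d)$. Everything else is a one-line algebraic simplification, and the self-adjointness of $S_{\FS}$ is what lets me replace the generic expression $\tr\big[(\frac1n I_n-S_{\FS})^*(\frac1n I_n-S_{\FS})\big]$ by the square.
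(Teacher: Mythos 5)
Your proof is correct and coincides with the paper's own argument: both expand $\bigl\|\frac{1}{\,n}\,I_n - S_{\FS}\bigr\|_{_2}^2 = \tr\bigl(\frac{1}{n^2}\,I_n - \frac{2}{n}\,S_{\FS} + S_{\FS}^{\,2}\bigr)$ by linearity of the trace and then use the normalization $\tr S_{\FS}=1$ together with $\FP(\cW_w)=\tr S_{\FS}^{\,2}$. Your explicit remark that self-adjointness of $S_{\FS}$ justifies replacing $T^*T$ by the square is a small point the paper leaves implicit, but the route is the same.
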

\proof
Since $\tr S_{ \FS}=1$, a direct computation shows that 
\beq
\big\|\frac{1}{\,n} \ I_n - S_{ \FS}\big\|_{_2}^2=\tr \left(\frac{1}{n^2} \ I_n - \frac{2}{n}\ S_{ \FS}
+ S_{ \FS} ^{\, 2} \right)  = \tr S_{\FS}^{\, 2} \ - \ \frac{1}{\,n}  \ \ . 
\QEDP
\eeq

\pausa
The last result shows that if there exist tight FF's in 
$\cS_{m ,\,  n}^1(\d)$, then they are the unique global minimizers of the FF-potential 
on $\cS_{m ,\,  n}^1(\d)$. But in the case that there are no TFF's in $\cS^1_{m ,\,  n}(\d)$, 
the minimization of the FF-potential becomes more interesting: 
it provides the elements of  $\cS_{m ,\,  n}^1(\d)$ that can be expected to have 
the best properties. 

\pausa
In this paper we deal mostly with these type of minimizations under two different 
further restrictions: we work in the set 
$\cB_{m ,\,  n}(\d , w)\inc \cB_{m ,\,  n}^1(\d) $ for a 
fixed normalized pair $(\d \, , \, w)$, 
or we fix a generating  sequence $\cW$ of subspaces, and minimize 
the FF-potential over all sequences $w\in \R_{\geq 0} ^m$ 
such that $\FS \in \cB_{m ,\,  n}^1(\d)$.

\subsection{Klyachko-Fulton approach}

Recall that given $x\in \RR^n$, we denote by $x^\downarrow\in \RR^n$ the vector
obtained by re-arrangement of the coordinates of $x$ in
non-increasing order. Given $x,\,y\in \RR^n$ we say that $x$ is
{\it submajorized} by $y$, and write $x\prec_w y$, if
$\suml_{i=1}^k x^\downarrow _i\leq \suml_{i=1}^k y^\downarrow _i \,$ 
for every $k\in \In \,$.  
If we further have that
$\tr(x):=\suml_{i=1}^nx_i=\suml_{i=1}^n y_i$ then we say that $x$ is
majorized by $y$, and write $x\prec y$.

\begin{exa}\label{ejem}
As an elementary example, that we shall use in what
follows, let $x\in \RR_{\geq 0}^n$ and $0\leq a\leq \tr(x)\leq b$.
The reader can easily verify that 
$\frac{a}{n}\,\uno_n \prec_w x\prec_w b\, e_1 \, $. \EOE
\end{exa}

\pausa
(Sub)majorization between vectors is extended by T. Ando in \cite{Ando} 
to (sub)majorization
between self-adjoint matrices as follows : 
given $A,\,B\in
\matsa\,$, we say that $A$ is submajorized by $B$, and
write $A\prec _w B$, if $\lambda(A)\prec_w \lambda(B)$. If we
further have that $\tr(A)=\tr(B)$ then we say that $A$ is majorized
by $B$ and write $A\prec B$.

\pausa
Although simple, submajorization plays a central role in
optimization problems with respect to convex functionals and
unitarily invariant norms, as the following result shows (for a
detailed account in majorization see Bhatia's book \cite{Ba}).

\begin{teo}\label{props submayo y Klyachko}\rm
Let $A,\,B\in \matrec{n}^{sa}$. Then, the following statements are
equivalent:
\begin{enumerate}
\item $A\prec_w B$.
\item For every unitarily invariant norm $\|\cdot\|$ in $\mat$ we have
$\|A\|\leq \|B\|$.
\item For every increasing convex function $f:\RR\rightarrow \RR$ we
have $\tr f(A) \leq \tr f(B) $.
\end{enumerate}Moreover, if $A\prec_w B$ and there exists an
increasing  strictly convex function $f:\RR\rightarrow \RR$ such
that $\tr f(A) = \tr f(B) $ then there exists $U\in \cU(n)$ such that
$A=U^*BU$. \QED
\end{teo}

\pausa
In what follows we describe the basic facts about the spectral
characterization of the sums of hermitian matrices obtained by
Klyachko \cite{Klya} and Fulton \cite{Ful}.
Let 
$$
\mathcal K_r^n=\big\{(j_1,\ldots,j_r)\in \In^r :\  j_1<j_2\ldots<j_r \big\} \ . 
$$
For $J=(j_1,\ldots,j_r)\in \mathcal K_r^n$, define the
associated partition
$$
\lambda(J)=(j_r-r,\ldots,j_1-1) .
$$
Denote by $LR_r^{\,n}(m)$ the set of
$(m+1)$-tuples $(J_0,\ldots,J_m)\in (\mathcal K_r^n)^{m+1}$, such
that the Littlewood-Richardson coefficient of the associated
partitions $\lambda(J_0),\ldots,\lambda(J_m)$ is positive, i.e. one
can generate the Young diagram of $\lambda(J_0)$ from those of
$\lambda(J_1),\ldots,$ $\lambda(J_m)$ according to the
Littlewood-Richardson rule (see \cite{Ful0}). With these notations
and terminologies we have
\begin{teo}[\cite{Klya,Ful}]\label{teoK}\rm
Let $\lambda_i=\lambda_i^\downarrow=(\lambda^{(i)}_1,\ldots,\lambda^{(i)}_n)\in
\RR^n$ for $i=0,\ldots,m$. Then, the following statements are
equivalent:
\begin{enumerate}
\item There exists $A_i\in \matsa$ with
$\lambda(A_i)=\lambda_i$ for $0\leq i\leq m$ and such that
$$
A_0=A_1+\ldots+A_m \ .
$$
\item For each $r\in \{1,\ldots,n\}$ and $(J_0,\ldots,J_m)\in
LR_r^{\,n}(m)$ we have \begin{equation}\label{comp ec} \sum_{j\in
J_0}\lambda^{(0)}_j\leq\sum_{i=1}^m\sum_{j\in J_i}\lambda^{(i)}_j
\end{equation} plus the condition $\suml_{j=1}^n\lambda^{(0)}_j=
\suml_{i=1}^m\suml_{j=1}^n\lambda^{(i)}_j$.
\end{enumerate}
Moreover, if $(A_i)_{i=0}^m$ are as in item 1. above and $(J_0,\ldots,J_m)\in
LR_r^{\,n}(m)$ satisfy equality in \eqref{comp ec}, then there exists a subspace $L\subseteq \CC^n$ with dim $L=r$, that simultaneously reduces $A_i$ for $0\leq i\leq m$ and such that $\lambda(P_L\,A_i)=(\lambda^{(i)}_j)_{j\in J_i}$, where $P_L$ denotes the orthogonal projection of $\CC^n$ onto $L$.
\QED
\end{teo}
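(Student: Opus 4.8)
The statement to be proved is the Klyachko--Fulton solution of Horn's conjecture, a landmark result whose complete proof is far beyond a short sketch; I will instead outline the conceptual skeleton on which I would build, following the geometric and representation-theoretic circle of ideas. The equivalence should be organized as two opposite implications of very different difficulty. The implication $(1)\Rightarrow(2)$, the necessity of the inequalities, is the ``soft'' direction, and it is where I would start. Given Hermitian $A_i\in\matsa$ with $\la(A_i)=\lambda_i$ and $A_0=\sum_{i=1}^m A_i$, I would fix the eigenflags of the $A_i$ and consider, for $(J_0,\ldots,J_m)\in LR_r^{\,n}(m)$, the associated Schubert varieties in the Grassmannian $\mathrm{Gr}(r,n)$ of $r$-planes in $\CC^n$. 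The positivity of the Littlewood--Richardson coefficient of $\lambda(J_0),\ldots,\lambda(J_m)$ means precisely that the product of the corresponding Schubert classes meets the class of a point, so for the (generic) eigenflags these Schubert cells intersect nontrivially; any $L$ in the intersection is an $r$-dimensional subspace sitting in the prescribed Schubert position with respect to all eigenflags simultaneously. Estimating $\tr(P_L A_i P_L)$ from the Schubert position by the Courant--Fischer / Ky Fan variational principle and summing the identity $A_0=\sum_i A_i$ restricted to $L$ then yields exactly \eqref{comp ec}; the trace-equality condition is just $\tr A_0=\sum_i \tr A_i$.

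The reverse implication $(2)\Rightarrow(1)$ is the hard heart of the theorem, and it is where I expect the genuine obstacle to lie. The natural plan is to realize the set of admissible eigenvalue data as a moment (Kirwan) polytope: the coadjoint orbits $\mathcal O_{\lambda_i}=\{A\in\matsa : \la(A)=\lambda_i\}$ carry symplectic structures, and solvability of $A_0=\sum_{i=1}^m A_i$ amounts to the origin lying in the image of the moment map for the diagonal $\cU(n)$-action on $\prod_i \mathcal O_{\lambda_i}\times(-\mathcal O_{\lambda_0})$. By the convexity theorems this image is a convex polytope, so the problem reduces to identifying its defining half-spaces; Klyachko's theorem is exactly the assertion that these facets are the Schubert inequalities \eqref{comp ec}. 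Making this identification rigorous is the crux, and it ultimately rests on the equivalence between the eigenvalue cone and the cone of $GL_n$ tensor-product data: one passes to rational $\lambda_i$, clears denominators to dominant weights, and uses that the multiplicity of $V_{\lambda_0}$ in $V_{\lambda_1}\otimes\cdots\otimes V_{\lambda_m}$ is positive precisely when \eqref{comp ec} holds. The indispensable input here is the saturation property (Knutson--Tao), which guarantees that no scaling artifact is introduced, so that validity of the inequalities is not merely necessary but also sufficient.

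Finally, the ``moreover'' (rigidity) clause I would obtain by analysing the equality case inside the necessity argument. When $(J_0,\ldots,J_m)$ saturates \eqref{comp ec}, every variational inequality used to bound $\tr(P_L A_i P_L)$ must hold with equality for the subspace $L$ produced by the Schubert intersection; equality in the Ky Fan bound forces $L$ to be spanned by eigenvectors of each $A_i$ in the position prescribed by $J_i$, whence $L$ reduces every $A_i$ and $\la(P_L A_i)=(\lambda^{(i)}_j)_{j\in J_i}$. Thus tightness of a single inequality propagates to a simultaneous block decomposition, which is exactly the asserted reduction. The main difficulty throughout remains the sufficiency direction: proving that the Schubert/Littlewood--Richardson inequalities cut out the entire eigenvalue polytope requires the full machinery of Schubert calculus together with the saturation theorem, and it is this step, rather than the necessity or the rigidity analysis, that carries the real weight of the theorem.
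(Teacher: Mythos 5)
The paper gives you nothing to compare against here: Theorem \ref{teoK} is imported wholesale from Klyachko \cite{Klya} and Fulton \cite{Ful} and stated as a black box, with the end-of-proof symbol attached to the statement itself. Measured against the literature you are implicitly reconstructing, your skeleton is the standard one: necessity via simultaneous Schubert positions of an $r$-plane relative to the eigenflags plus Ky Fan/Hersch--Zwahlen trace estimates, sufficiency via the identification of the eigenvalue cone with a moment/tensor-product cone, and the rigidity clause via the equality case of the variational bounds --- the last being precisely Fulton's route to the ``moreover'' part. As you concede, this is a roadmap rather than a proof, which is acceptable insofar as the paper itself treats the result as a citation; but two of your load-bearing assertions are off and would need repair even at the level of a sketch.

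First, the genericity slip in the necessity direction: the eigenflags of the given $A_i$ are fixed data, so you cannot invoke Kleiman-type genericity to produce a point in the intersection of the Schubert cells. What you actually need, and what the argument uses, is that a nonzero product of the Schubert \emph{classes} forces the corresponding closed Schubert \emph{varieties} to have nonempty intersection for arbitrary complete flags; genericity only buys transversality, which is irrelevant here. Relatedly, the bound for $A_0$ must run in the direction opposite to the bounds for $A_1,\ldots,A_m$, via the dual (complementary) Schubert condition attached to $J_0$ --- the conventions relating $\lambda(J)$ to the complementary partition are exactly where such arguments go wrong, so this cannot be waved through. Second, the saturation theorem of Knutson--Tao is not in fact indispensable for the formulation stated here. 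The inequalities are indexed by positivity of Littlewood--Richardson coefficients, which coincide with Schubert intersection numbers, and the eigenvalue problem is homogeneous in $\lambda$: Klyachko's theorem already shows these inequalities cut out the rational eigenvalue cone, and the passage from a nonzero invariant in $V_{N\lambda_0}^{\,*}\otimes V_{N\lambda_1}\otimes\cdots\otimes V_{N\lambda_m}$ for \emph{some} $N\geq 1$ back to Hermitian matrices costs nothing after rescaling, with real $\lambda$ handled by a compactness/limiting argument. Saturation becomes essential only for Horn's recursively defined list of inequalities, or for the representation-theoretic statement at $N=1$; attributing the ``real weight'' of this particular theorem to it misplaces the difficulty, which in this formulation lies in Klyachko's sufficiency argument itself.
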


\pausa
We shall refer to the inequalities in \eqref{comp ec} as
{\it Horn-Klyachko's compatibility inequalities}.

\section{On the existence of tight fusion frames.}

\pausa
The following facts exemplify the difference between the theory of vector frames 
and that of frames of subspaces. In \cite{[BF]} (see also \cite{casazza2} and  
\cite{MR}) it is shown that the local minimizers of the frame potential on the set 
$$
F^1_{m, \, n}=\{\cF=\{f_i\}_{i \in \IM}\ 
: \mbox{ each $f_i \in \cene$ \ \  
and } \  \tr(S_\cF)=\sum_{i \in \IM}\,  \|f_i\|^2=1\,\} 
$$ 
are tight frames. Since the set $F^1_{m, \, n}$ is compact and the frame potential 
is a continuous function, there must be global (and hence local) minima of the frame potential. 
This was used to give an indirect proof of the existence of such frames in the vectorial case.

\subsection{Dimensional restrictions}
Let $\mathbf d\in \N^m$ with $\tr \d\ge n$ and consider the set 
$\cS_{m ,\,  n}^1(\d) $ defined in Eq. \eqref{con tr1}. 
Using Remark \ref{de FS a vec} it follows that if $\d = \uno_m\,$, 
then we can identify $\cS_{m ,\,  n}^1(\d) $ with $F^1_{m, \, n}$, and the previous comments can be applied. 
Hence it seems natural to ask whether there always exist TFF's  
in $\cS_{m ,\,  n}^1(\d) $, since they would be all the global minimizers of 
the FF-potential on $\cS_{m ,\,  n}^1(\d) $. 
The following results show that in general the answer  is no.

\begin{pro}\label{restric}
Let $(\mathbf d,w)\in \N^m\times\RR_{>0}^m$ be a normalized pair, with $M = \tr \d \ge n$, 
 and assume that $\cW_w \in \cS_{m ,\,  n}^1(\d)$ is a TFF, 
 so that $S_{\FS} = \frac{1}{\,n} \ I_n \,$. If there exists $i\in \IM$ such that 
$$
M-d_i = \suml_{k\neq i}d_k\leq n-1 \ \implies  \ 
w_i^2 = \frac{1}{\,n} \peso{and}  P_{W_i} \, P_{W_j} =0  \peso{for every  \ \ $j\in \IM \setminus \{i\}$ .}
$$
\end{pro}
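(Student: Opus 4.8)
The plan is to isolate the $i$-th summand in the tight-frame identity and play a rank bound against positivity. Write
$S_{\FS} = w_i^2\, P_{W_i} + T$, where $T = \sum_{k \in \IM \setminus \{i\}} w_k^2\, P_{W_k} \in \matpos$. Since the rank of a sum of positive semidefinite matrices is at most the sum of the ranks, and $\operatorname{rk}(P_{W_k}) = \dim W_k = d_k$, we get $\operatorname{rk}(T) \le \sum_{k \neq i} d_k = M - d_i \le n-1$. Thus $T$ is singular; this is the only place the hypothesis $M - d_i \le n-1$ enters, and it is what the whole argument hinges on.

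Next I would feed in tightness. From $S_{\FS} = \frac1n\, I_n$ we obtain $T = \frac1n\, I_n - w_i^2\, P_{W_i}$. Because $P_{W_i}$ has eigenvalue $1$ on $W_i$ (multiplicity $d_i$) and $0$ on $W_i^\perp$ (multiplicity $n - d_i$), the matrix $T$ is diagonalized in the same way, with eigenvalue $\frac1n - w_i^2$ on $W_i$ and eigenvalue $\frac1n$ on $W_i^\perp$. Positivity $T \geq 0$ already forces $\frac1n - w_i^2 \ge 0$, i.e. $w_i^2 \le \frac1n$. The decisive (and essentially only nontrivial) step is then a rank count: if the inequality were strict, $w_i^2 < \frac1n$, both eigenvalues $\frac1n - w_i^2 > 0$ and $\frac1n > 0$ would be positive, so $T$ would be invertible with $\operatorname{rk}(T) = n$, contradicting $\operatorname{rk}(T) \le n-1$. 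Hence $w_i^2 = \frac1n$, which is the first assertion. Here one uses $d_i \ge 1$ (as $\mathbf d \in \N^m$) so that the complementary case $w_i^2 = \frac1n$, giving $\operatorname{rk}(T) = n - d_i \le n-1$, is indeed consistent.

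Finally, substituting $w_i^2 = \frac1n$ turns the identity into $T = \frac1n\,(I_n - P_{W_i}) = \frac1n\, P_{W_i^\perp}$, so $Tf = 0$ for every $f \in W_i$. Pairing with $f$ yields $\langle Tf, f\rangle = \sum_{k \neq i} w_k^2 \,\|P_{W_k} f\|^2 = 0$, and since every $w_k > 0$ this forces $P_{W_k} f = 0$ for all $k \neq i$ and all $f \in W_i = R(P_{W_i})$. Equivalently $P_{W_j} P_{W_i} = 0$ for $j \neq i$, and taking adjoints (projections are self-adjoint) gives $P_{W_i} P_{W_j} = 0$ for every $j \in \IM \setminus \{i\}$, completing the proof. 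I do not anticipate a genuine obstacle beyond correctly packaging the positivity-plus-rank dichotomy; the explicit spectral description of $T$ makes everything transparent.
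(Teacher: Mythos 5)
Your proof is correct, and it follows a genuinely different route from the paper's. The paper does not argue on the frame operator directly: it passes to the associated vector frame of Remark \ref{de FS a vec}, forms its $M\times M$ Gramm matrix $G$ (whose spectrum is $\frac{1}{n}$ with multiplicity $n$ and $0$ with multiplicity $M-n$), and applies Cauchy's interlacing principle to the principal submatrix $G_i = w_i^2\, I_{d_i}$: the hypothesis $M-d_i\le n-1$ gives $M-d_i+1\le n$, whence $\frac{1}{n}=\lambda_{M-d_i+1}(G)\le w_i^2\le \lambda_1(G)=\frac{1}{n}$; the orthogonality assertion is then quoted as a known fact about vectors in a tight frame whose norm attains the frame bound. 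You instead stay entirely at the operator level: the decomposition $S_{\cW_w}=w_i^2 P_{W_i}+T$ with $T=\frac{1}{n}\,I_n-w_i^2 P_{W_i}$, rank subadditivity $\operatorname{rk} T\le \sum_{k\ne i} d_k = M-d_i\le n-1$, and the explicit spectrum of $T$ (eigenvalue $\frac{1}{n}-w_i^2$ on $W_i$, eigenvalue $\frac{1}{n}$ on $W_i^\perp$) force $w_i^2=\frac{1}{n}$, after which $T=\frac{1}{n}\,P_{W_i^\perp}$ and the positivity of each summand together with $w_k>0$ yields $P_{W_k}f=0$ for all $f\in W_i$, hence $P_{W_i}P_{W_j}=0$ by taking adjoints. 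Your argument is more elementary and self-contained: it avoids the detour through the vector frame and proves the orthogonality statement rather than citing it; what the paper's approach buys is a uniform use of the vector-frame dictionary that recurs throughout the paper, and interlacing is a sharper tool when one wants information about intermediate eigenvalues of principal submatrices rather than just the extreme ones. One small streamlining you might note: positivity of $T$ is not actually needed for the first conclusion, since $w_i^2\ne\frac{1}{n}$ already makes \emph{both} eigenvalues of $T$ nonzero and hence $\operatorname{rk} T=n$, so your positivity-plus-rank dichotomy collapses into a single rank count (positivity is, of course, still essential in the final step where you kill $P_{W_k}|_{W_i}$).
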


\begin{proof} 
Consider the tight vector frame $\cF=\{w_i^2\,e_j^{(i)}:\ i \in \IM \, ,\  j\in\IN{ d_i}\}$ 
associated to $\FS\,$, as described  in Remark \ref{de FS a vec}. 
Let $G \in \matrec{M}^+$ denote 
the Gramm matrix of the vector frame $\cF$ and let $G_i = w_i^2 \, I_{d_i}$ 
 denote the Gramm matrix of each subsequence  $\{w_i^2\,e_j^{(i)}:\  j\in\IN{ d_i}\}$.
 Then each  $G_i$ is a $d_i\times d_i$ principal sub-matrix of $G$.
By Cauchy's interlacing principle \cite{Ba} we  get: 
$$  
\la_j(G)\geq \la_j(G_i)\geq \la_{M-d_i+j}(G) \ \peso{for}  1\leq j\leq d_i \ ,
$$ 
where $\lambda(G)=(\lambda_j(G)\,)_{j\in \IN{M}}$ (resp $\lambda(G_i)\in \RR^{d_i}$) denotes the 
vector of eigenvalues of $G$ (resp. $G_i$) 
counting multiplicities and with its entries arranged in non-increasing order. 
By assumption, 
$$
\peso{$\lambda_j(G)=\frac{1}{\,n}$  \ \ for $j \in \In$  \ ,   \ \ $\lambda_j(G)=0$ 
 \ \ for  \ \ $n<j\leq M$ , \ and  \ $\lambda_j(G_i)=w_i^2$  \ for \ $j\in \IN{d_i}$ .}
$$ 
Thus if $\suml_{k\neq i}d_k   \leq n-1$,  then $M-d_i+1 \le n$ and 
$\frac {1}{\,n} = \la_{M-d_i+1} (G)  \le \lambda_1(G_i) = w_i^2 \le \lambda_1(G) = \frac{1}{\,n}\ $. 
It is known that, in this case,  each of the vectors $e_j^{(i)}$, $1\leq j\leq d_i\, $, 
must be orthogonal to every other vector in the $\frac{1}{\,n}$ -tight vector frame $\cF$, which 
implies the last assertion of the theorem.
\end{proof}

\begin{exa}[About the existence of tight frames in 
$\cS_{m ,\,  n}^1(\d)$]\label{existencia de tights} Consider now $\mathbf d=(2,2)$ and assume that there exists 
$\cW_w\in \cS_{2,3}^1(\mathbf d)$ that is tight. That is, we assume that there exist 
two subspaces $W_i\subset \CC^3$ with $\dim W_i=2$, $i=1,2$ and 
$w_1\, ,\,w_2 \in \R_{>0}$ such that $\frac{1}{3}\, I_3=w_1^2\,P_{W_1}+w_2^2\,P_{W_2}\,$. Since 
$d_1\, , \, d_2\leq 3-1$ we conclude from  \prop{restric} that $w_1^2 = w_2^2 =\frac{1}{3}\,$,  
and $P_{W_1}\,P_{W_2}=0$, 
which is impossible. 
This argument can be extended to show that if the 
choices of $\mathbf d=(d_i)_{i \in \IM}\, $ are such that each $d_i$ is relatively 
small compared with $n$ and $\suml_{k\neq i}d_k$ then there are no TFF's  
in $\cS_{m, \, n}^1(\mathbf d)$. For example, in  
$\cS_{k,\, 2k-1}^1(2 \cdot \uno _k), \  \cS_{3,\, 7}^1(3,3,3) , \   \cS_{3,\, 9}^1(4,4,4)$, etc, 
there are no TFF's.    \EOE 
\end{exa}

\begin{rem}\rm 
The previous results show some dimensional restrictions for the 
existence of TFF's in $\cS_{m ,\,  n}^1(\d)$. 
In the paper by Casazza and Fickus \cite{CF}, some sufficient conditions on $n$, $m$  and $\d$ 
are given (particularly if $\d$ is a multiple of $\uno_m$), 
which assure the existence in $\cS_{m ,\,  n}^1(\d)$  of such fusion frames. 
For further results in this direction, see also \cite{GK}. \EOE
\end{rem}

\subsection{Characterizations for fixed weights}
The following theorem gives us some general bounds for $\mathbf P_q(\cdot)$ and states several conditions on  $\FS \in \cB_{m, \, n}(\mathbf d)$ which 
are equivalent to the assertion that $\FS$ is a $\frac{1}{\,n}\,$-TFF. 
\begin{teo}\label{propopot}
Let $\cW_w \in \cB_{m, \, n}(\mathbf d)$, 
where $\suml_{i \in \IM}\, w_i^2\,d_i \geq 1$. 
Then 
\begin{equation}\label{hora de comer}
\frac{1}{n^2}\,I\prec_w \mathbf P_q(\cW_w) \ .
\end{equation}
For every u.i.n. $\|\cdot\|$ on $\matrec{n}$ with associated symmetric gauge
function $\psi$ we have that 
\begin{equation}\label{desiuin} \frac{1}{n^2}\  \psi( \uno)\leq \|\mathbf
P_q(\cW_w)\| \ .
\end{equation}
For every increasing convex function $f:\RR_{\geq 0}\rightarrow
\RR$ with $f(0)=0$ we have \begin{equation}\label{ultimomomento}
n\cdot f\big( \, \frac{1}{n^2} \, \big)\leq \tr\, f(\mathbf P_q(\cW_w)\, ) \ .
\end{equation}
Finally, the following conditions are equivalent: 
\ben
\item $\cW_w$ is  a $\frac{1}{\,n}\,$-TFF.
\item Majorization holds in \eqref{hora de comer}.  
\item There exists u.i.n. $\|\cdot\|$ such that equality holds in \eqref{desiuin} 
\item There exists an increasing  strictly convex function $f:\RR_{\geq 0}
\rightarrow \RR_{\geq 0}$ with $f(0)=0$ such that equality holds in
\eqref{ultimomomento}.
\een 
\end{teo}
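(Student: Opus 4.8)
The backbone of the statement is the submajorization \eqref{hora de comer}; once it is in place, the two numerical inequalities \eqref{desiuin} and \eqref{ultimomomento} fall out at once. The plan is to first compute $\tr \mathbf P_q(\cW_w)$. Using the cyclicity of the trace and $P_{W_j}^2=P_{W_j}$ one gets $\tr \mathbf P_q(\cW_w)=\suml_{i,j} w_i^2w_j^2\,\tr(P_{W_i}P_{W_j})=\tr S_{\cW_w}^{\,2}$. Since $S_{\cW_w}\in\matpos$ acts on an $n$-dimensional space, Cauchy--Schwarz gives $\tr S_{\cW_w}^{\,2}\ge \frac1n (\tr S_{\cW_w})^2\ge \frac1n$, the last step because by hypothesis $\tr S_{\cW_w}=\suml_{i}w_i^2 d_i\ge 1$. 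Hence $\tr \mathbf P_q(\cW_w)\ge \frac 1n$, and applying Example \ref{ejem} to the nonnegative vector $\la(\mathbf P_q(\cW_w))$ with $a=\frac1n\le \tr \mathbf P_q(\cW_w)$ yields $\frac{1}{n^2}\,\uno_n=\frac an\,\uno_n\prec_w \la(\mathbf P_q(\cW_w))$, which is exactly \eqref{hora de comer}. From here \eqref{desiuin} and \eqref{ultimomomento} are read off from Theorem \ref{props submayo y Klyachko}: item 2 gives $\|\frac{1}{n^2}I\|\le\|\mathbf P_q(\cW_w)\|$ for every u.i.n., with $\|\frac{1}{n^2}I\|=\psi(\frac{1}{n^2}\uno)=\frac{1}{n^2}\psi(\uno)$ by homogeneity of the gauge, and item 3 gives $\tr f(\frac{1}{n^2}I)=n\,f(\frac{1}{n^2})\le \tr f(\mathbf P_q(\cW_w))$ for increasing convex $f$ (both matrices being positive, only the values of $f$ on $\RR_{\ge 0}$ intervene, so requiring $f(0)=0$ is harmless).

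For the equivalences the key algebraic observation is the identity $\mathbf P_q(\cW_w)=\suml_{j}w_j^2\,P_{W_j}\,S_{\cW_w}\,P_{W_j}$, obtained by summing the inner index $i$ first. If $\cW_w$ is a $\frac1n$-TFF, then $S_{\cW_w}=\frac1n I$ forces $\mathbf P_q(\cW_w)=\frac1n\suml_j w_j^2 P_{W_j}=\frac1n S_{\cW_w}=\frac{1}{n^2}I$; consequently all three inequalities become equalities, so (1) implies (2), (3) and (4) simultaneously. The converses are where the actual work lies, and I would organize them as (2)$\Rightarrow$(1), (4)$\Rightarrow$(1) and (3)$\Rightarrow$(2)$\Rightarrow$(1).

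For (2)$\Rightarrow$(1), majorization in \eqref{hora de comer} forces equal traces, $\tr\mathbf P_q(\cW_w)=\tr(\frac{1}{n^2}I)=\frac1n$, i.e. $\tr S_{\cW_w}^{\,2}=\frac1n$; then the chain $\frac1n=\tr S_{\cW_w}^{\,2}\ge\frac1n(\tr S_{\cW_w})^2\ge\frac1n$ must be all equalities, which forces $\tr S_{\cW_w}=1$ together with the Cauchy--Schwarz equality case $S_{\cW_w}=cI$, whence $S_{\cW_w}=\frac1n I$. For (4)$\Rightarrow$(1), equality in \eqref{ultimomomento} for a strictly convex increasing $f$ together with $\frac{1}{n^2}I\prec_w\mathbf P_q(\cW_w)$ lets me invoke the ``moreover'' clause of Theorem \ref{props submayo y Klyachko}, producing $U\in\cU(n)$ with $\frac{1}{n^2}I=U^*\mathbf P_q(\cW_w)\,U$; hence $\mathbf P_q(\cW_w)=\frac{1}{n^2}I$, again giving $\tr S_{\cW_w}^{\,2}=\frac1n$ and closing as before.

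I expect (3)$\Rightarrow$(1) to be the main obstacle, because equality in a single unitarily invariant norm does not, by majorization alone, pin down $\mathbf P_q(\cW_w)$ (for instance equality in the trace norm leaves the spectrum unconstrained beyond its sum). The device I would use is the elementary gauge estimate $\psi(y)\ge \frac{\tr y}{n}\,\psi(\uno)$, valid for every $y\in\RR_{\ge 0}^n$: it holds because the flat vector $\frac{\tr y}{n}\,\uno$ is majorized by $y$ and symmetric gauge functions are monotone for majorization. Applying it to $y=\la(\mathbf P_q(\cW_w))$, equality in \eqref{desiuin} reads $\frac{1}{n^2}\psi(\uno)=\psi(y)\ge \frac{\tr y}{n}\,\psi(\uno)$, and since $\psi(\uno)>0$ this gives $\tr y\le \frac1n$; as \eqref{hora de comer} already yields $\tr y\ge \frac1n$, we conclude $\tr\mathbf P_q(\cW_w)=\frac1n$, which is precisely condition (2). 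Thus (3) reduces to the already-settled case, completing the cycle of equivalences.
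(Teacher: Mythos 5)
Your proof is correct and follows essentially the same route as the paper: the trace bound $\tr \mathbf P_q(\cW_w)\ge \frac1n$ combined with Example \ref{ejem} gives \eqref{hora de comer}, Theorem \ref{props submayo y Klyachko} yields \eqref{desiuin} and \eqref{ultimomomento}, and every converse is reduced to the trace equality $\tr S_{\cW_w}^{\,2}=\frac1n$ — your gauge estimate $\psi(y)\ge \frac{\tr y}{n}\,\psi(\uno)$ is exactly the paper's device for handling item 3. The only cosmetic differences are that you close the rigidity step via the Cauchy--Schwarz equality case where the paper invokes the strict-convexity clause of Theorem \ref{props submayo y Klyachko} with $f(x)=x^2$, and your identity $\mathbf P_q(\cW_w)=\suml_j w_j^2\,P_{W_j}S_{\cW_w}P_{W_j}=\frac{1}{n^2}\,I_n$ for a $\frac1n$-TFF silently corrects the paper's misprint ``$\mathbf P_q(\cW_w)=\frac1n\,I_n$''.
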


\begin{proof}

Since $\tr(S_{\cW_w})=\suml_{i \in \IM}\,  w_i^2 \,d_i\geq 1$ then (see Example \ref{ejem}) 
it follows that $\frac{1}{\,n}\,I_n\prec_w S_{\cW_w}$ and hence 
\begin{equation}\label{tempranillo} \frac{1}{\,n}=\tr \, (\frac{1}{\,n}\,I_n)^2 \leq \tr S_{\cW_w}^2 =
\tr \mathbf P_q(\cW_w) \ \ 
\implies \ \ \frac{1}{n^2}I_n\prec_w \mathbf P_q(\FS) \ .
\end{equation}Notice that by Theorem \ref{props submayo y Klyachko} then \eqref{desiuin} and \eqref{ultimomomento}
 are consequences of this last fact.
Assume that majorization holds in \eqref{hora de comer}, so then we
have 
$$\tr \, \frac{1}{n^2} \, I_n =\tr \, \mathbf P_q(\cW_w)=\tr S_{\cW_w}^2  \ .
$$ 
Since $\frac{1}{\,n}I_n\prec_w S_{\cW_w}$ and
the function $f(x)=x^2$ is strictly convex, by Theorem \ref{props submayo y Klyachko} we conclude that there exists a unitary $U\in \cU(n)$ such
that $S_{\cW_w}=U^*(\frac{1}{\,n}\,I_n)U=\frac{1}{\,n}\,I_n\, $. On the
other hand, if there exists an u.i.n. $\|\cdot\|$ such that equality
holds in \eqref{desiuin} then, using the right-hand side of
\eqref{tempranillo} we get
 \begin{equation}
 \frac{1}{n^2}\ \psi(\uno)=\|\mathbf P_q(\cW_w)\|\geq \frac{\tr \, \mathbf P_q(\cW_w) }{n} 
 \ \psi(\uno)\geq   \frac{1}{n^2}\ \psi(\uno) \ , 
 \end{equation}
 which implies that $\tr \, (\frac{1}{\,n}\,I_n)^2 =\tr \, \mathbf  P_q(\cW_w)$. 
 As before, we conclude that $S_{\cW_w}=\frac{1}{\,n}\,I_n \, $. Similarly, if there exists an increasing  strictly convex function $f:\RR_{\geq
0}\rightarrow \RR_{\geq 0}$ with $f(0)=0$ such that equality holds in
\eqref{ultimomomento} then Theorem \ref{props submayo y Klyachko} and the right-hand side of
\eqref{tempranillo} imply that $S_{\cW_w}=\frac{1}{\,n}\,I_n\,$.  Finally, it is clear that in case $\cW_w$ is a TFF
then $\mathbf P_q(\cW_w)=\frac{1}{\,n}\ I_n \, $. The last part of the theorem follows from this fact.
 \end{proof}

\begin{teo}\label{con suf y nec}
Let $(\mathbf d,w)\in \N^m\times\RR_{>0}^m$ be a normalized pair.
Then, the following statements are equivalent:
\begin{enumerate}
\item\label{item 1} There exists $\cW_w=
(w_i \, ,W_i)_{i \in \IM} \in \cS_{m, \, n}^1(\mathbf d)$ which is a $\frac{1}{\, n}\,$-TFF. 
 \item\label{item 3} For every $1\leq r\leq n-1$ and every
$(J_0,\ldots,J_m)\in LR_r^{\,n}(m)$ we have that
\begin{equation*}\label{la ec posta}
\frac{r}{n}\leq \suml_{i \in \, \IM}\,  w_i^2\cdot |\,J_i\cap \{1,\ldots, d_i\} \,|.
\end{equation*}
\item\label{item 4} There exists an orthogonal projection $P\in \cM_m(\matrec{n} \, )$
 with $\tr(P)=n$ and such that, if $\frac{1}{\,n}\,P=(w_i\,w_j\,P_{ij})_{i,j\in \,\IM}$ with 
 $P_{ij}\in \matrec{n}$  for $i,j\in \IM\,$, then
 $$
 \frac{1}{w_i^2\,n}\ P_{ii}=\left( \, \frac{1}{w_i^2\,n}\, P_{ii}\, \right)^* =
 \left( \, \frac{1}{w_i^2\,n}\, P_{ii}\, \right) ^2 
 \ \text{ and } \tr\left( \, \frac{1}{w_i^2\,n}\,P_{ii} \, \right) = 
 d_i \ \text{ for every } \ i \in \IM \ \ .
 $$
\end{enumerate}
\end{teo}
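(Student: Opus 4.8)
The plan is to treat the two nontrivial equivalences separately: \ref{item 1} $\Leftrightarrow$ \ref{item 3} is a direct translation of the Klyachko--Fulton theorem, while \ref{item 1} $\Leftrightarrow$ \ref{item 4} rests on the elementary correspondence between tight fusion frames and range projections of isometries.

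For \ref{item 1} $\Leftrightarrow$ \ref{item 3}, the key observation is that $\cW_w$ is a $\frac1n$-TFF if and only if $\frac1n I_n=\sum_{i\in\IM}w_i^2P_{W_i}$, i.e.\ if and only if the hermitian matrix $A_0=\frac1n I_n$ is a sum of hermitian matrices $A_i:=w_i^2P_{W_i}$ with prescribed eigenvalue lists $\lambda(A_0)=\frac1n\,\uno_n$ and $\lambda(A_i)=(w_i^2,\dots,w_i^2,0,\dots,0)$, with $w_i^2$ repeated $d_i$ times. So I would apply Theorem \ref{teoK} to these data. Its trace (equality) hypothesis reads $n\cdot\frac1n=\sum_{i\in\IM}d_i\,w_i^2$, which is exactly the normalization of $(\mathbf d,w)$ and therefore holds automatically. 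For the compatibility inequalities, a direct computation gives $\sum_{j\in J_0}\lambda_j^{(0)}=\frac{|J_0|}{n}=\frac rn$ and $\sum_{j\in J_i}\lambda_j^{(i)}=w_i^2\,|J_i\cap\IN{d_i}|$, so that \eqref{comp ec} becomes precisely the inequality displayed in \ref{item 3}. Finally I would note that the index $r=n$ can be discarded: the unique tuple in $LR_n^{\,n}(m)$ has all $J_i=\In$, and the corresponding inequality degenerates into the trace identity already granted by normalization; this is exactly why the statement only quantifies over $1\le r\le n-1$.

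For \ref{item 1} $\Leftrightarrow$ \ref{item 4}, I would use the operator $V\colon\CC^n\to(\CC^n)^m$ given by $Vf=(w_iP_{W_i}f)_{i\in\IM}$, whose adjoint satisfies $V^*V=\sum_{i\in\IM}w_i^2P_{W_i}=\frs$. Thus $\cW_w$ is a $\frac1n$-TFF exactly when $V^*V=\frac1n I_n$, i.e.\ when $\sqrt n\,V$ is an isometry, in which case $P:=n\,VV^*$ is the orthogonal projection onto $R(V)$; it lies in $\cM_m(\matrec n)$, has $\tr P=n\,\tr(V^*V)=n$, and its $(i,j)$ block is $n\,w_iw_j\,P_{W_i}P_{W_j}$. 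Reading off the diagonal blocks of $\frac1n P$ then recovers the orthogonal projections $P_{W_i}$ themselves, which are self-adjoint idempotents with $\tr P_{W_i}=d_i$, i.e.\ the data required in \ref{item 4}. For the converse I would factor a given $P$ as $P=\Phi\Phi^*$ with $\Phi\colon\CC^n\to(\CC^n)^m$ an isometry, write $\Phi=(\Phi_i)_{i\in\IM}$ in blocks, and use the diagonal condition of \ref{item 4} to see that each $\Phi_i$ equals $\sqrt n\,w_i$ times a partial isometry of rank $d_i$. The isometry identity $I_n=\Phi^*\Phi=\sum_{i\in\IM}\Phi_i^*\Phi_i=\sum_{i\in\IM}n\,w_i^2\,E_i$, with $E_i$ the rank-$d_i$ initial projection of that partial isometry, then exhibits the subspaces $W_i:=R(E_i)$ as a $\frac1n$-TFF with the prescribed dimensions and weights.

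The routine-but-delicate part will be the second equivalence. One must keep careful track of the normalizing scalars relating $P$, $\frac1n P$ and the blocks $P_{ij}$, and, in the converse direction, notice that the diagonal blocks of $P$ control the \emph{final} spaces of the partial isometries $\Phi_i$, whereas tightness is produced by their \emph{initial} spaces $R(E_i)$. Since only the existence of \emph{some} TFF with parameters $(\mathbf d,w)$ is claimed, and $\mathrm{rank}\,E_i=\mathrm{rank}(\Phi_i\Phi_i^*)=d_i$, this discrepancy causes no real difficulty. By contrast, the first equivalence is essentially a dictionary entry once the eigenvalue lists are identified, the only genuine point being the reduction of the range of $r$ explained above.
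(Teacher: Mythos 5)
Your proof is correct and takes essentially the same route as the paper: the first equivalence is the identical Klyachko--Fulton dictionary (your explicit reduction of the case $r=n$ to the trace/normalization identity is a detail the paper leaves implicit), and the second is the paper's dilation argument, with the only difference that you build $V$ from the analysis operator $Vf=(w_i\,P_{W_i}f)_{i\in\IM}$ where the paper stacks arbitrarily chosen partial isometries $V_i$ with $V_i^*V_i=P_{W_i}$ --- a harmless, slightly more canonical specialization (your choice amounts to $V_i=P_{W_i}$), and your remark distinguishing the final projections $\Phi_i\Phi_i^*$ from the initial projections $E_i=U_i^*U_i$ in the converse matches the paper's own handling. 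Note that both your construction and the paper's verify the condition of the third item only under the reading in which $P_{ii}$ denotes the $(i,i)$ block of $P$ itself (the convention used in the paper's proof), rather than the block of $\frac{1}{n}\,P$ divided by $w_iw_j$ as the statement's displayed normalization literally says --- a notational glitch of the paper, not of your argument.
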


\begin{proof}
Notice that the condition in \eqref{item 1} is equivalent to the existence of orthogonal 
projections $\{P_i\}_{i \in \IM}\, $ such that $\tr(P_i)=d_i$ for $1\leq i\leq m$ and 
such that $\suml_{i \in \IM}\,  w_i^2\,P_i=\frac{1}{\,n}\,I$. Hence, by Theorem \ref{teoK} 
it follows that condition 
\eqref{item 3} should hold, since these are Horn-Klyachko's compatibility 
inequalities for the spectra of $\{w_i^2\,P_i\}_{i \in \IM}\, $ and $\frac{1}{\,n}\,I_n\, $. 
The converse of the previous implication also follows from Theorem \ref{teoK} since a 
self adjoint $A$ operator with  $\lambda(A)=(\alpha,\ldots,\alpha,0,\ldots,0)\in \RR^n$ 
is necessarily of the form $A = \alpha\,P$ for some projection $P\in \matrec{n}$.

Assume now \eqref{item 1} and let $P_i=P_{W_i}$ for $1\leq i\leq m$, where $\cW_w=(w_i,W_i)_{i \in \IM}\, $ 
is a $\frac{1}{\,n}$-TFF. Let us consider $V_i\in \mat$ a partial isometry such that $V_i^*V_i=P_i$, 
$1\leq i\leq m$. Define $V^*=[w_1V_1^*|w_2V_2^*|\cdots|w_mV_m^*]\in \matrec{n,m\cdot n}$ and notice that $V^*V=\suml_{i \in \IM}\, w_i^2V_i^*V_i=\frac{1}{\,n}\, I_n$. Hence $VV^*=(w_iw_jV_iV_j^*)_{i,\,j=1}^m\in \cM_{m}(\matrec{n}\, )$ is such that $VV^*=\frac{1}{\,n}\,P$ for an orthogonal projection $P=(P_{ij})_{i,\,j}^m\in \cM_{m}(\matrec{n}\, )$. By comparing the diagonal blocks we get that
$$ 
\frac{1}{\,n}\, P_{ii}=w_i^2\,V_iV_i^* \ \ \implies \  \frac{1}{w_i^2\,n}\,P_{ii}=\,V_iV_i^* \ , \peso{for every}
i \in \IM \ \  .
$$
Conversely, if $P=(P_{ij})\in \cM_{m}(\matrec{n}\, )$ is as in \eqref{item 4} then there exist matrices 
$V_i\in \matrec{n}$ such that if $V^*=[w_1V_1^*|\cdots|w_mV_m]\in \matrec{n,m\cdot n}$ 
then $P=VV^*$ (since rank $ P=n$). But then, comparing the block diagonal entries we get that 
$\frac{1}{\,n} \, V_i^*V_i$ is an orthogonal projection with $\tr(\frac{1}{\,n} \, V_i^*V_i)=d_i$
for $1\leq i\leq m$. Hence $V^*V=I_n \,$, that is 
$$
\suml_{i \in \IM}\,  w_i^2\, \frac{1}{\,n} \, V_i^*V_i =\frac{1}{\,n}\,I_n \ \ .
$$ 
Therefore, if we let $W_i$ be the range of $V_i^*$ we get that $P_{W_i}=\frac{1}{\,n} \, V_i^*V_i$ so then $\cW=(w_i,W_i)_{i \in \IM}\, $ is a $\frac{1}{\,n}$ -TFF for $\CC^n$. 
\end{proof}

\section{Minimization for fixed weights}

\subsection{Lower bound for the potential}
In this subsection we translate, using Remark \ref{de FS a vec}, some well known 
results about vector 
frames (see \cite{casazza2} or \cite{RS}) to the FF context. An interesting fact is that 
there is  a notion of irregularity, defined in terms of the parameters of a given FF, which 
agree with the vectorial $n$-irregularity of their associated vector frames. 
Nevertheless, the lower bound obtained for the FF-potential is not always 
attained in the set $\cB_{m, \, n}(\mathbf d,w)$ (see Example \ref{noattine}) . 

\begin{fed} \rm Given a pair 
$\mathbf d\in  \N^m$ and $w = w^\downarrow\in \RR_{>0}^m\,$, 
 consider its {\it q-irregularity} defined as
$$
J_0(\d , w)=\max \ \left\{\, j \in \IM \ :\ 
\big( n-\suml_{i=1}^j d_i \big)\,w_j^2 > \suml_{i=j+1}^m w_i^2\,d_i \right\} \ ,
$$ 
if this set is not empty, otherwise $J_0(\d , w)=0$.
\end{fed}

\begin{pro}\label{la irregularidad fusion} 
Let $(\mathbf d,w)\in \N^m\times\RR_{>0}^m$ be a normalized pair, where $w=w^\downarrow$.
Recall that $M = \suml_{i \in \IM}\, d_i\geq n$. Let 
$
j_0:=J_0(\d , w) $ and $c=
\frac{\suml_{i=j_0+1}^m w_i^2\,d_i}{n-\suml_{i=1}^{j_0} d_i} \ < w_{j_0}^2 \ .
$ 
If $\cW_w \in \cB_{m, \, n}(\mathbf d,w)$  then 
\begin{equation}\label{en el IAM}
\FP(\cW_w)\geq \sum_{i=1}^{j_0} d_i\, w_i^4+(n-\sum_{i=j_0+1}^{m}d_i) \ c^2.
\end{equation}
Moreover, equality holds in \eqref{en el IAM} if and only if the following two conditions hold:
\ben 
\item $P_{W_i}\,P_{W_j}=0$ for 
$1\leq i\neq j\leq j_0$ and 
\item $\{w_i \, , W_i\}_{i=j_0+1}^m$ is a TFF for 
\rm $ \text{span}\{W_i:\ 1\leq i\leq j_0\}^\perp$. \it 
\een
\end{pro}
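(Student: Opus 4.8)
The plan is to pass to the associated vector Bessel sequence and run a majorization argument on the spectrum of the frame operator. By Remark \ref{de FS a vec}, choosing orthonormal bases of the $W_i$ produces a vector Bessel sequence $\F=\{w_i\,e^{(i)}_j\}$ with $S:=S_{\cW_w}=S_\F$ and $\FP(\cW_w)=\tr S^2=\sum_{k=1}^n\lambda_k(S)^2$. Its squared norms, arranged in non-increasing order (using $w=w^\downarrow$), form the vector $a\in\RR^M$ that repeats $w_i^2$ exactly $d_i$ times, where $M=\tr\mathbf d$. Writing $F\in\matrec{n,M}$ for the synthesis matrix of $\F$ and $G=F^*F$ for its Gram matrix, the nonzero spectra of $G$ and $S$ coincide, so $\lambda(G)=(\lambda(S),0^{M-n})$ and $\mathrm{diag}(G)=a$; the Schur--Horn theorem (see \cite{Ba}) then yields $a\prec(\lambda(S),0^{M-n})$, i.e. $\sum_{i=1}^k a_i\le\sum_{i=1}^k\lambda_i(S)$ for every $k$.

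Next I would compare $\lambda(S)$ with the candidate spectrum $\lambda^*\in\RR^n$ whose first $p:=\sum_{i=1}^{j_0}d_i$ entries are $w_1^2,\dots,w_{j_0}^2$ (with multiplicities $d_1,\dots,d_{j_0}$) and whose last $n-p$ entries all equal $c$. Since $c<w_{j_0}^2$, the vector $\lambda^*$ is non-increasing, and $\sum_k\lambda^*_k=\sum_{i\le j_0}d_iw_i^2+(n-p)c=1=\tr S$. The crux is the majorization $\lambda^*\prec\lambda(S)$. For $1\le k\le p$ it is immediate: $\sum_{i=1}^k\lambda^*_i=\sum_{i=1}^k a_i\le\sum_{i=1}^k\lambda_i(S)$. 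For $p<k\le n$ the plain Schur--Horn bound is too weak, and I would instead estimate the tail: the case $k=p$ gives $\sum_{i=p+1}^n\lambda_i(S)=1-\sum_{i=1}^p\lambda_i(S)\le 1-\sum_{i\le j_0}d_iw_i^2=(n-p)c$, so the $n-p$ smallest eigenvalues of $S$ have average at most $c$; since $\lambda_{k+1}(S),\dots,\lambda_n(S)$ are the smallest among these, their average is also $\le c$, whence $\sum_{i=k+1}^n\lambda_i(S)\le(n-k)c=\sum_{i=k+1}^n\lambda^*_i$, which by equality of the total sums is the desired prefix inequality. Here I would record that the maximality of $j_0$ forces $w_{j_0+1}^2\le c$, which is what places the tail values below $c$. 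Granting $\lambda^*\prec\lambda(S)$, the majorization inequality for the convex function $t\mapsto t^2$ (Theorem \ref{props submayo y Klyachko}, or \cite{Ba}) gives
\[
\FP(\cW_w)=\sum_{k=1}^n\lambda_k(S)^2\ \ge\ \sum_{k=1}^n(\lambda^*_k)^2=\sum_{i=1}^{j_0}d_i\,w_i^4+\Big(n-\sum_{i=1}^{j_0}d_i\Big)c^2 ,
\]
which is \eqref{en el IAM}.

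For the equality statement, ($\Leftarrow$) is a direct computation: if $W_1,\dots,W_{j_0}$ are mutually orthogonal and $\{w_i,W_i\}_{i>j_0}$ is a TFF with constant $c$ for $V:=\mathrm{span}\{W_i:i\le j_0\}^\perp$, then $S$ is the orthogonal direct sum of $\sum_{i\le j_0}w_i^2P_{W_i}$ and $c\,I_V$, whose spectrum is exactly $\lambda^*$, so equality holds. For ($\Rightarrow$), the strict convexity of $t\mapsto t^2$ forces $\lambda(S)=\lambda^*$. Then the $p$ largest eigenvalues of $G$ and the $p$ largest diagonal entries of $G$ both sum to $\sum_{i\le j_0}d_iw_i^2$, i.e. equality occurs at step $k=p$ of Schur--Horn. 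Because $\lambda_p(S)=w_{j_0}^2>c=\lambda_{p+1}(S)$ there is a spectral gap, so the subspace spanned by the first $p$ coordinates is a Ky Fan maximizer and hence $G$-invariant; this splits $G=G_1\oplus G_2$ along the first $p$ frame vectors. The block $G_1$ then has spectrum equal to its diagonal and is therefore diagonal, which says the vectors of the first $j_0$ blocks are pairwise orthogonal --- giving condition (1) --- and orthogonal to every remaining vector. Hence $\{W_i\}_{i>j_0}\subseteq V$, and matching the spectra forces $S|_V=c\,I_V$, i.e. condition (2).

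The main obstacle is twofold. In the lower bound, the range $p<k\le n$ is the only place where termwise Schur--Horn fails and must be replaced by the complementary ``smallest-eigenvalues'' estimate together with the inequality $w_{j_0+1}^2\le c$. In the equality case, the delicate point is to convert equality in a single prefix sum of the Schur--Horn majorization into a genuine orthogonal splitting of $G$; this rests on the equality analysis of Ky Fan's maximum principle, where the spectral gap at $p$ is essential, and is where I would be most careful.
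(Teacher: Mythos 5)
Your proof is correct, but it takes a genuinely different route from the paper's. Both arguments begin the same way --- pass to the associated vector Bessel sequence of Remark \ref{de FS a vec} and note that the ordered vector of squared norms repeats $w_i^2$ exactly $d_i$ times, so that the relevant vector irregularity equals $p=\sum_{i=1}^{j_0}d_i$ --- but at that point the paper simply invokes the known vector-frame result \cite[Theorem 10]{casazza2}, for both the lower bound and the equality characterization, and stops. You instead reprove that result from scratch: Schur--Horn majorization of the diagonal of the Gram matrix by its spectrum gives the prefix bounds for $k\le p$; the trace identity at $k=p$ plus the averaging observation (the $n-k$ smallest among the $n-p$ smallest eigenvalues have average at most $c$) handles $p<k\le n$, so that the inequality $w_{j_0+1}^2\le c$ you record is in fact never used in your chain; strict convexity of $t\mapsto t^2$ forces $\lambda(S_{\cW_w})=\lambda^*$ at equality; and the Ky Fan equality analysis, legitimized by the spectral gap $w_{j_0}^2>c$, yields the orthogonal splitting $G=G_1\oplus G_2$, after which the standard ``spectrum equals diagonal implies diagonal'' trick gives condition (1) and the spectral bookkeeping on $V$ gives condition (2). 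What the paper's citation buys is brevity; what your argument buys is a self-contained proof that makes the extremal spectrum $\lambda^*$ and the mechanism behind the equality case explicit, using only the majorization machinery (Theorem \ref{props submayo y Klyachko}) already set up in Section 2. One point of record: the bound you prove carries the coefficient $n-\sum_{i=1}^{j_0}d_i$ in front of $c^2$, and this is the correct statement; the index range $\sum_{i=j_0+1}^{m}d_i$ printed in \eqref{en el IAM} is a typo (for $j_0=0$ it would give the nonpositive coefficient $n-M$, contradicting the standard bound $\FP(\cW_w)\ge 1/n$), and your version is the one consistent with the equality conditions, with the value of $\tr S_{\cW_w}=1$, and with \cite{casazza2}.
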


\begin{proof}
1. 
Let $\cW_w \in \cB_{m, \, n}(\mathbf d,w)$ and let  $\cF=\{w_i^2\,e_j^{(i)}:
\ i \in \IM \, ,\  j\in\IN{ d_i}\}$  be  
an associated vector frame, as described  in Remark \ref{de FS a vec}. 
Let $\mathbf a\in \R^{M}$ denote the vector whose coordinates are the norms of the 
elements of $\cF$ arranged in non-increasing order. Then, 
$$
w_k = a_{M(k,j)} \ , \peso{where \ \ $M(k, j) = \sum_{i=1}^{k-1} d_i+j$ 
\, ,\ \   for }\ k \in \IM\ \text{ and } \ j \in \IN{d_k} \ .
$$
We now consider the $n$-irregularity $r_n(\mathbf a)$ of the vector $\mathbf a$ :
$$
r_n(\mathbf a) = \max  \Big\{ j\, \in  \, \IN{n-1} \ : \ 
(n-j)a_j>\suml_{i=j+1}^M a_i\, \Big\} \ , $$
if the set on the right is non empty, and $r_n(\mathbf a)=0$ otherwise. It is  straightforward that $r_n(\mathbf a)=\suml_{i=1}^{j_0}d_i$ in the first case. Therefore, 
inequality \eqref {en el IAM} can be deduced from \cite[Theorem 10]{casazza2} 
(see also \cite{RS}). 
The same result of \cite {casazza2} 
shows that equality in Eq. \eqref {en el IAM} implies that 
$S_1=\{e^{(i)}_j: 1\leq i\leq j_0\, ,\  j\in\IN{d_i}\}$ is an orthonormal 
system in $\C^n$, and $S_2 = \{e^{(i)}_j: j_0+1\leq i\leq m \, ,\ j\in\IN{ d_i}\}$
is a tight frame for $\cS^\perp$, where $\cS = \gen{S_1} = \gen{W_i:\ 1\leq i\leq j_0}$. 
\end{proof}

\pausa
The following example shows that the lower bound in \eqref{en el IAM}  is not sharp in general.  

\begin{exa}\label{noattine}
If we set $n=3$, $\mathbf d=(2,2)$ and $w_1=w_2=\frac{1}{2}$ then $\suml_{i \in \IM}\,  w_i^2\,d_i=1$ and $J_0(\d , w)=0$. Therefore, by Theorem \ref{la irregularidad fusion} the equality \eqref{en el IAM} holds only in tight fusion frames. Still, there are no tight FS in $S_{2,3}(\mathbf d,w)\subset S_{2,3}(\mathbf d)$ since the  Example  \ref{existencia de tights}  shows that there are no tight FS in the (bigger set) $S_{2,3}(\mathbf d)$. 
\EOE 
\end{exa}

\subsection{Structure of local minima: The geometrical approach}

In what follows we consider a perturbation result for Bessel sequences of subspaces. We begin by considering some well known facts from differential geometry that we shall need below. In what follows we consider the unitary group $\cU(n)$ together with its natural differential geometric (Lie) structure. 
It is well known that the tangent space $\mathcal{ T}_{I_n}\,\cU(n)$ at the identity 
can be naturally identified with the real vector space 
$$
\mat_{ah} = i \cdot \mat _{sa} = \big\{ X \in \mat : X^* = -X \big\} \ ,  
$$ 
of anti-hermitian matrices. 
Given $G\in  \mat^+$ we consider the smooth map 
\beq\label{psiG}
\Psi_G:\cU(n)\rightarrow \cU(G)\inc \mat   \peso{given by} \Psi_G(U)=U^*GU \ , \quad U \in \matu \ ,
\eeq
where $\cU(G)$ is the unitary orbit of $G$. Under the previous identification, 
the differential of $\Psi_G$ at a the point $I_n\in \cU(n)$ in the direction given by 
$X\in \mat_{ah}$ is given by 
\begin{equation}\label{prediffi}
(D\Psi_G)_{I_n} (X)= XG -GX = [X,G] \ .
\end{equation} 
It is well known that the map $\Psi_G$ is a submersion of $\matu $ onto $\cU(G)$. Therefore, 
the differential $(D\Psi_G)_{I_n}$ is an epimorphism, 
and hence \eqref{prediffi} gives us a description of the tangent space of the manifold 
$\cU(G)$ at the point $G$: We have that $T_G\cU(G) = \big\{ \, [X, G] : X \in \mat_{ah} \, \big\}$.  

\pausa
Let us fix some notations. We denote by 
$$
\matsauno = \{A\in \matsa : \tr A = 1\} \peso{and} 
\matsao = \{A\in \matsa : \tr A = 0 \} \ .
$$
Observe that $\matsauno$ is an affine manifold contained in the real vector space 
$\matsa\,$, whose tangent space is the subspace $\matsao\,$. 
On the other hand, given $X, Y \in \matsa\,$, it is easy to see that 
$\tr XY \in \R$. Therefore the inner product $\api A,\ B \cpi = \tr \, B^*A $ 
of $\mat$ still works as a real inner product on $\matsa\,$. 

\pausa
Given a set $\{P_j :j\in \IM\}\inc \matsa$ of projections, we denote by 
\beq\label{conm}
\{P_j :j\in \IM\}' = \{A\in \mat : AP_j = P_j \, A \peso{for every} j \in \IM\}\ .
\eeq
Note that $\{P_j :j\in \IM\}'$ is a closed selfadjoint subalgebra of $\mat$.
Therefore, the algebra 
$\{P_j :j\in \IM\}' \neq \C\, I_n \iff $ there exists 
a non-trivial orthogonal projection $Q \in \{P_j :j\in \IM\}'$. 

\medskip
\begin{teo}\label{sec loc} Let $\msfram \in \cB_{m, \, n}^1(\d, w)$  be a BSS. Denote 
$P_j=P_{W_j}$ for every $j\in \IM\,$.  
Let $\Psi:\matu^m\rightarrow \matsauno \inc \matsa $ be the smooth function given by 
$$ 
\Psi(U_1,\ldots,U_m)=\sum_{j=1}^m w_j^2 \, U_j^* P_j\,U_j 
=\sum_{j=1}^m w_j^2 \, \Psi_{P_j} (U_j)  \ \  , \peso{for}
 (U_1,\ldots,U_m) \in \matu^m \ .
$$ 
Then the following conditions are equivalent:
\ben
\item The differential of $\Psi$ at $I:= (I_n,\ldots,I_n)\in \matu^m$ 
is {\bf surjective} 
\item $\{P_j :j\in \IM\}' =  \C\, I_n \,$. 
\een 
In this case, the image of $\Psi$ contains an open neighborhood of $\Psi(I)
= \suml_{j=1}^m w_j^2 \,P_j\,$ in $\matsauno\, $, and 
$\Psi$ admits smooth (and hence continuous) local cross sections around $\Psi(I)$. 

\end{teo}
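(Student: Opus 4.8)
The plan is to reduce the equivalence to a computation of the differential $(D\Psi)_I$, and then to obtain the last two assertions from the submersion theorem.

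First I would compute the differential at $I=(I_n,\dots,I_n)$. Since $\Psi$ is the weighted sum of the maps $\Psi_{P_j}$ applied to the separate factors, the tangent space of $\matu^m$ at $I$ is $\matah^m$, and by \eqref{prediffi} the differential in a direction $(X_1,\dots,X_m)\in\matah^m$ is
$$(D\Psi)_I(X_1,\dots,X_m)=\sum_{j=1}^m w_j^2\,[X_j,P_j] \ .$$
A one-line check shows that each $[X_j,P_j]$ is hermitian (because $X_j^*=-X_j$ and $P_j^*=P_j$) and traceless, so the image indeed lies in $\matsao$, the tangent space of the affine manifold $\matsauno$. Write $L:=(D\Psi)_I\colon\matah^m\to\matsao$; condition 1 is precisely the surjectivity of $L$.

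Next I would characterize surjectivity by showing that the orthogonal complement of the range of $L$ inside $\matsao$, with respect to the real inner product $\langle A,B\rangle=\tr\,B^*A$ on $\matsa$, is exactly $\matsao\cap\{P_j:j\in\IM\}'$. Indeed, a traceless hermitian $A$ is orthogonal to the range iff $\sum_j w_j^2\,\tr(A\,[X_j,P_j])=0$ for every $(X_1,\dots,X_m)$; using the cyclic identity $\tr(A\,[X_j,P_j])=\tr([P_j,A]\,X_j)$, and noting that $[P_j,A]$ is anti-hermitian, I would specialize $X_j=-[P_j,A]$ for each $j$ to get $\tr\bigl([P_j,A](-[P_j,A])\bigr)=\|[P_j,A]\|_2^2$, which forces $[P_j,A]=0$ for all $j$ (here $w_j>0$ is essential). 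The reverse inclusion is immediate. This identifies the annihilator with the traceless hermitian part of the commutant $\{P_j:j\in\IM\}'$ defined in \eqref{conm}.

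The equivalence now follows. If $\{P_j:j\in\IM\}'=\C\,I_n$, then its only traceless hermitian element is $0$, the orthogonal complement of the range is trivial, and $L$ is surjective. Conversely, if the commutant is strictly larger than $\C\,I_n$, the remark preceding the statement furnishes a non-trivial orthogonal projection $Q\in\{P_j:j\in\IM\}'$; then $A:=Q-\tfrac{\tr Q}{n}\,I_n$ is a non-zero traceless hermitian element of the commutant, exhibiting a non-trivial orthogonal complement, so $L$ fails to be surjective. For the final claims, once $(D\Psi)_I$ is surjective the map $\Psi$ is a submersion at $I$, and the local submersion (implicit function) theorem guarantees both that $\Psi$ is open near $I$, so its image contains an open neighborhood of $\Psi(I)$ in $\matsauno$, and that $\Psi$ admits smooth local cross sections around $\Psi(I)$. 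I expect the main obstacle to be the orthogonal-complement computation, specifically verifying that orthogonality to the range of $L$ is exactly simultaneous commutation with all the $P_j$; this rests on the anti-hermiticity of $[P_j,A]$ together with the positive-definiteness of the trace form, while the remaining *-algebra step is routine.
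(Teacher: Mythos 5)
Your proposal is correct and follows essentially the same route as the paper: the same formula for $(D\Psi)_I$, the same duality argument identifying the orthogonal complement of its range with the traceless hermitian part of $\{P_j : j\in\IM\}'$ via the trace identity $\tr(A\,[X_j,P_j])=\tr([P_j,A]\,X_j)$ and the test choice $X_j=\pm[P_j,A]$, and the same appeal to the submersion theorem for the openness and local cross sections. The only cosmetic difference is that you construct the witness $Q-\tfrac{\tr Q}{n}I_n$ explicitly where the paper simply takes any nonzero traceless hermitian element of the commutant.
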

\begin{proof}
It is clear from its definition that $\Psi$ is a smooth function. Moreover, under the previous 
identification $T_{I_n} \cU(n) = \mat_{ah}\,$, and using Eq. \eqref{prediffi}, we can see that \begin{equation}\label{fora}
D\Psi_{I}(X_1,\ldots,X_m)=
\sum_{j=1}^m w_j^2\, [X_j,  P_j ] \  \ ,\peso{for}  (X_1,\ldots,X_m) \in \mat_{ah}^m \ .
\end{equation}
The tangent space of $\matsauno$ is the real vector space $\cM_n^0(\CC)_{sa}\,$, which  has a natural inner product given by $\langle Y,Z\rangle=\tr(YZ)$.
Denote by  $T = D\Psi_I $ and assume that $T$ 
is not surjective. Then there exists $0\neq Y\in \cM_n^0(\CC)_{sa} $ which  is orthogonal to the 
image of $T$. Using Eq. \eqref{fora} we deduce that, for every  $(X_1,\ldots,X_m) \in \mat_{ah}^m\,$,  
it holds that 
\begin{equation}\label{ortogo}
0=\big\langle T(X_1,\ldots,X_m), Y\big\rangle = 
\sum_{j=1}^mw_j^2\,\tr \big(\, [X_j\, ,P_j]\,Y \,  \big) =
\sum_{j=1}^mw_j^2\,\tr\big(\, X_j\,[P_j\, ,Y]\,  \big)\ . 
\end{equation}
Since each $[P_j\,,Y]\in \mat_{ah}\,$,  we can choose each  $X_j=[P_j\,,Y]$,  
and so Eq. \eqref{ortogo} implies that $[P_j\, ,Y]=0$ for 
every $ j\in \IM\,$. 
In other words,  that $Y\in \{P_j :j\in \IM\}'$. 
On the other hand, since $0\neq Y \in \matsa$ and $\tr Y = 0$,  
then $Y \notin \C \, I_n\,$. 
The converse follows from the previous argument, by taking 
$Y \in \{P_j :j\in \IM\}'$ such that $0 \neq Y = Y^*$ and $\tr Y = 0$.
\end{proof}

\pausa
Let $(\mathbf d,w)\in \N^m\times\RR_{>0}^m$ be a normalized pair.
We shall consider on $\cB_{m, \, n} (\d , w)$ the distance 
$$
d_P(\cW_w\, ,\cW'_w)=\max_{i\in \IM} \|P_{W_i}-P_{W_i' }\|  \ 
$$ 
(recall that the weights are fixed), called $punctual$, and the pseudo-distance 
$$ 
d_S(\cW_w\, ,\cW'_w)=  \|S_{\cW_w}-S _{\cW_w' }\|  \ ,
$$ 
called $operatorial$. The problem of finding $local$ minimizers for the FF-potential can 
be stated for anyone of those distances between FF's. 

\pausa

\begin{cor} \label{sec loc 2}
Let $(\mathbf d,w)\in \N^m\times\RR_{>0}^m$ be a normalized pair.
Assume that $\FS\in \cB_{m, \, n} (\d , w)$ satisfies that $\{P_{W_j} :j\in \IM\}' = \C \, I_n\,$.
Then $\cW_w$ is a FF and the map 
$$
S : \cB_{m, \, n} (\d, w) \to \matsauno  \peso{given by} S(\cV_w) = S_{\cV_w} 
= \suml_{i\in \IM} w_i^2 P_{V_i} \ ,
$$
for $\cV_w = (w_i,V_i)\subim \in \cB_{m, \, n} (\d, w) $, satisfies that 
\ben
\item The image of $S$ contains an open neighborhood of $S_{\FS}$ in $\matsauno\, $.
\item $S$ has 
$d_P$-continuous local cross sections around $S_{\FS}\,$.
\een
\end{cor}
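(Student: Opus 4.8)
The plan is to realize the map $S$ as a factor of the map $\Psi$ from Theorem \ref{sec loc}, through the natural unitary parametrization of $\cB_{m, \, n}(\d, w)$. Concretely, I would introduce
\[
\pi : \matu^m \to \cB_{m, \, n}(\d, w), \qquad \pi(U_1, \ldots, U_m) = (w_j,\ U_j^* W_j)_{j\in\IM},
\]
and observe first that $\pi$ is surjective: since $\dim U_j^* W_j = \dim W_j = d_j$ for every $U_j \in \matu$, each tuple lands in $\cB_{m, \, n}(\d, w)$; and given any $\cV_w = (w_j, V_j)\subim$ in this set, one may pick for each $j$ a unitary $U_j$ with $U_j^* W_j = V_j$ (both subspaces have dimension $d_j$), so $\pi(U_1,\dots,U_m) = \cV_w$. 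Because $P_{U_j^* W_j} = U_j^* P_{W_j} U_j = U_j^* P_j U_j$, the defining formulas yield the key identity $S \circ \pi = \Psi$.

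Next I would settle the first assertion, that $\cW_w$ is a FF, i.e. $S_{\cW_w} \in \matinv^+$. If $S_{\cW_w}$ were singular, choose $f \neq 0$ with $S_{\cW_w} f = 0$; then $0 = \langle S_{\cW_w} f, f\rangle = \suml_{j\in\IM} w_j^2 \|P_j f\|^2$ forces $P_j f = 0$ for all $j$ (as $w_j > 0$), so the null space is $N = \bigcap_{j} W_j^\perp$. Setting $Q = P_N$, one checks $Q P_j = P_j Q = 0$ for every $j$, because $W_j \perp N$; hence $Q \in \{P_j : j\in\IM\}' = \C\, I_n$. A scalar projection is $0$ or $I_n$, and $Q = I_n$ is impossible since each $d_j \geq 1$. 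Thus $N = \trivial$ and $S_{\cW_w}$ is invertible.

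For the remaining two items I would transport the conclusions of Theorem \ref{sec loc} through $\pi$. The hypothesis $\{P_j : j\in\IM\}' = \C\, I_n$ guarantees, by that theorem, that $D\Psi_I$ is surjective and that $\Psi$ admits smooth local cross sections around $\Psi(I) = S_{\cW_w}$, whose image covers an open neighborhood of $S_{\cW_w}$ in $\matsauno$. Since $\pi$ is onto, $S\big(\cB_{m, \, n}(\d, w)\big) = S(\pi(\matu^m)) = \Psi(\matu^m)$ contains that neighborhood, which is item 1. For item 2, if $\sigma : \mathcal O \to \matu^m$ is a smooth cross section of $\Psi$ on a neighborhood $\mathcal O$ of $S_{\cW_w}$, then $\widetilde\sigma := \pi \circ \sigma$ satisfies $S \circ \widetilde\sigma = (S\circ\pi)\circ\sigma = \Psi \circ \sigma = \mathrm{id}_{\mathcal O}$, so $\widetilde\sigma$ is a cross section of $S$.

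The one point requiring genuine (though mild) attention—the main obstacle—is the $d_P$-continuity asserted in item 2. The section $\sigma$ is continuous into $\matu^m$, so it remains to check that $\pi$ is continuous when the target carries the punctual distance $d_P$. This reduces to the continuity of each map $\matu \ni U_j \mapsto P_{U_j^* W_j} = U_j^* P_j U_j \in \matpos$, which is immediate since it is polynomial in $U_j$ and $U_j^*$; hence $d_P(\pi(U), \pi(U')) = \max_j \|U_j^* P_j U_j - (U_j')^* P_j U_j'\| \to 0$ as $U \to U'$. Therefore $\widetilde\sigma = \pi \circ \sigma$ is $d_P$-continuous, completing item 2. (I would also note that $\tr S_{\cV_w} = \suml_{i\in\IM} w_i^2 d_i = 1$ by normalization, so $S$ indeed takes values in $\matsauno$.)
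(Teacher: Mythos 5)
Your proposal is correct and follows essentially the same route as the paper: the paper also composes a local cross section of $\Psi$ from Theorem \ref{sec loc} with the unitary parametrization $\matu^m \to \cB_{m,\,n}(\d,w)$ (there called $\Phi$, with $S\circ\Phi=\Psi$), and derives the FF property from the observation that the commutant condition forces $\cW$ to be generating. Your write-up merely spells out details the paper leaves implicit (surjectivity of the parametrization, the projection-onto-$\bigcap_j W_j^\perp$ argument, and the $d_P$-continuity of $\pi$), and in fact fixes a small directional slip, since with $\pi(U)=(w_j,\,U_j^*W_j)_{j\in\IM}$ the identity $S\circ\pi=\Psi$ holds exactly.
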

\proof Notice that the condition  $\{P_{W_j} :j\in \IM\}' = \C \, I_n\,$ implies that $\cW$ 
is a generating sequence of subspaces. To prove the properties of the map $S$, 
just compose a local cross section for the map $\Psi$ of Theorem \ref{sec loc} 
(which is open in $I$) with 
the map $\Phi: \matu^m \to \cB_{m, \, n} (\d)$ given by
$$ 
\Phi (U_1\, , \dots , U_m ) = (w_i \, , U_i (W_i)\,)_{i\in \IM}  = 
(w_i \, , R(U_i P_{W_i} U_i^*)\,)_{i\in \IM} \ . 
$$
Observe that $S \circ \Phi = \Psi$, so that $S$ is open in $\Phi(I) = \FS$. \QED

\begin{rem}\label{desc alter}
Here is an alternative statement of Corollary \ref{sec loc 2}: Under the same assumptions and 
notations about $\FS$, it holds that  $S_{\cW_w}\in \matinv^+$ and, for every sequence 
$(S_k)_{k\in \N}$ in $ \matsauno$ such that 
$S_k \convk S_{\cW_w}\,$,  there exists a sequence  $(\cV_k)_{k\ge k_0}$ 
in $\cS_{m, \, n} (\d\, , \, w)$ such that 
$$ 
d_P(\cV _k \, , \, \FS ) \convk 0 \peso{and}  S_{\cV_k} = S_k  \peso{for every} k_0 \le k \in \N \ . 
$$
This formulation of Corollary \ref{sec loc 2} generalizes \cite[Thm 5.3]{MR} to the context of fusion frames with fixed weights. \EOE
\end{rem}

\pausa
It is not clear that a $d_p\,$-local minimizer for the FF-potential on $\cB_{m, \, n} (\d , w)$ 
must be a fusion frame, i.e. its frame operator is an invertible operator. The following Lemma 
shows that this is true.

\begin{lem}\label{los minimos son frames}
Let $(\mathbf d,w)\in \N^m\times\RR_{>0}^m$ be a normalized pair. 
Let $\cW_w$ be a $d_p$-local minimizer for the FF-potential in $\cB_{m, \, n}(\mathbf d,w)$. 
Then $S_{\cW_w}$ is invertible $($equivalently, $\cW_w\in \cS_{m, \, n}(\mathbf d,w)$$\,)$.
\end{lem}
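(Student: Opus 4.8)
The plan is to argue by contradiction: assume $\cW_w$ is a $d_P$-local minimizer but $S:=S_{\cW_w}$ is not invertible, and then exhibit a nearby element of $\cB_{m, \, n}(\mathbf d, w)$ with strictly smaller FF-potential. Since $S\in\matpos$ is non-invertible, its null space $N=N(S)$ is non-trivial, and for a unit vector $v\in N$ one has $0=\langle Sv,v\rangle=\suml_{i\in\IM}w_i^2\|P_{W_i}v\|^2$; as every $w_i>0$, this forces $P_{W_i}v=0$ for all $i$, i.e. each $W_i\subseteq N^\perp$. The perturbations I would use move a single subspace: fix $v\in N$ as above, a unit eigenvector $u$ of $S$ with $Su=\lambda u$ and $\lambda>0$, and an index $i_0$; then rotate $W_{i_0}$ inside the plane $\gen{u,v}$ through the one-parameter orthogonal group $R_t=\exp\big(t\,(vu^*-uv^*)\big)$, replacing $P_{i_0}:=P_{W_{i_0}}$ by $R_t P_{i_0} R_t^*$. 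This preserves the dimension $d_{i_0}$ and the weight $w_{i_0}$, so the perturbed family $\cW_w^{(t)}$ stays in $\cB_{m, \, n}(\mathbf d, w)$, and $d_P(\cW_w,\cW_w^{(t)})=\|P_{i_0}-R_t P_{i_0}R_t^*\|\to 0$ as $t\to 0$.

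Next I would compute $f(t)=\FP(\cW_w^{(t)})=\tr S(t)^2$, where $S(t)=S+w_{i_0}^2\,\Delta_t$ and $\Delta_t=R_t P_{i_0}R_t^*-P_{i_0}$. Two structural facts keep this clean: the plane $V=\gen{u,v}$ is $S$-invariant (because $u$ is an eigenvector and $v\in N$), and $P_{i_0}v=0$ (because $v\perp W_{i_0}$). With $a:=\|P_{i_0}u\|^2$, these give $\tr(S\Delta_t)=-\lambda\,a\,\sin^2 t$ and $\tr(\Delta_t^2)=2a\,t^2+O(t^3)$, whence
\[
f(t)=\FP(\cW_w)+2\,a\,w_{i_0}^2\,(w_{i_0}^2-\lambda)\,t^2+O(t^3).
\]
The first-order term vanishes automatically (rotating toward a vector annihilated by both $S$ and $P_{i_0}$ is balanced), so the sign is decided at second order: if $a>0$ and $w_{i_0}^2<\lambda$, then $f(t)<\FP(\cW_w)$ for small $t\neq 0$, contradicting local minimality.

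The main obstacle, and the crux of the argument, is to guarantee that such a triple $(\lambda,u,i_0)$ exists, namely an eigenvector $u$ of $S$ with positive eigenvalue $\lambda$ and an index $i_0$ with $P_{W_{i_0}}u\neq 0$ and $w_{i_0}^2<\lambda$. I would establish this by a rank/trace count, again by contradiction. If no such triple existed, then for every eigenvector $u_k$ of $S$ with eigenvalue $\lambda_k>0$ each contributing index would satisfy $w_i^2\ge\lambda_k$, so $\lambda_k=\suml_{i\in\IM}w_i^2\|P_{W_i}u_k\|^2\ge\lambda_k\suml_{i\in\IM}\|P_{W_i}u_k\|^2$, giving $\langle T u_k,u_k\rangle\le 1$ for the unweighted sum $T=\suml_{i\in\IM}P_{W_i}$. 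Summing over an orthonormal eigenbasis $\{u_k\}_{k=1}^{r}$ of $R(S)=N^\perp$, where $r=\mathrm{rk}\,S$, and using that $T$ is supported on $N^\perp$, yields $\suml_{i\in\IM}d_i=\tr T=\suml_{k=1}^{r}\langle T u_k,u_k\rangle\le r$. But non-invertibility of $S$ forces $r\le n-1$, while $M=\suml_{i\in\IM}d_i\ge n$; hence $n\le M\le r\le n-1$, a contradiction. This secures the required triple, so the rotation above strictly lowers the potential and the local minimizer must in fact have $S_{\cW_w}\in\matinv^+$.
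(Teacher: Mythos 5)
Your proof is correct; I checked the two trace identities it rests on (with $V=\gen{u,v}$ reducing both $S:=S_{\cW_w}$ and $R_t\,$, one gets $\tr(S\Delta_t)=-\lambda\,a\sin^2 t$ exactly, and $\Delta_t=t\,[X,P_{i_0}]+O(t^2)$ with $[X,P_{i_0}]=v(P_{i_0}u)^*+(P_{i_0}u)v^*$ gives $\tr(\Delta_t^2)=2a\,t^2+O(t^3)$), and your trace-count lemma is sound. The overall skeleton matches the paper's: contradiction, the observation that every $W_i\subseteq R(S)$, a rotation of one subspace toward the kernel with the gain at order $t^2$, and the dimensional input $\tr \mathbf d\ge n>\mathrm{rk}\,S$. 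But the execution differs genuinely at the decisive step. The paper perturbs at the level of a single \emph{vector}: by pigeonhole (the $W_i$ cannot be pairwise orthogonal inside $R(S)$ since $\suml_i d_i>\dim R(S)$) it finds $i\ne j$ with $P_{W_j}P_{W_i}\ne 0$, picks a unit $f\in W_i\setminus W_j^\perp$ and $g\in N(S)$, and replaces $f$ by $\cos t\, f+\sin t\, g$ inside $W_i\,$. Along that curve the second-order coefficient is $2w_i^2\bigl(w_i^2-\langle Sf,f\rangle\bigr)$, and the choice of $f$ makes $\langle Sf,f\rangle\ \ge\ w_i^2\|P_{W_i}f\|^2+w_j^2\|P_{W_j}f\|^2\ >\ w_i^2$ automatically, so the strict decrease comes for free from the overlap condition, with no spectral analysis of $S$. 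Your perturbation instead conjugates $P_{W_{i_0}}$ by a one-parameter unitary group, and your decrease $2a\,w_{i_0}^2(w_{i_0}^2-\lambda)t^2+O(t^3)$ is conditional on $w_{i_0}^2<\lambda$; this obliges you to prove a separate existence lemma, which you do via the neat estimate $\suml_i d_i=\tr\bigl(\suml_i P_{W_i}\bigr)\le \mathrm{rk}\,S\le n-1$ under the negation — the same dimensional hypothesis the paper spends on the pigeonhole, spent differently. The trade is fair: the paper's vector-level move hides the eigenvalue comparison inside the choice of $f$, while your unitary-orbit move yields a cleaner, coordinate-free second-order expansion and meshes with the tangent-space computations the paper uses elsewhere for its local-section theorem; both arguments are of comparable length and rigor.
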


\proof
Suppose that $S_{\FS}$ has nontrivial nullspace $N(S_{\FS})$.
If $x \in N(S_{\FS})$
\begin{align}\label{pepe}
0 &=\pint{S_{\FS}\, x}{x}=\sum_{j\in\IN{m}}\,\pint{w_j^2P_{W_j}\, x}{x}
=\sum_{j\in\IN{m}}\,w_j^2\,\|P_{W_j}\, x\|^2  \ .
\end{align} 
In other words, $W_i \inc R(S_{\FS})$ for every $i \in \IM\,$. Since 
$\tr(\mathbf d)\ge n> \dim R(S_{\FS})$, we deduce that there exists $i \ne j $ in $ \IM$ such that 
$P_{W_j} \, P_{W_i} \ne 0$. Fix that pair $i, \, j$. Fix also  $f \in W_i \setminus W_j^\perp $ and 
 $g \in N(S_{\FS})$  two unit vectors. For every $t\in [0, \pi/2]$, 
take the unit vector  $g(t) = \cos t \cdot f + \sin t \cdot g$. 

\pausa
Let $V_i = W_i \ominus  \ \gen{f}$,  $W_i (t) = V_i \oplus \gen {g(t)}$ and 
$\FS(t) \in \cB_{m, \, n}(\mathbf d,w)$ the sequence obtained 
by replacing $W_i$ by $W_i(t)$ in $\FS\,$.  
As $g \in W_k^\perp$ for every $k \in \IM\,$,  for every $t\in (0, \pi/2]$,  
$$
\barr{rl}
\frac{\FP(\FS)- \FP(\FS(t)\,)}{2}  & =   
\suml_{k=1}^m w_i^2w_k^2 \Big( \tr(P_{W_i}\, P_{W_k}) -  \tr(P_{W_i(t)}P_{W_k}) \, \Big) \\
& =\suml_{k=1}^m w_i^2w_k^2 \Big( \tr(P_{W_i}\, P_{W_k}) -  
\tr \big( g(t) g(t)^* \,  P_{W_k}\big) - \tr(P_{V_i}P_{W_k})\, \Big) \\
& =\suml_{k=1}^m w_i^2w_k^2 \Big( \tr(P_{W_i}\, P_{W_k}) -  
\cos^2 t \, \tr \big( ff^* \,  P_{W_k} \big) - \tr(P_{V_i}P_{W_k})\, \Big) \\
	& > \suml_{k=1}^m w_i^2w_k^2 \Big( \tr(P_{W_i}\, P_{W_k}) - 
	\tr \big( ff^* \,  P_{W_k} \big) - \tr(P_{V_i}P_{W_k})\, \Big) = 0  \ ,
\earr 
$$
because $\tr \big( ff^* \,  P_{W_j} \big) = \|P_{W_j} \, f\|^2 \ne 0$. 
Hence
$\FP (\cV_w(t)\,) < \FP(\FS)$ for every $t\in (0, \pi/2]$.  
Taking  $t \to 0$, we have that $\FS(t) \overset{d_P}{\to} \FS$, and this contradicts the minimality
of $\FS$. \QED

\pausa
Given $S\in \matsa$ with $\sigma (S) = \{\mu_1\, , \dots, \,\mu _r\}$, 
we denote by $P_{\mu_k}(S) = P_{N(S-\mu_k \, I_n)}\in \matpos$, the spectral projection of $S$ 
relative to $\mu_k\,$, for $k \in \IN{r}\,$.  These projections satisfy that
\ben
\item $P_{\mu_k}(S)\, P_{\mu_j}(S) = 0$ if  $k\neq j$, 
and $\suml_{k=1}^p P_{\mu_k}(S) =I_n\,$
(i.e., they are a system of projectors). 
\item For every $k \in \IN{r}\,$, it holds that $S \,  P_{\mu_k}(S) = \mu _k \, P_{\mu_k}(S)$, 
so that $S = \suml_{k=1}^p \mu _k \, P_{\mu_k}(S) $. 
\een
The following theorem generalizes a similar result given in the paper by Casazza and Fickus 
\cite[Theorem 4]{CF}, for the case %$\d = d \cdot \uno_m $ and 
$w = \uno_m\,$. Nevertheless, our approach is based on completely different techniques.

\begin{teo}\label{conmutan}
Let $(\mathbf d,w)\in \N^m\times\RR_{>0}^m$ be a normalized pair.
Let $\cW_w  \in \cB_{m, \, n}(\mathbf d,w)$ be a local minimizer of 
the FF-potential with respect to the distance $d_P\,$. If 
$\sigma(S_{\FS} ) = \{\mu_1\, , \dots, \,\mu _r\}$, then 
\beq\label{conmu}
P_{\mu_k}(S_{\FS} )\ \in \ C_{\FS} = \{P_{W_j} :j\in \IM\}' 
\peso {for every}  k\in \IN{r}\  . 
\eeq
The same property holds 
whenever $\FS$ is a $d_P$-local minimizer 
in $\mathcal B^1_{m, \, n}(\mathbf d)$.
\end{teo}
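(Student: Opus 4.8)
The plan is to reduce the statement to the single commutation identity $[P_{W_j},S_{\cW_w}]=0$ for every $j\in\IM$, and to extract this from the first-order stationarity of the potential along unitary perturbations of the subspaces. First I would observe that each spectral projection $P_{\mu_k}(S_{\cW_w})$ is a polynomial in $S_{\cW_w}$ (Lagrange interpolation on the distinct eigenvalues $\mu_1,\ldots,\mu_r$), so the inclusions $P_{\mu_k}(S_{\cW_w})\in C_{\cW_w}$ for all $k$ hold if and only if $S_{\cW_w}$ itself commutes with every $P_{W_j}$. Thus it suffices to prove that all the commutators $[P_{W_j},S_{\cW_w}]$ vanish; note that this uses only that $S_{\cW_w}$ is hermitian, not the invertibility granted by Lemma \ref{los minimos son frames}.

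Next I would set up the variation using the map $\Psi$ of Theorem \ref{sec loc}. Every configuration $d_P$-close to $\cW_w$ inside $\cB_{m,\,n}(\mathbf d,w)$ has the form $(w_i,U_i(W_i))_{i\in\IM}$ for unitaries $U_i$ near $I_n$, and its frame operator is exactly $\Psi(U_1,\ldots,U_m)=\sum_j w_j^2\,U_j^*P_{W_j}U_j$, so its potential equals $\tr\,\Psi(U_1,\ldots,U_m)^2$. Hence $I=(I_n,\ldots,I_n)$ is a two-sided local minimum of $U\mapsto\tr\,\Psi(U)^2$, and differentiating $\tr\,\Psi^2$ (with $\Psi(I)=S_{\cW_w}$) the first-order condition reads $\langle D\Psi_I(X_1,\ldots,X_m),\,S_{\cW_w}\rangle=0$ for every $(X_1,\ldots,X_m)\in\matah^m$.

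Then, invoking the derivative formula \eqref{fora}, this condition becomes $\sum_j w_j^2\,\tr\big([X_j,P_{W_j}]\,S_{\cW_w}\big)=0$, and the same trace manipulation used in \eqref{ortogo} rewrites it as $\sum_j w_j^2\,\tr\big(X_j\,[P_{W_j},S_{\cW_w}]\big)=0$. Perturbing one subspace at a time (all other $X_k=0$) and choosing the anti-hermitian direction $X_j=[P_{W_j},S_{\cW_w}]$ forces $\tr\big([P_{W_j},S_{\cW_w}]^2\big)=-\|[P_{W_j},S_{\cW_w}]\|_{_2}^2=0$, whence $[P_{W_j},S_{\cW_w}]=0$ for every $j\in\IM$. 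This gives $S_{\cW_w}\in C_{\cW_w}$ and hence the spectral-projection statement \eqref{conmu}. For the second assertion I would argue by restriction: a $d_P$-local minimizer in $\cB_{m,\,n}^1(\mathbf d)$ is, a fortiori, a $d_P$-local minimizer over the smaller set $\cB_{m,\,n}(\mathbf d,w)$ obtained by freezing its weights, so the argument above applies verbatim.

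The computation is a routine first variation that reuses the paper's own formulas \eqref{fora} and \eqref{ortogo}; the only steps needing genuine care are the functional-calculus reduction in the first paragraph and the verification that the orbit parametrization $U\mapsto(w_i,U_i(W_i))_{i\in\IM}$ really does cover a full $d_P$-neighborhood of $\cW_w$, so that first-order stationarity is forced and the perturbation is admissible in both directions of $t$. It is worth emphasizing that, unlike the companion results asserting that all minimizers share a common spectrum, this statement requires no Horn--Klyachko input: it is purely a stationarity argument.
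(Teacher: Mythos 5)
Your proof is correct, and it takes a genuinely different route from the paper's. The paper's own argument is structural and second-order in flavor: it first invokes Lemma \ref{los minimos son frames} to get invertibility of $S_{\FS}\,$, then takes a maximal system of projectors $\{Q_k\}$ inside the commutant $C_{\FS}\,$, shows that the restriction of $\FS$ to each block $R(Q_k)$ is again a local minimizer, and finally uses the openness and local cross sections of Theorem \ref{sec loc} and Corollary \ref{sec loc 2} (surjectivity of $\Psi$ when the reduced commutant is trivial) to force $S_{\FS}\,Q_k=\alpha_k\,Q_k$ by contradiction. Your argument is instead a direct first-variation computation in the spirit of the Benedetto--Fickus analysis for vector frames: differentiate $t\mapsto \tr\,\Psi(e^{tX_1},\ldots,e^{tX_m})^2$ at $t=0$, use \eqref{fora} and the trace identity $\tr([X_j,P_{W_j}]\,S_{\FS})=\tr(X_j\,[P_{W_j},S_{\FS}])$, and pick the commutator direction $X_j=[P_{W_j},S_{\FS}]\in\matah$ to conclude $\|[P_{W_j},S_{\FS}]\|_{_2}=0$; the functional-calculus reduction to \eqref{conmu} is then immediate since each $P_{\mu_k}(S_{\FS})$ is a polynomial in $S_{\FS}\,$, and your restriction argument for the $\cB^1_{m,\,n}(\d)$ case is exactly what the paper leaves implicit. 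Your route is shorter, needs neither the invertibility lemma nor the maximal-refinement argument nor any cross sections; in fact the one point you flag as delicate is not needed at all: stationarity only requires that the two-sided curves $t\mapsto (w_i\,,e^{tX_i}(W_i))_{i\in\IM}$ lie in $\cB_{m,\,n}(\d,w)$ and are $d_P$-continuous, which is trivially true of unitary orbits, so surjectivity of the orbit parametrization onto a $d_P$-neighborhood (true, but requiring an argument) can be dropped. (The sign discrepancy between your derivative and \eqref{prediffi} is immaterial since the directions form a linear space.) What the paper's heavier machinery buys is reusable structure: Corollary \ref{sec loc 2} is wanted independently (cf. Remark \ref{desc alter}), and the block-restriction argument records that the minimizer restricted to each spectral block is again a local minimizer with trivial reduced commutant; but for the theorem as stated, your stationarity argument is complete and more elementary.
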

\proof 
Recall from \lema {los minimos son frames} that $\FS \in \mathcal S_{m, \, n}(\mathbf d , w)$, 
in other words that $0 \notin \sigma(S_{\FS})$. 
Consider the set $\mathcal Q$ of finite systems of projectors  
$\{Q_k\}_{k \in \IN{p}}$ such that each $Q_k \in C_{\FS}\, $. 
Observe that  $\mathcal Q$ is not empty because $\{I_n\}\in \mathcal Q$. 
Then $\mathcal Q$ has a maximal element $\{Q_k\}_{k \in \IN{p}}$ with respect 
to the order induced  by refinement. 
 Fix $k \in \IN{p}\,$. 
For each $i \in \IM$ put  $\ese_i =  W_i \cap R(Q_k) \,$ and 
$\ene_i = W_i\cap  R(Q_k)^\perp $. Using that $Q_k \in C_{\FS}\, $, we get that 
each $W_i = \ese_i \oplus \ene_i\,$. 
 Set $r_i = \dim \ese_i  \,$ and $\mathbf {r} = (r_1 \,  ,\dots , r_m)$. 
 Then, the sequence  $\cW_{k,\,w}=(w_i\, , \ese_i)_{i\in \IM}$ is a 
 FF for $R(Q_k)$. 
 We claim that $\cW_{k,\, w}$ is a local minimizer of the FF-potential in 
 $$
 \cB(Q_k,\mathbf r , w):=\big\{\, \cV_w = (w_i \,,\, V_i)_{i\in \IM} \in \cB_{m, \, n}(\mathbf r,w)\ :\  
 V_i\subseteq R(Q_k)  \peso{for every  \ $i \in \IM \ \big\}$ .}
 $$ 
Indeed, given $\cV_w \in \cB(Q_k,\mathbf r, w)$,  
put $ \tilde \cV_w=(w_i\, , V_i\oplus \ene_i )_{i\in \IM} \in \cB_{m, \, n}(\mathbf d, w) $.
Observe that the map  $\cV_w  \mapsto \tilde \cV_w $ preserves the distance $d_P\,$. 
Moreover, since $Q_k \in C_{\FS}\, $, then  each $P_{\ene_i} =(I_n- Q_k) P_{W_i}\,$ so that,
by Eq \eqref{defi del fusion pot},  
$\FP(\tilde \cV_w)=\FP(\cV_w)+ \FP \big( \, (w_i\, ,\ene_i)\subim \big)$, 
and the second summand does not depend on $\cV_w\,$.  
Then, the  claim follows from the fact that 
$\widetilde{\cW} _{k,\, w} = \cW_w\,$.

\pausa
Observe that $S_{\FS}$ commutes with $Q_k\,$. 
We now show that $S_{\FS}\,Q_k=\alpha_k \,Q_k$ for some $\alpha_k \in \sigma(S_{\FS})\,$. 
Indeed, 
by the maximality of $\{Q_i\}_{i=1}^p$ in $\mathcal Q$, it follows that 
there is no non-trivial 
sub-projection $Q'$ of $Q_k$ such that $Q' \in \{P_{\ese_j}\,  : \, j\in \IM\}'$. Then we can apply 
Corollary \ref{sec loc 2}  (taking $\hil = R(Q_k)$ and renormalizing the traces) to show 
that every positive operator (with the correct trace) near $S_{\FS}\,Q_k$ 
has the form $S_{\cV_w}$ for some $\cV_w \in \cB(Q_k,\mathbf r , w)$ close to 
$\cW_k\,$. But if $S_{\FS}\, Q_k$ is not a scalar multiple of $Q_k\,$, then 
we can choose $S_{\cV_w}$ in such a way that 
$$
\FP(\cV_w )= \tr S_{\cV_w}^{\, 2}<\tr\, \big( \, S_{\cW_w}^{\, 2}  Q_k \, \big) 
= \FP(\cW_k) \ .
$$
But this contradicts the fact that $\cW_k$ is a local minimizer of the 
FF-potential in  $\cB(Q_k,\mathbf r,w)$. Hence $S_{\FS}\,Q_k=\alpha_k \,Q_k\,$
and $Q_k \le P_{\alpha_k}(S_{\FS})\,$. 
Using that  $\suml_{k=1}^p Q_k=I_n\,$, it is easy to see that 
each 
\beq
P_{\mu_i}(S_{\FS}) = \suml_{k\in \, J_i} Q_k \in C_{\FS}\ , \peso{where} 
J_i = \{ \, k\in \IN{p}  \ :  \ \alpha _k = \mu_i \, \} . \QEDP
\eeq

\begin{rem}\label{conmutan2} Next we  give two reinterpretations of Theorem \ref{conmutan}.
Under the assumptions and notations of the theorem, the following properties hold:
\ben
\item 
For each $i \in \IM\,$, there exists an o.n.b. $\cB_i = \{e^{(i)}_j : j \in \IN{d_i}\}$ of $W_i\,$, consisting of eigenvectors of $S_{\FS}\,$.  
Indeed,  observe that each $P_{W_i} = \sum _{k\in \,\IN{r}} P_{W_i} P_{\mu_k}(S_{\FS})\,$ 
and the  fact that, for a fixed $i \in \IM\,$, 
 the projections $P_{W_i} \, P_{\mu_k}(S_{\FS})\,$ are pairwise orthogonal. 
\item For each $\mu_k \in \sigma(S_{\FS})$, denote by  
$\eme_{k\, , \, i} = N(S_{\FS} -\mu_k \, I_n)\cap W_i\,$. Then, it holds that 
  the sequence 
$\cW_k = (w_i \, , \, \eme_{k\, , \, i})\subim \,$ is a {\bf tight} FF 
for $N(S_{\FS} -\mu_k \, I_n)$. This follows because  its frame operator 
$S_{\cW_k } = P_{\mu_k}(S_{\FS}) \, S_{\FS} = \mu_k \, P_{\mu_k}(S_{\FS})$. 
\EOE 
\een
 \end{rem}

\subsection{The eigenvalues of all $d_S$-minimizers coincide}
Recall that, given $S\in
\matsa\,$, we denote by $\lambda(S)\in \R^n$ the vector of the $n$ eigenvalues 
of $S$, counted with multiplicities, in such a way that $\la (S) = \la(S)^\downarrow$. 

\begin{lem}\label{unlem}
Let $(\mathbf d,w)\in \N^m\times\RR_{>0}^m$ be a normalized pair.
Then  
\ben
\item $\cB_{m, \, n}(\mathbf d,w)$ is $d_P$-compact. 
\item The set $
\Lambda_{m, \, n}(\mathbf d,w)=\{\lambda(\frs):\ \cW_w\in \cB_{m, \, n}(\mathbf d,w)\} $
is a {\bf convex} and compact subset of  \ $\R^n$. 
\item 
The set $\{\frs :\ \cW_w\in \cB_{m, \, n}(\mathbf d,w)\}$ 
is compact and closed under unitary conjugation.
\een
\end{lem}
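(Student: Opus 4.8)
The plan is to reduce all three items to two facts: the compactness of the space of rank-$d$ orthogonal projections, and the Klyachko--Fulton description of the spectra of sums of hermitian matrices with prescribed spectra (\teor{teoK}). For item 1, I would identify $\cB_{m, \, n}(\mathbf d,w)$, as a metric space for $d_P$, with the product $\prod_{i\in \IM}\mathcal P_{d_i}$ equipped with the max-metric, where $\mathcal P_{d}=\{P\in\mat : P=P^*=P^2,\ \tr P=d\}$ denotes the set of rank-$d$ orthogonal projections, via $\cW_w\mapsto(P_{W_i})_{i\in\IM}$. Each $\mathcal P_{d_i}$ is closed in $\mat$ (it is cut out by the continuous conditions $P=P^*$, $P^2=P$ and $\tr P=d_i$) and bounded (a projection satisfies $\|P\|\le 1$), hence compact by Heine--Borel; a finite product of compact sets is again compact, which gives item 1.

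For item 3, the map $S(\cW_w)=\frs=\suml_{i\in\IM}w_i^2\,P_{W_i}$ is $d_P$-continuous, since $\|S_{\cW_w}-S_{\cW'_w}\|\le\suml_{i\in\IM}w_i^2\,\|P_{W_i}-P_{W_i'}\|$, so by item 1 its image $\{\frs\}$ is compact. Closure under unitary conjugation is then a one-line computation: for $U\in\matu$ one has $U^*\frs\,U=\suml_{i\in\IM}w_i^2\,P_{U^*(W_i)}$, and $(U^*(W_i))_{i\in\IM}$ again belongs to $\cB_{m, \, n}(\mathbf d,w)$ because $\dim U^*(W_i)=d_i$.

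For item 2, compactness is immediate from item 3 and the continuity of the eigenvalue map $S\mapsto\lambda(S)$ on $\matsa$, as $\Lambda_{m, \, n}(\mathbf d,w)$ is then the continuous image of a compact set. The real content is convexity, and here I would invoke \teor{teoK}. Each summand $A_i:=w_i^2\,P_{W_i}$ has the fixed spectrum $\lambda_i=(w_i^2,\dots,w_i^2,0,\dots,0)$, with $w_i^2$ repeated $d_i$ times, and as $W_i$ runs over all $d_i$-dimensional subspaces, $A_i$ runs over the entire unitary orbit of $\lambda_i$. Thus the set of frame operators equals $\{A_1+\cdots+A_m : \lambda(A_i)=\lambda_i,\ 1\le i\le m\}$, and $\Lambda_{m, \, n}(\mathbf d,w)$ is exactly the set of $\mu=\mu^\downarrow$ realizable as $\lambda(A_0)$ for such a sum. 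By \teor{teoK} this set is precisely the collection of $\mu=\mu^\downarrow$ satisfying the Horn--Klyachko inequalities \eqref{comp ec} (with the $\lambda_i$ fixed) together with the trace equality. Since all of these constraints — the ordering $\mu_1\ge\cdots\ge\mu_n$, the half-spaces coming from \eqref{comp ec}, and the trace hyperplane — are linear in $\mu$, the set $\Lambda_{m, \, n}(\mathbf d,w)$ is an intersection of half-spaces with a hyperplane, hence convex.

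The two compactness arguments and the conjugation identity are routine point-set and linear-algebra facts; the only step carrying genuine weight is the convexity in item 2, where \teor{teoK} does the work by converting the statement ``this spectrum is achievable as a sum'' into a finite system of linear inequalities in $\mu$.
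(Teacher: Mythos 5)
Your proposal is correct and follows essentially the same route as the paper: compactness comes from the compactness of the manifolds of rank-$d_i$ orthogonal projections together with the continuity of $\cW_w\mapsto S_{\cW_w}$ and $S\mapsto\lambda(S)$, and convexity rests on Theorem \ref{teoK} applied to the fixed spectra $\lambda_i=(w_i^2,\ldots,w_i^2,0,\ldots,0)$. The only cosmetic difference is that you package the Horn--Klyachko conditions as a linear-inequality description of $\Lambda_{m,\,n}(\mathbf d,w)$ (a polyhedron, hence convex), whereas the paper verifies directly that a convex combination of two achievable ordered spectra again satisfies the inequalities --- the same linearity argument either way.
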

\begin{proof}
Let $\cW_w,\, \cW'_w \in \cB_{m, \, n}(\mathbf d,w)$. 
For $t\in [0,1]$ consider $\lambda=t\,\lambda(S_{\cW_w}) +(1-t)\, \lambda(S_{\cW'_w})$ and notice that  $\lambda=\lambda^\downarrow$. Therefore, for every admissible $(m+1)$-tuple $(J_0,\ldots,J_m)\in LR_r^n(m)$, $1\leq r\leq n-1$ we have 
$$ 
\sum_{j\in J_0}\lambda_j=\sum_{i=1}^m\sum_{j\in J_i}t\,\lambda_j(S_{\cW_w})+(1-t)\,
\lambda_j(S_{\cW'_w})\, \leq \sum_{i=1}^m\sum_{j\in J_i} w_j^2\ |\{1,\ldots,d_j\}\cap J_i| \ ,
$$
since both $\lambda(S_{\cW_w})$ and $\lambda(S_{\cW'_w})$ satisfy Horn-Klyachko's inequalities. 
Hence, by Theorem \ref{teoK}, 
there exists  $\cV_w=(w_i,V_i)_{i\in \IM}\in 
\cB_{m, \, n}(\mathbf d,w)$ such that $\lambda(S_{\cV_w})=\lambda$. This shows that 
$\lambda\in \Lambda_{m, \, n}(\mathbf d,w)$,  so that $\Lambda_{m, \, n}(\mathbf d,w)$ is convex. 
The fact that the set $\{S_\cW:\ \cW_w\in \cB_{m, \, n}(\mathbf d,w)\}$ 
is closed under unitary conjugation is apparent. Finally $\cB_{m, \, n}(\mathbf d,w)$
 is $d_P$-compact because each Grassmann manifold 
 $$
 \cP_{d_i}(n) = \{P = P^*P \in \matpos : \tr P = d_i\} = \{UP_iU^* : U \in \matu\} = \cU(P_i)  \ ,
 $$ 
for every fixed $P_i \in \cP_{d_i}(n)\,$, is compact. This follows because $\matu$ is compact. 
By continuity, the other sets involved are also compact. 
\end{proof}

\begin{teo}\label{espectrae} 
Let $(\mathbf d,w)\in \N^m\times\RR_{>0}^m$ be a normalized pair.
Then, 
\ben
\item The spectra (with multiplicities) of all 
the frame operators of global minimizers of the FF-potential in 
$\cB_{m, \, n}(\mathbf d,w)$ coincide.
\item The local minimizers of the FF-potential in 
$\cB_{m, \, n}(\mathbf d,w)$ with respect to the pseudo-distance 
$d_S$ lie in $\cS_{m, \, n}(\mathbf d,w)$ and are also global minimizers.
\een
\end{teo}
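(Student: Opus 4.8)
The plan is to exploit the fact that the FF-potential sees a Bessel sequence only through the spectrum of its frame operator. Since $\FP(\cW_w)=\tr S_{\cW_w}^{\,2}=\suml_{j=1}^n \lambda_j(S_{\cW_w})^2=\|\lambda(S_{\cW_w})\|^2$, minimizing $\FP$ over $\cB_{m, \, n}(\mathbf d,w)$ is the same as minimizing the strictly convex map $g(\lambda)=\|\lambda\|^2$ over the set $\Lambda_{m, \, n}(\mathbf d,w)=\{\lambda(S_{\cW_w}):\cW_w\in \cB_{m, \, n}(\mathbf d,w)\}$, which by Lemma \ref{unlem} is convex and compact. A continuous strictly convex function on a nonempty convex compact set attains its minimum at a unique point $\lambda^\ast\in\Lambda_{m, \, n}(\mathbf d,w)$: existence is compactness and uniqueness follows because the midpoint of two minimizers would have strictly smaller value. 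This already settles the first assertion, for $\cW_w$ is a global minimizer of $\FP$ if and only if $\lambda(S_{\cW_w})=\lambda^\ast$, so all global minimizers share the same spectrum with multiplicities.

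For the second assertion, let $\cW_w$ be a $d_S$-local minimizer and write $S_0=S_{\cW_w}$ and $\lambda_0=\lambda(S_0)$. First I would show $\cW_w\in\cS_{m, \, n}(\mathbf d,w)$, i.e. that $S_0$ is invertible. The elementary observation is that $\|S_{\cW_w}-S_{\cV_w}\|=\|\suml_{i\in\IM} w_i^2(P_{W_i}-P_{V_i})\|\leq\big(\suml_{i\in\IM} w_i^2\big)\,d_P(\cW_w,\cV_w)$, so every sufficiently small $d_P$-ball around $\cW_w$ is carried into a $d_S$-ball. Hence a $d_S$-local minimizer is automatically a $d_P$-local minimizer, and Lemma \ref{los minimos son frames} then forces $S_0$ to be invertible.

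It remains to show that such a $\cW_w$ is a global minimizer, and for this I would prove that $\lambda_0$ is a local minimizer of $g$ on $\Lambda_{m, \, n}(\mathbf d,w)$; by convexity of the domain and strict convexity of $g$, a local minimizer is global, which gives $\lambda_0=\lambda^\ast$ and finishes the proof. The crux is a lifting step: given $\lambda_1\in\Lambda_{m, \, n}(\mathbf d,w)$ close to $\lambda_0$, I want a frame operator with spectrum $\lambda_1$ that is close to $S_0$ in the spectral norm. I would fix an orthonormal eigenbasis $\{v_j\}_{j\in\In}$ of $S_0$ ordered so that $S_0=\suml_{j=1}^n(\lambda_0)_j\,v_jv_j^\ast$, and set $S_1=\suml_{j=1}^n(\lambda_1)_j\,v_jv_j^\ast$. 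Since the entries of $\lambda_1$ are non-increasing, $\lambda(S_1)=\lambda_1$ and $\|S_1-S_0\|=\max_{j\in\In}|(\lambda_1)_j-(\lambda_0)_j|$. Because the set $\{S_{\cV_w}:\cV_w\in\cB_{m, \, n}(\mathbf d,w)\}$ is closed under unitary conjugation by Lemma \ref{unlem} and $S_1$ is unitarily equivalent to any frame operator with spectrum $\lambda_1$, we obtain $S_1=S_{\cV_w}$ for some $\cV_w\in\cB_{m, \, n}(\mathbf d,w)$ with $d_S(\cW_w,\cV_w)$ small. Local $d_S$-minimality then yields $g(\lambda_0)=\FP(\cW_w)\leq\FP(\cV_w)=g(\lambda_1)$, which is exactly local minimality of $\lambda_0$ on $\Lambda_{m, \, n}(\mathbf d,w)$.

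I expect the lifting step to be the main obstacle: one must realize a nearby admissible spectrum by a frame operator that is genuinely close in $d_S$, and the only tool that makes this possible is the closure under unitary conjugation, which lets us align the eigenvectors of the competitor with those of $S_0$. The possible presence of repeated eigenvalues of $S_0$ causes no difficulty, since any orthonormal eigenbasis adapted to its spectral decomposition works and the perturbation estimate $\|S_1-S_0\|=\max_{j\in\In}|(\lambda_1)_j-(\lambda_0)_j|$ is insensitive to the choice of basis inside each eigenspace.
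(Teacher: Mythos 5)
Your proof is correct and follows essentially the same route as the paper: part 1 via the unique norm-minimizer in the compact convex set $\Lambda_{m,\,n}(\mathbf d,w)$, and part 2 via the same eigenvector-alignment lifting (the paper phrases it as openness of the map $\sigma(\cW_w)=\lambda(S_{\cW_w})$, choosing $U$ with $S_{\cW_w}=U\,\mbox{\rm diag}(\lambda)\,U^*$ and realizing $U\,\mbox{\rm diag}(\mu)\,U^*$ by Lemma \ref{unlem}, which is exactly your construction of $S_1$ in an ordered eigenbasis). Your explicit handling of invertibility --- observing that $d_S\leq\big(\suml_{i\in\IM}w_i^2\big)\,d_P$ makes every $d_S$-local minimizer a $d_P$-local minimizer, so Lemma \ref{los minimos son frames} applies --- is a correct filling-in of a step the paper leaves implicit.
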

\proof
 Let $\cW_w\in \cB_{m, \, n}(\mathbf d,w)$ and notice that 
$\FP(\cW_w)=\|\lambda(S_{\cW_w})\|^2$. Since $\Lambda_{m, \, n}(\mathbf d,w)$ 
is a compact convex set, then  there exists a {\bf unique} $\lambda_0\in  \Lambda_{m, \, n}(\mathbf d,w)$ 
which minimizes the euclidean norm on $\Lambda_{m, \, n}(\mathbf d,w)$. 
Hence, if $\cW_w$ is a global minimizer of the FF-potential 
in  $\cB_{m, \, n}(\mathbf d,w)$, we can conclude that 
 $\|\lambda(S_{\cW_w})\|^2\leq \|\lambda_0\|^2$, which implies that $\lambda(S_{\cW_w})=\lambda_0\,$.

\pausa
Observe that the map $\sigma : \cB_{m, \, n}(\mathbf d,w) \to \Lambda_{m, \, n}(\mathbf d,w)$
given by $\sigma (\FS) = \la (\frs) $ is continuous with respect to the pseudo-distance 
$d_S$ of $\cB_{m, \, n}(\mathbf d,w)$. Moreover, 
$\sigma $ is an open map. 
Indeed, fix $\FS\in \cB_{m, \, n}(\mathbf d,w)$,  $\la  = \la(\frs)  \in \Lambda_{m, \, n}(\mathbf d,w)$ and 
take $\mu \in \Lambda_{m, \, n}(\mathbf d,w) $ close to $\la\,$. 
Let $U \in \matu$ such that  $\frs = U \diag {\lambda \,} U^*$. By \lema{unlem}, 
there exists $\cV_w \in \cB_{m, \, n}(\mathbf d,w)$ such that $S_{\cV_w} = U \diag {\mu\,} U^*$. 
Now observe that $d_S(\FS\, , \cV_w) =  \|\frs - S_{\cV_w} \| = \|\la - \mu\|_{\infty}\,$. 

\pausa
Therefore, if $\FS $ is a $d_S$-local minimizer of the FF-potential in $\cB_{m, \, n}(\mathbf d,w)$,  then 
$\lambda(S_{\cW_w})$ is a local minimum for the euclidean norm 
in the set $\Lambda_{m, \, n}(\mathbf d,w) \inc \R^n$. 
By a standard computation, 
the convexity of $\Lambda_{m, \, n}(\mathbf d,w)$ 
implies that 
$\lambda(S_{\cW_w})$ must be  {\it the} global minimizer $\la_0\,$, 
and therefore $\FS$ is a global minimizer in $\cB_{m, \, n}(\mathbf d,w)$. \QED

\begin{conj}
Local minimizers of the frame potential in $\cB_{m, \, n}(\mathbf d,w) $ (resp. $ \cS_{m,\, n}^1(\d)\,$) with respect to   punctual distance $d_P$ are also global minimizers.
\end{conj}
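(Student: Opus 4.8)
The plan is to reduce the statement to a question about the spectrum of the frame operator and then to attack that question by a second-order analysis of the potential along subspace rotations. First I would invoke Theorem \ref{conmutan} and Remark \ref{conmutan2}: at any $d_P$-local minimizer $\FS$ the spectral projections $P_{\mu_k}(S_{\FS})$ lie in the commutant $\{P_{W_j}:j\in\IM\}'$, so each $W_i$ splits orthogonally as $\bigoplus_k \eme_{k,i}$ with $\eme_{k,i}=W_i\cap N(S_{\FS}-\mu_k I_n)$, and on each eigenspace $H_k=N(S_{\FS}-\mu_k I_n)$ the family $(w_i,\eme_{k,i})_{i\in\IM}$ is a tight fusion frame with bound $\mu_k$. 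In particular $\FP(\FS)=\sum_k \mu_k^2\,\dim H_k=\|\lambda(S_{\FS})\|^2$, so the goal becomes to prove that $\lambda(S_{\FS})$ equals the unique global minimizer $\lambda_0$ of the Euclidean norm on the convex compact set $\Lambda_{m,n}(\d,w)$ of Lemma \ref{unlem}. Reproducing the argument of Theorem \ref{espectrae} with $d_P$ in place of $d_S$, it is enough to show that the spectral map $\sigma(\cV_w)=\lambda(S_{\cV_w})$ is \emph{open at $\FS$ with respect to $d_P$}; equivalently, that every descent direction of $\|\cdot\|^2$ at $\lambda(S_{\FS})$ inside $\Lambda_{m,n}(\d,w)$ can be realized by a $d_P$-small deformation of $\FS$.

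Second, I would supply the missing $d_P$-open moves. Deformations preserving the eigenspace decomposition are already covered, with $d_P$-control, by Corollary \ref{sec loc 2} applied on each $R(Q_k)$ as in the proof of Theorem \ref{conmutan}; these do not alter the gross spectrum. The genuinely new moves are the inter-eigenspace tilts: for a pair $\mu_k>\mu_l$ and a unit vector $e\in\eme_{k,i}$, rotate $e$ toward a unit vector $f\in H_l$ by $e(t)=\cos t\,e+\sin t\,f$, replacing $W_i$ by $W_i(t)=(W_i\ominus\gen{e})\oplus\gen{e(t)}$, a $d_P$-continuous path through $\FS$. A direct second-order expansion of \eqref{defi del fusion pot} gives
\beq\label{hess tilt}
\FP(\FS(t))-\FP(\FS)=2\,t^2\,w_i^2\,\big(w_i^2-(\mu_k-\mu_l)\big)+o(t^2)\ ,
\eeq
the first-order term vanishing since $\FS$ is a critical point. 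Thus $d_P$-local minimality forces $\mu_k-\mu_l\le w_i^2$ whenever $\eme_{k,i}\neq 0$, while a strict reverse inequality yields a $d_P$-small deformation that decreases $\FP$. The strategy is then to assemble these tilts (and their collective, multi-direction versions coupling several subspaces and several orthogonal targets) into a deformation moving $\lambda(S_{\FS})$ toward $\lambda_0$, and to show that $\lambda(S_{\FS})\neq\lambda_0$ always produces a deformation with strictly negative second-order change, contradicting minimality.

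The main obstacle is exactly the threshold $w_i^2$ in \eqref{hess tilt}. Because a subspace rotation alters $S_{\FS}$ only through off-diagonal blocks, its effect on the non-degenerate eigenvalues $\mu_k\neq\mu_l$ is second order in $t$, of the same order as the self-energy cost $2t^2w_i^4$; the equalizing gain and the cost compete at the same order instead of the gain dominating. When all relevant gaps are small compared with the weights, no \emph{single} tilt is a descent direction even though the spectrum need not be optimal. Overcoming this requires either (i) proving that the full Hessian of $\FP$ over all admissible collective rotations, together with the rigid tight-frame structure of the blocks $(w_i,\eme_{k,i})$ and the active Horn--Klyachko constraints characterizing $\lambda_0$, cannot be positive semidefinite unless $\lambda(S_{\FS})=\lambda_0$; or (ii) establishing $d_P$-openness of $\sigma$ in the equalizing directions by a cross-section argument that mixes eigenspaces, which is delicate precisely because such mixing fails to be $d_P$-small unless the eigenvalues are close. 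Reconciling the finer $d_P$-topology with the inherently second-order spectral response of subspace rotations is the crux, and is what keeps the statement at the level of a conjecture.

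For the second setting $\cS_{m,n}^1(\d)$, where the weights also vary, the same reduction applies: Theorem \ref{conmutan} again yields the eigenspace/tight-frame structure, and the fixed-dimension analogue of Theorem \ref{espectrae} supplies a convex spectral set whose unique norm-minimizer is the target. Here the admissible $d_P$-deformations include perturbations of $w$, which only enlarges the family of test directions; I would therefore expect the fixed-dimension case to follow by the same argument once the fixed-weight obstacle above is resolved.
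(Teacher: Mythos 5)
The statement you are asked to prove is labelled a \emph{Conjecture} in the paper, and the paper contains no proof of it: the authors only remark that affirmative answers are known for particular choices of the parameters $n$, $m$, $\mathbf d$, $w$ (citing \cite[Theorem 5]{CF}). So there is no internal proof to compare against, and your proposal, by your own admission in its third paragraph, is a program rather than a proof: neither alternative (i) (the Hessian of $\FP$ over all collective rotations cannot be positive semidefinite unless $\lambda(S_{\FS})=\lambda_0$) nor alternative (ii) ($d_P$-openness of the spectral map $\sigma$ in the equalizing directions) is actually carried out, and the conjecture remains open after your attempt. That said, your analysis is sound as far as it goes, and it isolates the obstruction correctly. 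The reduction via Theorem \ref{conmutan} and Remark \ref{conmutan2} to the question of whether $\lambda(S_{\FS})$ must equal the unique norm-minimizer $\lambda_0$ of the convex set $\Lambda_{m,\,n}(\mathbf d,w)$ of Lemma \ref{unlem} is the natural route, and your second-order formula is correct --- indeed exact: taking $e\in \eme_{k,i}$ and a unit $f\in H_l\ominus \eme_{l,i}$ (so that $f\perp W_i\,$, which is what makes $W_i(t)$ well defined and the computation clean), one gets
$$
\FP(\FS(t))-\FP(\FS)\;=\;2\,\sin^2 t\;w_i^2\,\bigl(\,w_i^2-(\mu_k-\mu_l)\,\bigr)\ ,
$$
so a single tilt is a descent direction only when some spectral gap exceeds $w_i^2\,$, exactly the threshold you identify.

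The concrete gap is the one you name but do not close: the proof of Theorem \ref{espectrae} works because, for the pseudo-distance $d_S\,$, openness of $\sigma$ is free --- Lemma \ref{unlem} realizes any nearby spectrum $\mu$ as $S_{\cV_w}=U\diag{\mu}U^*$ with the \emph{same} unitary $U$, which controls $\|S_{\FS}-S_{\cV_w}\|$ but gives no control whatsoever on $\max_i\|P_{W_i}-P_{V_i}\|$. In the $d_P$ topology the inter-eigenspace mixing needed to move the spectrum toward $\lambda_0$ costs at second order exactly what it gains, as your formula shows, so when all gaps satisfy $\mu_k-\mu_l\le w_i^2$ no single tilt descends even though $\lambda(S_{\FS})$ may differ from $\lambda_0\,$; you would need to show that in that regime either some multi-direction collective rotation still descends, or that the inequalities $\mu_k-\mu_l\le w_i^2$ (for all $i,k,l$ with $\eme_{k,i}\neq 0$ and room in $H_l$) combined with the tight-frame structure of the blocks and the Horn--Klyachko constraints force $\lambda(S_{\FS})=\lambda_0\,$. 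Neither implication is established, and nothing in the paper's toolkit (in particular Corollary \ref{sec loc 2}, whose cross sections exist only under the irreducibility hypothesis $\{P_{W_j}\}'=\C I_n\,$, which fails precisely when $S_{\FS}$ has several eigenvalues) supplies it. Your honest placement of the difficulty is accurate; just be aware that what you have written is a correct reduction plus a correct obstruction analysis, not a proof.
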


\noi
In some particular cases (i.e. for particular choices of the parameters  $n$, $m$, $\d$ and $w$), 
there is an affirmative answer for this conjecture (see \cite[Theorem 5]{CF}).

\begin{rem}\label{pa no repetir}
All the previous results remain true if one replaces $\cB_{m, \, n}(\mathbf d,w) $ by $\cB_{m ,\, n}^1(\d)$  
(i.e.,  minimizing without fixing the sequence of weights). We present some of the new statements 
 without proofs,  since they
  are based on  techniques that are similar to those already developed.
\begin{enumerate}
\item As in the proof Lemma \ref{unlem}, Horn-Klyachko's compatibility inequalities 
\eqref{comp ec} show that 
$$
\Lambda_{m, \, n}^1(\mathbf d)=\{\lambda(\frs):\ \cW_w\in \cB_{m, \, n}^1(\d)\} \peso{is compact and convex.}
$$
\item This fact implies that the (ordered) spectra of the frame operators of global 
minimizers of the FF-potential in $\cB_{m, \, n}^1(\d)$ coincide.
\item Finally, the argument of the proof of \teor{espectrae} can be adapted to yield 
that local minimizers of the FF-potential in $\cB_{m, \, n}^1(\d)$,  with respect to the 
operator distance $d_S\,$, are also global. \EOE
\end{enumerate}
\end{rem}

\pausa
As in the case of fixed weights (\lema{los minimos son frames}), 
the global minimizers for the FF-potential in  $\cB_{m\,,n}^1(\d)$ are fusion frames.
\begin{pro}\label{pa no rep norma 1}
Let $\FS$ a $d_S$-local (and hence global) 
minimizer for the FF-potential in $\cB_{m, \, n}^1(\d)$. Then its frame operator
$S_{\FS} \in \matinv^+$. In other words, $\FS\in \cS_{m, \, n}^1(\d)$. 
\end{pro}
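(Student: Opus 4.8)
The plan is to reduce the statement to the fixed-weight case already treated in this section, rather than to rerun a perturbation argument. Let $w = (w_i)_{i\in\IM}\in \R_{>0}^m$ denote the weights of the given minimizer $\FS = (w_i, W_i)_{i\in\IM}$. Since $\FS \in \cB_{m, \, n}^1(\d)$, we have $\suml_{i\in\IM} w_i^2\, d_i = \tr S_{\FS} = 1$, so $(\d, w)$ is a normalized pair and, by the last item of Notation \ref{Bmn}, the fixed-weight slice satisfies $\cB_{m, \, n}(\d, w) \inc \cB_{m, \, n}^1(\d)$; moreover $\FS$ itself lies in this slice.

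First I would transfer the local minimality from the ambient set down to the slice. Since $\cB_{m, \, n}(\d, w)\inc \cB_{m, \, n}^1(\d)$ and $\FS\in \cB_{m, \, n}(\d, w)$, a $d_S$-neighborhood of $\FS$ witnessing minimality in $\cB_{m, \, n}^1(\d)$ restricts to one witnessing minimality in the smaller set. Hence $\FS$ is a $d_S$-local minimizer of the FF-potential within $\cB_{m, \, n}(\d, w)$.

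Then I would invoke the fixed-weight result. As $(\d, w)$ is normalized, the second assertion of Theorem \ref{espectrae} applies and shows that every $d_S$-local minimizer of the FF-potential in $\cB_{m, \, n}(\d, w)$ lies in $\cS_{m, \, n}(\d, w)$, hence has an invertible frame operator. Since $\cB_{m, \, n}(\d, w)\inc \cB_{m, \, n}^1(\d)$, one has $\cS_{m, \, n}(\d, w)\inc \cS_{m, \, n}^1(\d)$, and therefore $S_{\FS}\in \matinv^+$ with $\FS\in \cS_{m, \, n}^1(\d)$, as claimed.

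I do not expect a genuine obstacle, since the analytic work is already contained in Lemma \ref{los minimos son frames} and Theorem \ref{espectrae}. The single point requiring care is the legitimacy of restricting to the slice $\cB_{m, \, n}(\d, w)$: this is exactly where the normalization of $(\d, w)$ enters, guaranteeing $\cB_{m, \, n}(\d, w)\inc \cB_{m, \, n}^1(\d)$, so that minimality is preserved under the restriction. Should a self-contained proof be preferred, the perturbation of Lemma \ref{los minimos son frames} — rotating a single subspace $W_i$ toward a vector in $N(S_{\FS})$ — can be used directly: it fixes both the weights and all the dimensions, hence remains in $\cB_{m, \, n}^1(\d)$, strictly lowers the potential, and converges to $\FS$ in $d_P$ and therefore in $d_S$ (using $d_S \le (\sum_{i} w_i^2)\, d_P$ on the slice), contradicting the assumed $d_S$-local minimality.
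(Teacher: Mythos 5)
Your reduction to the fixed-weight slice is sound as far as it goes, but it silently assumes that the minimizer has full support: you declare $w=(w_i)_{i\in\IM}\in\R_{>0}^m$ at the outset, and both Notation \ref{Bmn} (the slice $\cB_{m,\,n}(\d,v)$ is only defined for $v\in\R_{>0}^m$) and Theorem \ref{espectrae} require strictly positive weights. In this proposition the weights are among the variables being minimized, and a $d_S$-minimizer in $\cB_{m,\,n}^1(\d)$ may well have $w_i=0$ for some $i$ --- the paper's own proof begins precisely by setting $J=\{i\in\IM : w_i\neq 0\}$, and this degeneracy is real: admitting vanishing weights is what makes $\Lambda^1_{m,\,n}(\d)$ compact (Remark \ref{pa no repetir}), hence what guarantees global minimizers exist at all, and Section 4 shows optimal weights genuinely can vanish (Example \ref{coco}). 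Your fallback argument via the perturbation of Lemma \ref{los minimos son frames} suffers from the same restriction: that lemma uses $\tr \d\geq n$ to produce a pair $i\neq j$ with $w_i\,w_j>0$ and $P_{W_i}P_{W_j}\neq 0$, which for the supported sub-family only works when $k:=\suml_{i\in J} d_i\geq n$; if $k<n$ the supported subspaces can be pairwise orthogonal and the rotation toward $N(S_{\FS})$ does not strictly decrease the potential.

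The case $k<n$ is exactly where the content of the proposition lies, and no fixed-weight argument can settle it: every Bessel sequence supported in $J$ has frame operator of rank at most $k<n$, so within any fixed-weight slice $\FS$ can perfectly well be a local (even global) minimizer with non-invertible frame operator, and no contradiction is available there. The paper's proof therefore varies the weights. It first shows that minimality forces the supported subspaces to be mutually orthogonal with $w_i^2=\frac{1}{k}$, so $\FP(\FS)=\frac{1}{k}$; it then checks $k+d>n$ for $d=d_{r+1}$ (otherwise an orthogonal configuration on $r+1$ subspaces has potential $\frac{1}{k+d}<\frac{1}{k}$, contradicting global minimality, which is available since $d_S$-local minimizers are global by Remark \ref{pa no repetir}); finally it activates the erased subspace along the path $v_j^{\,2}=a$ for $j\leq r$, $v_{r+1}^{\,2}=b$, with $ka+db=1$ and $V_{r+1}$ built from the orthogonal complement plus a subspace $T$ of dimension $d+k-n$, obtaining $f(a)=ka^2+db^2+2(d+k-n)ab$ with $f(\frac{1}{k})=\frac{1}{k}$ and $f'(\frac{1}{k})=\frac{2(n-k)}{d}>0$, so that nearby (in $d_S$) competitors have strictly smaller potential. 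This weight-redistribution step is the heart of the proof and is missing from your proposal, so as written it has a genuine gap; it becomes a correct (and shorter) proof only under the extra hypothesis that the minimizer's weights are all nonzero, i.e.\ $k\geq n$.
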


\proof
Let $J = \{i \in \IM : w_i \ne 0\}$, and $k = \suml_{ i \in J} d_i \,$. 
Note that, if $k\geq n$, we can apply \lema{los minimos son frames} (fixing the weight $w_J$) 
and we are done.

\pausa
We assume that $k < n$, and will obtain a contradiction. 
Without loss of generality, we can suppose that $J = \IN{r}\,$. 
It follows immediately that  $W_i\perp  W_j$ for 
$1\leq i\neq j\leq r$ and $w_i^2=\frac{1}{k}$  for every $1\leq i\leq r$. 
Thus, $\FP(\FS)=\frac{1}{k}\ $.
Moreover, if $d=d_{r+1}\,$, then $k+d>n$. Otherwise, if we take a subspace $W_{r+1}
\inc \big(\, \bigcup\limits\subir W_i\, \big)^\perp $ with $\dim W_{r+1}=d$ 
and we set $w_i^2=\frac{1}{k+d} \, $, for $i \in \IN{r+1}\,$, 
then we get a BSS in $\cB_{m, \, n}^1(\d)$ with FF-potential $\frac{1}{k+d}\, < \, \frac{1}{k} \ $.

\pausa
Therefore,  we can construct $\cV_v=(v_j\,, \, V_j)_{j\in \IN{r+1}}
\in \cB_{m, \, n}^1(\mathbf d)$ in the following way:
$$
v_j^{\,2} = \begin{cases} 
   a   & \mbox{if } 1\leq j\leq r \\
   b & \mbox{if }j=r+1  \end{cases}
    \peso {and} 
V_ j = 
\begin{cases} 
  \quad\quad \quad W_j   & \mbox{if } 1\leq j\leq r \\
   \big[ \,\bigoplus\limits_{i=1}^r W_i  \, \big]^\perp \,  \oplus \, T & \mbox{if }j=r+1  
  \ \, ,
\end{cases}
$$
where $T \inc \bigoplus_{i=1}^r W_i $ 
is a  subspace  with $\dim T=d+k -n$ (so that $\dim V_{r+1} = d$), 
and $k a + d b = 1$. 
It is easy to see, by taking an orthonormal basis of each subspace $V_i\,$, that
\beq 
\label{el FF del V}
\barr{rl}
f(a) &= \FP(\cV_v)= k a^2  + d b^2 + 2 (d+k-n) ab 
\peso{with} k a + d b = 1 \ \ , \ \ \ a \in \left[\, 0 \, ,  \, \frac {1}{\, k} \  \right]
\ \ . \earr
\eeq 
Easy computations show that $ f'(\frac {1}{\, k} ) = \ds\frac{2( n-k)}{d} > 0$. 
Since $f(\frac {1}{\, k}) = \frac {1}{\, k} \ $, there exist pairs $(a, b) $ such that 
the FF-potential is lower that $\frac {1}{\, k}\ $, which contradicts the minimality of $\FS$.
\QED

\pausa
Next, we summarize the facts described in Remark \ref{pa no repetir} and Proposition \ref{pa no rep norma 1}.

\begin{teo}\label{lo mesmo}\rm
Let $\mathbf d\in  \N^m$. 
Then, local minimizers of the frame potential in 
$\cB_{m, \, n}^1(\d)$ with respect to the pseudo-distance $d_S$ lie in $\cS_{m, \, n}^1(\d)$ and
are also global minimizers. Moreover, the spectra of the frame operators of these local minimizers 
coincide. \QED
\end{teo}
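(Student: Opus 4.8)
The plan is to assemble \teor{lo mesmo} directly from the two facts collected just before it, adapting verbatim the argument used for the fixed-weight case in \teor{espectrae}. First I would record that the set of ordered spectra $\Lambda_{m, \, n}^1(\mathbf d)=\{\la(\frs):\ \cW_w\in \cB_{m, \, n}^1(\d)\}$ is compact and convex, as stated in item 1 of \obse{pa no repetir}. Convexity follows by repeating the computation of \lema{unlem}: a convex combination $\la = t\,\la_1+(1-t)\,\la_2$ of two admissible spectra is again non-increasing, and since the right-hand side of the Horn-Klyachko inequalities \eqref{comp ec} is linear in the spectral data, $\la$ still satisfies those inequalities; hence by \teor{teoK} it is realized as $\la(S_{\cV_w})$ for some $\cV_w\in \cB_{m, \, n}^1(\d)$. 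Compactness is inherited from the compactness of the Grassmann manifolds and of the weight-constraint set $\{w\in\R_{\ge 0}^m:\ \suml_{i\in\IM}w_i^2 d_i=1\}$, via continuity of the spectrum map.

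Second, using that $\FP(\cW_w)=\|\la(S_{\cW_w})\|^2$ is the restriction of the strictly convex map $\la\mapsto\|\la\|^2$ to the compact convex set $\Lambda_{m, \, n}^1(\mathbf d)$, there is a \emph{unique} minimizer $\la_0\in\Lambda_{m, \, n}^1(\mathbf d)$. As in \teor{espectrae}, the spectrum map $\sigma(\FS)=\la(\frs)$ is continuous and open for the pseudo-distance $d_S$: given $\FS$ with $\frs=U\diag{\la}U^*$ and $\mu\in\Lambda_{m, \, n}^1(\mathbf d)$ close to $\la$, realizability produces $\cV_w\in\cB_{m, \, n}^1(\d)$ with $S_{\cV_w}=U\diag{\mu}U^*$, so that $d_S(\FS,\cV_w)=\|\la-\mu\|_\infty$. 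Consequently a $d_S$-local minimizer of the FF-potential maps to a local minimizer of $\|\cdot\|^2$ on the convex set $\Lambda_{m, \, n}^1(\mathbf d)$; strict convexity forces it to equal the global minimizer $\la_0$. This simultaneously yields that every $d_S$-local minimizer is global and that all their spectra coincide.

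Finally I would invoke \prop{pa no rep norma 1} to upgrade ``global minimizer'' to ``element of $\cS_{m, \, n}^1(\d)$'', that is, to conclude invertibility of the frame operator. The one genuinely new obstacle relative to the fixed-weight setting --- and the reason \prop{pa no rep norma 1} is needed as a separate step --- is that varying weights allows degenerate sequences with some $w_i=0$; such a configuration could a priori have singular frame operator and yet be locally optimal, and this possibility is excluded by the explicit perturbation in \prop{pa no rep norma 1} that strictly lowers the potential by activating one additional subspace. Combining the three steps gives the full statement.
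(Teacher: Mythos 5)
Your proposal is correct and matches the paper's intended argument exactly: the paper states this theorem as a summary of Remark \ref{pa no repetir} (convexity and compactness of $\Lambda_{m,\,n}^1(\mathbf d)$ via Horn--Klyachko, plus the adapted openness-of-the-spectrum-map argument from Theorem \ref{espectrae}) together with Proposition \ref{pa no rep norma 1} for invertibility, which is precisely your three-step assembly. The only point worth making explicit when you say the Horn--Klyachko right-hand side is ``linear in the spectral data'' is that a convex combination of the flat weight-spectra is again of that form with weights $v_i^2 = t\,w_i^2 + (1-t)\,(w_i')^2$ satisfying $\suml_{i\in\IM} v_i^2\, d_i = 1$, so the combined spectrum is realized in $\cB_{m,\,n}^1(\d)$ with these new admissible weights.
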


\section{Minimization for fixed subspaces}
In this section we shall characterize the sequences of weights which minimize 
the potential of a fixed sequence of subspaces. The main tools are some results
about Hadamard indexes of \cite{[CS]}, which we shall state in some 
detail in the Appendix. Recall that, for $A, B\in \mat$, their 
Hadamard product is the matrix $A\circ B = (A_{ij} \, B_{ij} ) _{i\, , \, j \in \In}\, \in \mat$.

 \subsection{Minimal weights}
In this section we fix $m \in \N$, a Hilbert space $\hil$ with $\dim \hil =n$, 
and $\d\in \N^m$ with $M = \tr \, \d \ge n$. We also 
fix a sequence  $\cW=\{W_i\}_{i \in \IM}\, $ of subspaces which spans $\hil$, such that each 
$\dim W_i = d_i\,$. 
Our aim is to minimize the FF-potential over all sequences $w\in \R_+ ^m$ such that 
$\FS \in \cS_{m, \, n}^1(\d)$.

\pausa Recall that the Benedetto-Fickus FF-potential of $\cW_w$ is given by:
\begin{equation}\label{defi del fusion pot2} \FP(\mathcal \cW_w)=\suml_{i,j=1}^m
w_i^2w_j^2\tr(P_{W_i}P_{W_j}) = \tr S_{\cW_w}^{\, 2} \ .
\end{equation}

\begin{fed}\rm
Let $\msfram \in \cS_{m, \, n}^1(\d)$. We denote by  
\beq 
\peso{$B= B_{\FS} \in \matmreal$  \ \  the matrix  given by 
\ \ \ $B_{ij}=w_i^2w_j^2\tr(P_{W_i}P_{W_j})$ .} \EOEP
\eeq
\end{fed}

\begin{lem}
Let $\msfram$ be a FF for  $\hil$. Then, $B_{\FS}\in \matmreal^+$. 
\end{lem}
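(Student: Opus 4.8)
The plan is to recognize $B_{\FS}$ as a Gram matrix for the Frobenius inner product $\api A,B\cpi = \tr B^*A$ on $\mat$, and then to invoke the elementary fact that every Gram matrix is positive semidefinite.

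First I would check that $B$ genuinely lies in $\matmreal$ and is symmetric. Since each $P_{W_i}$ is hermitian and the trace is cyclic, $\overline{\tr(P_{W_i}P_{W_j})} = \tr\big((P_{W_i}P_{W_j})^*\big) = \tr(P_{W_j}P_{W_i}) = \tr(P_{W_i}P_{W_j})$, so every entry $B_{ij} = w_i^2 w_j^2 \tr(P_{W_i}P_{W_j})$ is real; the symmetry $B_{ij}=B_{ji}$ follows from the same cyclic reordering together with $w_i^2w_j^2 = w_j^2 w_i^2$.

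The key step is to set $v_i = w_i^2\,P_{W_i} \in \matsa$ for each $i \in \IM$. Because each $v_i$ is hermitian, $\api v_i, v_j\cpi = \tr(v_j^* v_i) = w_i^2 w_j^2 \tr(P_{W_j}P_{W_i}) = B_{ij}$, so $B$ is exactly the Gram matrix of the family $\{v_i\}_{i\in\IM}$ inside the real inner product space $(\matsa, \api\cdot\,,\,\cdot\cpi)$. Positivity is then immediate: since $B$ is real and symmetric it suffices to test real vectors, and for any $c = (c_i)_{i\in\IM} \in \RR^m$, putting $A = \suml_{i\in\IM} c_i\, v_i \in \matsa$ we obtain
$$\suml_{i,j\in\IM} c_i\, B_{ij}\, c_j = \suml_{i,j\in\IM} c_i c_j \, \tr(v_i v_j) = \tr(A^2) = \tr(A^*A) = \|A\|_2^2 \ \ge\ 0 \ ,$$
which gives $B \in \matmreal^+$.

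This argument is essentially a reformulation and presents no real obstacle; the only points requiring a little care are the reality and symmetry of the entries (handled by cyclicity of the trace) and the hermiticity of the $v_i$, which is precisely what lets the quadratic form collapse into the manifestly nonnegative expression $\tr(A^*A) = \|A\|_2^2$. No earlier result from the excerpt is needed beyond the definitions of the orthogonal projections $P_{W_i}$ and of the Frobenius inner product on $\mat$.
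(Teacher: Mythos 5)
Your proof is correct and follows exactly the paper's approach: the paper also observes that $B_{\FS}$ is the Gram matrix of the family $\{w_i^2\,P_{W_i}\}_{i\in\IM}$ in $\mat$ with the inner product $\pint{X}{Y}=\tr\, Y^*X$. You merely spell out the details (reality and symmetry of the entries, and the explicit nonnegativity of the quadratic form via $\tr(A^*A)=\|A\|_2^2$) that the paper leaves implicit.
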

\begin{proof}
$B_{\FS}\in \matmreal^+$ because it is the Gram matrix for the vectors 
$\{w_i^2P_{W_i}\}_{i \in \IM}\, $  in the euclidean space $\matrec{n}$ with the inner 
product defined as $\pint{X}{Y}=\tr \, Y^*X$. 
\end{proof}

\begin{Not}\label{notat}
We shall fix some notations and assumptions:
\ben 
\item
We begin with a fixed normalized sequence of weights, in the sense that 
 $$
 \peso{$\mathbf w=(w_i)_{i \in \IM}\, $  \ \ is given by} 
 w_i= d_i \mrai   \peso{for every} i\in \IN{m} \ .
 $$
Observe that 
the condition $w_i=d_i \mrai $ means that each ``vector" $w_i \, P_{W_i} $ of $\FS$ has 
size $ \|w_i \, P_{W_i}\|_{_2}=1$. 
This justifies the 
word ``normalized" of  $\FS$.
\item
Given  a sequence of weights $\mathbf a=(a_i)_{i \in \IM}\,  \in \RR_+^m$, we denote by 
$\mathbf a\cdot \FS$ the Bessel sequence of subspaces ${\bf a} \cdot \FS=(a_i\,w_i\, W_i)_{i \in \IM}\, $. 
\item 
In $\mathbf a\in \RR^m_+  \,$, then  $\tr(S_{\mathbf a \cdot \FS})=\suml_{i \in \IM}\, a_i^2 w_i^2d_i\,$.
Therefore, as we start with normalized weights, 
\beq\label{norma uno}
\pa \cdot  \FS \in \cS_{m, \, n}^1(\d) \ \iff \ \suml_{i \in \IM}\, a_i^2 w_i^2d_i = 
\suml_{i \in \IM}\, a_i^2 = 1 \ \iff \ 
 \|\mathbf a\|=1  \ .
\eeq
\item Let  $A= A_{\FS}\in \matmreal$ the matrix given by $A_{ij}= 
(B_{\FS})_{ij} \rai = w_i \, w_j \, \tr(P_{W_i}P_{W_j})\rai $, for $i,j\in \IM\,$. 
Observe that 
$A$ is selfadjoint, but possibly $A \notin \matmreal^+$. On the other hand, 
\beq \label{FPA}
\barr {rl} 
\FP( \FS) &  =\suml_{i,j=1}^m w_i^2 \, w_j^2 \, \tr(P_{W_i}P_{W_j})= \| A_{\FS} \|_{_2}^2 
\peso{and also}  \\ 
\FP(\mathbf a\cdot \FS)& =\suml_{i,j=1}^m a_i^2\, a_j^2\,
w_i^2\, w_j^2\, \tr(P_{W_i}P_{W_j})= \| \pa  \pa ^*  \circ A_{\FS}\|_{_2}^2 \ . 
\earr 
\eeq
\item  Using the previous identities, we can now define the main notion of this section:
\beq
I(\FS) : =\min\limits_{\|a\|=1} \FP(\mathbf a\cdot \FS)  = 
\min\limits_{\|a\|=1} \| \pa  \pa ^*  \circ A_{\FS} \|_{_2}^2   \ . \EOEP
\eeq
\een
\end{Not} 

 \pausa
 In order to compute $I(\FS)$ as well as to describe the set of weights $\mathbf a\in \R_{\geq 0}^m$, with $\|\mathbf \pa\|=1$ for which $I(\FS)=  \FP(\mathbf a\cdot \FS)$,  
the main tools are some results about Hadamard indexes of \cite{[CS]}, which we shall state in some 
detail in the Appendix. Here we just give  the basic definitions. 

\begin{fed}\label{IND1}\rm
Let $B\in \cM_m(\C )^+$.
The {\bf minimal}-Hadamard index of $G$ is 
\[
I(B)=\max \ \{  \ \lambda \geq 0\,  : \, B\circ C\geq \lambda C \peso{for every}  C \in \cM_m(\C )^+\} \ .
\]
For $A  \in \matm_{sa}\,$, we define the spectral and the $\| \cdot \|_{_2} \ $ Hadamard  indexes:
$$
I_{sp} (A) = \min_{\|x\|=1} \|A\circ x \, x^* \|_{sp}  \peso{and} 
I_{2} (A) = \min_{\|x\|=1} \|A\circ x \, x^* \|_{_2} \ .
$$
For a matrix $G\in \mat $, we write  $0\leqp G $ if  all entries $G_{i,j} \ge 0$. Given 
$J \inc \IN{m}$ with $|J| = k$ we denote by $G_J\in \matrec{k}$ the submatrix of $G$
given by $G_J = \big( G_{ij}\big)_{i,j\in J}\,$. Similarly, if $x \in \cene$, 
we write $x\geqp 0$ if $x \in \R_+^m$ and $x_J = (x_j)_{j\in J}  \in \C^k$.
\EOE
\end{fed}

\pausa
From the previous definitions and Eq. \eqref{FPA}, 
 we get the fundamental equality: $I(\FS)=I_2(A_{\cW_w})$. Now it is clear why the results of the Appendix can be useful for computing $I(\cW_w)$.

\begin{rem}\label{rem clari}
Let  $\mathbf a\in \RR^m$. Then
\begin{equation}\label{eq obs potencial}
\FP(\mathbf a\cdot \FS)=\suml_{i,j=1}^m a_i^2\, a_j^2\,
w_i^2w_j^2\tr(P_iP_j)=\big\langle (B\circ \mathbf a\, \mathbf a^*) \,
\mathbf a, \mathbf a\big\rangle=\big\langle B (\mathbf a \circ  \mathbf a^*)
,(\mathbf a \circ  \mathbf a^*)\big\rangle
\end{equation} 
where, as before,  $B = B_{\FS} \in \matmreal^+$. 
Moreover, by Eq. \eqref{nopos} in Proposition \ref{variosIB}, 
\begin{equation}\label{clarividencia}
I(\FS)=\min_{\|\pa\|=1}\|A_{\FS}\circ \pa\,\pa^*\|_{_2}
^2=\min_{\|\pa\|=1}\|B\circ \pa\,\pa^*\| = I_{sp}(B) \ .
\end{equation}
This identity is useful because $0\leqp B \in \matmreal^+$ and its spectral index is easier to compute. 
Indeed, observe that $\tr \, P_{W_i} \, P_{W_j} \ge 0$ for every $i,j \in \IM \, $, 
since each $P_{W_i} \in \matpos$.  \EOE
\end{rem}

\subsection{Critical Points and local minimizers.} 
In what follows, we shall use all the assumptions and notations of the previous subsection, but we need the following extra notations: 
\ben
\item Given $\ba\in \RR^m$, we write $z = \ba \circ \pa = (a_1^2 \, , \dots , a_m^2)$
and $J = \sop \{\pa \} = \{ j \in \In : a_j \neq 0\}$.
\item Let $L:\RR^m \rightarrow  \R^+$ be given
by $L(\pa )=\FP(\mathbf a\cdot \FS)=\|A_{\FS}\circ \ba
\ba^*\|_2^2=\pint{B_{\FS}z}{ z}$.
\item We consider the affine manifold $\Delta_0 = \{x\in \RR^m: \tr x=0\}$ and the compact convex 
simplex $\Delta = \{ x\in \Delta _0\, : \, x\geqp 0\}$. 
\item $ S^{m-1} = \{x\in \RR^m: \|x\|=1\}$ is the unit sphere of $\R^m$. 
\een

\begin{lem} \label{RSPC}
Let $0\leqpi \ba\in S^{m-1} $.  Denote  $B = B_{\FS}\,$. 
The following conditions
are equivalent: \ben
\item $\pa$ is a critical point for the map $L$
restricted to the sphere  $ S^{m-1}$.
\item $B_Jz_J=I(B_J)\uno_J \,$, where $J=\sop \{\ba\}\,$.
\een
\end{lem}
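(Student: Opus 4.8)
The plan is to read off the critical point equation by Lagrange multipliers, and then to recognize the resulting multiplier as the Hadamard index using that $L$, viewed as a function of $z=\pa\circ\pa$, is convex.

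First I would differentiate. Since $L(\pa)=\pint{Bz}{z}=\suml_{i,j=1}^m B_{ij}\,a_i^2\,a_j^2$ with $B=B_{\FS}=B^*$, a direct computation gives $\partial L/\partial a_k = 4\,a_k\,(Bz)_k$ for each $k\in\In$. The constraint defining $S^{m-1}$ is $\suml_k a_k^2=1$, with gradient $2\,a_k$, so $\pa$ is a critical point of $L$ restricted to $S^{m-1}$ exactly when there is $\mu\in\RR$ with $4\,a_k(Bz)_k=2\mu\,a_k$ for all $k$. For $k\notin J=\sop\{\pa\}$ this is automatic because $a_k=0$, while for $k\in J$ it reduces to $(Bz)_k=\mu/2=:\lambda$. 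As $z_j=0$ for $j\notin J$, one has $(Bz)_k=(B_{J}\,z_{J})_k$ for $k\in J$; hence criticality is equivalent to $B_{J}\,z_{J}=\lambda\,\uno_{J}$ for some $\lambda\ge 0$ (nonnegativity is clear since $B$ and $z$ have nonnegative entries). This already yields the equivalence modulo the identity $\lambda=I(B_{J})$, which is the heart of the matter.

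To pin down $\lambda$, I would first pair $B_{J}z_{J}=\lambda\uno_{J}$ with $z_{J}$ and use $\tr z_{J}=\|\pa\|^2=1$ to get $\lambda=\pint{B_{J}z_{J}}{z_{J}}$. Then I would consider $\phi(y)=\pint{B_{J} y}{y}$ on the simplex $\Delta_{J}=\{\,y\in\RR^J:\ y\geqp 0,\ \tr y=1\,\}$. Because $B_{J}$ is a principal submatrix of $B_{\FS}\in\matmreal^+$, it is positive semidefinite, so $\phi$ is convex on $\Delta_{J}$. The relation $B_{J}z_{J}=\lambda\uno_{J}$ says exactly that $z_{J}$ is a critical point of $\phi$ on $\Delta_{J}$ lying in its relative interior, since $J$ is precisely the support of $\pa$ and thus $z_{J}$ has all coordinates strictly positive. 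A relative-interior critical point of a convex function on a convex set is a global minimizer, so $\lambda=\phi(z_{J})=\min_{y\in\Delta_{J}}\pint{B_{J} y}{y}=I(B_{J})$, the last equality being the variational description of the Hadamard index from the Appendix. The converse is immediate: if $B_{J}z_{J}=I(B_{J})\uno_{J}$ with $z_{J}$ strictly positive on $J$, then $(Bz)_k=I(B_{J})$ for every $k\in J$, and the Lagrange identity holds with $\mu=2\,I(B_{J})$, so $\pa$ is critical.

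The main obstacle is the equality $\min_{y\in\Delta_{J}}\pint{B_{J} y}{y}=I(B_{J})$, which is exactly where the Appendix material on Hadamard indexes is needed. I would establish it through the chain $I(B_{J})\le\min_{y\in\Delta_{J}}\pint{B_{J} y}{y}\le\min_{\|y\|=1}\|B_{J}\circ yy^*\|_{sp}=I_{sp}(B_{J})=I(B_{J})$: the first inequality follows by testing the defining relation $B_{J}\circ C\ge I(B_{J})\,C$ against $C=\sqrt{y}\,\sqrt{y}^{\,*}$ and pairing with $\sqrt{y}$ (for every $y\in\Delta_{J}$); the middle inequality from $\|B_{J}\circ yy^*\|_{sp}\ge\pint{B_{J}(y\circ y)}{y\circ y}$; and the final coincidence $I_{sp}(B_{J})=I(B_{J})$ for $0\leqp B_{J}\in\matmreal^+$ is recorded in \eqref{clarividencia} and \prop{variosIB}. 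A secondary point to treat with care is that the interior critical point must be a global, not merely local, minimizer, which is guaranteed precisely by the convexity of $\phi$ together with the full support of $z_{J}$ on $J$.
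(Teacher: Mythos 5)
Your first half is correct and coincides with the paper's argument: the paper also computes $\nabla L(\pa)=4\,(Bz\circ\pa)$, identifies the tangent space of $S^{m-1}$ at $\pa$ with $\{\pa\}^\perp$, and reduces criticality to $B_Jz_J=\la\,\uno_J$ for some scalar $\la$; your converse direction is also fine.

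The gap is in how you pin down $\la=I(B_J)$. The final link of your chain --- the claim that $I_{sp}(B_J)=I(B_J)$ holds unconditionally ``for $0\leqp B_J\in\matmreal^+$'' --- is false, and neither of your citations records it: Eq.~\eqref{clarividencia} relates $I_2(A_{\FS})$ to $I_{sp}(B)$, not $I(B)$ to $I_{sp}(B)$, and Proposition \ref{variosIB} gives only the one-sided inequality $I(G)\le I_N(G)$. The equality $I_{sp}(G)=I(G)$ is exactly the content of Proposition \ref{3.3}, and it is an \emph{iff}: it holds precisely when $Gu=\uno$ admits an entrywise nonnegative solution. A concrete counterexample to your unconditional claim is $G=\bm{cc} 5&2\\2&1\em$, which is positive definite with $0\leqp G$: here $Gy=\uno$ for $y=(-1,3)$, so $I(G)^{-1}=\tr y=2$ by Proposition \ref{variosIB}, i.e.\ $I(G)=\tfrac12$, whereas $\min_{y\in\Delta}\pint{Gy}{y}=1$ (attained at the vertex $e_2$; minimize $2y_2^2-6y_2+5$ on $[0,1]$). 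Since the other two links of your chain are valid, this also forces $I_{sp}(G)\ge 1>I(G)$. Consequently your ``main obstacle'' identity $\min_{y\in\Delta_J}\pint{B_Jy}{y}=I(B_J)$ is false as a general statement; in general only $I(B_J)\le\min_{\Delta_J}$ holds, and the minimum over the simplex is tied to $I_{sp}(B_J)$, which can be strictly larger. (Note that Proposition \ref{variosIB} describes $I$ variationally via the \emph{affine hyperplane} $\{\,z:\sum_i z_i=1\,\}$, not the simplex; the two minima differ exactly when the hyperplane minimizer has negative entries.)

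The gap is repairable in your setting, because by the time you invoke the identity you already hold the witness Proposition \ref{3.3} demands: from $B_Jz_J=\la\,\uno_J$ with $z_J$ strictly positive on $J$ you get $u=z_J/\la\geqp 0$ with $B_Ju=\uno_J$ --- \emph{provided} $\la>0$, a point you also skip (you only observe $\la\ge 0$; strict positivity follows since $B_{ii}=w_i^4\,d_i>0$ and hence $(B_Jz_J)_i\ge B_{ii}\,z_i>0$ for $i\in J$, which is how the paper argues). But once $u\geqp 0$ with $B_Ju=\uno_J$ is in hand, the entire convexity detour becomes unnecessary: Proposition \ref{variosIB} gives directly $I(B_J)^{-1}=\tr u=\la^{-1}\tr z_J=\la^{-1}$, hence $\la=I(B_J)$ --- which is precisely the paper's one-line conclusion. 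So the correct repair of your final step collapses your argument onto the paper's proof; as written, the justification of that step is wrong.
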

 \begin{proof}
 Observe that $L(\pa) = \api Bz, z\cpi = \sum\limits_{i,j=1}^m b_{ij} \, a_i^2 \, a_j^2 \,$.
 Since $B = B^*= B^T$,  we have that
 $$
0=  \nabla L(\ba) = 4 \ \big(   a_1 \, \sum\limits_{j=1}^m b_{1j} \, a_j ^2 \, , \dots ,  \
 a_m \, \sum\limits_{j=1}^m b_{mj} \, a_j ^2  \big) = 4 \ Bz\circ \ba \ .
 $$
 The tangent space of $ S^{m-1}$ at $\pa$ is $\{\pa\}\orto$. Hence, $\pa$ is a 
 critical point for $ S^{m-1}$
 \sii
 $$
 0 = \api \nabla L(\ba) , y \cpi = 4 \ \api B z\circ \ba , y \cpi  
 \peso{for every} y \in \{\pa\}\orto
 \iff Bz \circ \ba \in \gen{\pa } \ .
 $$
 This is clearly equivalent to the equation $B_Jz_J=\la \uno_J \,$, for some $\la \in \R$. 
 In this case,  since $0\leqp B$ with $0<B_{ii}$ for every  $i \in \In\,$  and $0 \leqpi z$, 
 we can conclude that   $\la >0$.  Moreover,  by Proposition \ref{variosIB} applied 
 to the matrix $B_J\,$, we have that
 $$
 B_J \ z_J=\la \uno_J \ \implies  \ B_J \ \frac{z_J}{\la} = \uno_J \ \implies
 I(B_J)\inv  = \tr \ \frac{z_J}{\la} = \api  \uno_J \, , \ \frac{z_J}{\la} \cpi =
 \la \inv \ \api  \uno  \, , \ z  \cpi =  \la \inv \ .
 $$
 Therefore $\la = I(B_J)$ and $B_Jz_J=I(B_J)\uno_J \,$.
 \end{proof}

\begin{teo}\label{RS los minimos}
Let $0\leqpi \ba\in S^{m-1} $ such that  $B_Jz_J=I(B_J)\uno_J\,$, i.e., 
$\pa$ is a critical point of $L$
restricted to $ S^{m-1}$. Then, the following conditions are
equivalent:
\begin{enumerate}
\item 
$I(B_J)=I_{sp}(B)=I(\FS)$.
\item $\ba$ is a global minimum of $L$ restricted to $ S^{m-1}$.
\item $\ba$ is a local minimum of $L$ restricted to $ S^{m-1}$.
\item $Bz\geqslant I(B_J)\uno$. In other words, that  $(Bz)_j\geq I(B_J)$ for every $j\notin J$.

\end{enumerate}
\end{teo}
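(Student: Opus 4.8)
The plan is to reduce everything to the minimization of a \emph{convex} quadratic form over a simplex, where all four conditions become transparent. Put $g(z)=\langle Bz,z\rangle$ for $z\in\Delta$, where $B=B_{\FS}\in\matmreal^+$ and $\Delta=\{x\in\Delta_0:x\geqp 0\}$ is the simplex from the previous subsection. The map $\phi:S^{m-1}\cap\R_{\geq 0}^m\to\Delta$ given by $\phi(\ba)=z=\ba\circ\pa=(a_1^2,\dots,a_m^2)$ is a homeomorphism (a continuous bijection between compact Hausdorff spaces, with continuous inverse $z\mapsto(\sqrt{z_j})_j$), and by \eqref{eq obs potencial} one has $L=g\circ\phi$. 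Since $B\geq 0$, the Hessian of $g$ is $2B\geq 0$, so $g$ is convex on the convex set $\Delta$. The first computation I would record is that at our critical point $\ba$, where $z$ is supported on $J$ and $B_Jz_J=I(B_J)\uno_J$, one has $(Bz)_j=(B_Jz_J)_j=I(B_J)$ for $j\in J$, and hence $L(\ba)=g(z)=\langle B_Jz_J,z_J\rangle=I(B_J)\,\langle\uno_J,z_J\rangle=I(B_J)\,\tr(z)=I(B_J)$, using $\tr z=\|\ba\|^2=1$.

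Next I would dispatch $(1)\Leftrightarrow(2)$ and $(2)\Rightarrow(3)$. By Remark \ref{rem clari}, the global minimum value of $L$ equals $I_{sp}(B)=I(\FS)$; combined with $L(\ba)=I(B_J)$ this shows that $\ba$ is a global minimizer exactly when $I(B_J)=I(\FS)=I_{sp}(B)$, which is precisely $(1)\Leftrightarrow(2)$. Trivially $(2)\Rightarrow(3)$. The genuinely nontrivial implication here is $(3)\Rightarrow(2)$, and this is where convexity enters: since $\phi$ is a homeomorphism, $\ba$ being a local minimizer of $L=g\circ\phi$ forces $z=\phi(\ba)$ to be a local minimizer of $g$ on $\Delta$, and a local minimizer of a convex function on a convex set is automatically global; transporting back through $\phi$ gives $(2)$. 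I would emphasize that this step must be argued \emph{topologically} and then via convexity of $g$, rather than differentially, because $\phi$ is not a diffeomorphism along the faces of $\Delta$ where some $a_j=0$, so one cannot simply compare Hessians on the sphere.

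Finally I would treat $(4)$ through the first-order (supporting-hyperplane) description of minimizers of a convex function on $\Delta$. For $(2)\Rightarrow(4)$: if $z$ minimizes $g$ on $\Delta$, then for each $j\notin J$ the feasible direction $e_j-z$ cannot decrease $g$, so $0\leq\langle\nabla g(z),e_j-z\rangle=2(Bz)_j-2\langle Bz,z\rangle=2(Bz)_j-2I(B_J)$, i.e.\ $(Bz)_j\geq I(B_J)$; together with the equalities for $j\in J$ this is exactly $Bz\geqslant I(B_J)\uno$. For $(4)\Rightarrow(2)$: convexity gives, for any $y\in\Delta$, $g(y)\geq g(z)+\langle 2Bz,y-z\rangle=2\langle Bz,y\rangle-I(B_J)$, and since $Bz\geqslant I(B_J)\uno$ with $y\geqp 0$ we get $\langle Bz,y\rangle\geq I(B_J)\tr(y)=I(B_J)$, whence $g(y)\geq I(B_J)=g(z)$ and $z$ is a global minimizer.

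I expect the main obstacle to be organizational rather than deep: making the convex reformulation $L=g\circ\phi$ and the identity $g=\langle B\,\cdot\,,\,\cdot\,\rangle$ precise, checking that the indexes line up ($L(\ba)=I(B_J)$ at a critical point and $\min L=I_{sp}(B)=I(\FS)$ from Remark \ref{rem clari}), and, most delicately, justifying the local-to-global passage in $(3)\Rightarrow(2)$ without smoothness of $\phi$ on the boundary of $\Delta$. Once the convexity of $g$ on the simplex is in hand, the four conditions are simply different faces of the same optimality statement.
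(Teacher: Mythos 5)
Your proof is correct, and it rests on the same underlying fact as the paper's proof --- the convexity of the quadratic $z\mapsto\langle Bz,z\rangle$ on the trace-one simplex $\Delta$, together with the value computation $L(\ba)=I(B_J)$ at the critical point and the identity $\min L=I_{sp}(B)=I(\FS)$ from Remark \ref{rem clari} --- but you organize the equivalences differently. The paper never invokes the abstract local-to-global principle for convex functions: it proves $4\Rightarrow 2$ by hand, via the segment $\gamma_w(t)=(1-t)z+tw$ and the sign of $\dot\rho_w(0)$ for the second-degree polynomial $\rho_w(t)=\langle B\gamma_w(t),\gamma_w(t)\rangle$, and closes the cycle with the contrapositive $\neg 4\Rightarrow\neg 3$, choosing $w=e_k$ for an index $k$ with $(Bz)_k<I(B_J)$ and lifting the descent segment back to the sphere through $\pa(t)=\mathrm{sgn}(\pa)\,\gamma_w(t)^{1/2}$ --- that explicit lift is precisely how the paper negotiates the boundary nonsmoothness of the square-root map that you rightly flag. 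Your route instead proves $3\Rightarrow 2$ directly, treating $\phi(\ba)=\ba\circ\pa$ as a homeomorphism onto $\Delta$ and applying ``a local minimizer of a convex function on a convex set is global,'' and then obtains $2\Leftrightarrow 4$ from the standard first-order (feasible-direction/KKT) conditions on the simplex; the evenness of $L$, encoded in Remark \ref{rem clari}, correctly returns you from the nonnegative orthant to the full sphere. What your version buys: condition $4$ is cleanly identified as the first-order optimality characterization rather than a waypoint in a cycle of implications, and the topological handling of $\phi$ dispenses with any boundary smoothness issue in one stroke. What the paper's version buys: the descent construction in $\neg4\Rightarrow\neg3$ is more explicit (it exhibits nearby sphere points of strictly smaller potential), and the same $\rho_w$ computations --- in particular Eq.~\eqref{RS RHO} and \eqref{RS segu} --- are reused verbatim in the proof of Corollary \ref{RS mins y kerB}, so the paper's bookkeeping pays off downstream.
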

\begin{proof}
Denote $A = A_{\FS}\,$. Recall that $I_2(A)^2=I_{sp}(B)$, by Eq. \eqref{clarividencia}. 
By Lemma \ref{RSPC},
$$
\|A\circ \ba\ba^*\|_{_2}^2 \ = \ \pint{Bz}{z} \  = \ I(B_J)\,
\pint{\uno }{z} =\ I(B_J) \ .
$$
This gives the equivalence 1 $\leftrightarrow$ 2. Observe that, if
${\bf b}\in S^{m-1}$, then $w = {\bf b} \circ {\bf b} \in \Delta$.
For each  $w\in \Delta$, consider the line $\gamma_w : [0,1] \to
\Delta$ joining $z$ and $w$, given by the formula $\gamma_w
(t)=(1-t)z+tw, $  for every $t\in [0,1]$. Consider the map $\rho_w :
[0,1] \to \R^+$ given by
\begin{align}\label{RS RHO}
\rho_w (t)&=\pint{B\gamma_w(t)}{\gamma_w(t)}=(1-t)^2\pint{Bz}{z} +
t^2 \pint{Bw}{w} +2t(1-t)\pint{Bz}{w} \ ,
\end{align}
for every $ t \in [0,1]$. Since  $\pint{Bz}{z}  = I(B_J) $, the
derivative of $\rho_w$ evaluated at zero is
$$ 
\dot{\rho}_w (0)=-2\pint{Bz}{z}+2\pint{Bz}{w} 
        =2 \Big(\pint{Bz}{w}-I(B_J)\, \Big) \ .
$$ 
On the other hand, for every $t \in \R$, 
\beq\label{RS segu}
\ddot{\rho}_{w}(t)= 2 \pint{Bz}{z} + 2 \pint{Bw}{w} -4  \pint{Bz}{w}
= 2\pint{B(z-w)}{z-w} \ge 0\ . \eeq Since $\rho_w$ is a second
degree polynomial, its leading coefficient is $\frac12 \
\ddot{\rho}_{w}(t) \ge 0$. Suppose now that $Bz\geqslant
I(B_J)\uno$. Using that $w \in \Delta$, we get that
$$
\pint{Bz}{w} \geq I(B_J) \pint{\uno}{w} = I(B_J) \ \implies  \
\dot{\rho}_w (0)\geq 0 \implies \dot{\rho}_w (t)\geq 0  \peso{for every} t\ge 0 \ .
$$
Therefore $\rho_w(1)\geq \rho_w(0)$. In other
words, we have proved  that $\pint{Bw}{w}\geq \pint{Bz}{z}$ for
every $w\in \Delta$. This implies that $\pa$ is a global  minimum of
$L$ restricted to $ S^{m-1}$.

\pausa Suppose now that $\pa \in S^{m-1}$, $z = \pa \circ \pa$, and that
there exists $k\in \IN{m}$ such that  $(Bz)_k<I(B_J)$. Observe that
$\pa \neq e_k\,$, because $(Be_k)_k =  b_{kk} = I(B_{\{k\}} ) $.

\pausa Let $w=e_k\in \Delta$, and consider the curves $\gamma_w$ and
$\rho_w$ defined before.
By the previous computations, we have that  $\dot{\rho}_w(0)=2(\,
(Bz)_k-I(B_J) \, )<0$. Therefore, for every $t> 0$ small enough, we
have that
$\gamma_w (t) \in \Delta$ and
$\pint{B\gamma_w(t)}{\gamma_w(t)}<\pint{Bz}{z}$. Taking the vectors
$\pa(t) = $ sgn$(\pa ) \, 
\gamma_w (t)  \rai   \in S^{m-1}$, we 
conclude that $\pa$ fails to be a local minimum.
\end{proof}

\begin{rem}\rm
Given $\pa \in S^{m-1}$, then $z = \pa \circ \pa \in \Delta$ and
$L(\pa ) = \api Bz, z\cpi$. Therefore
$$
I(\FS ) = I_2 (A ) = \min _{\pa \in S^{m-1}} \, \|A\circ \ba
\ba^*\|_2^2 =
\min _{0\leqpp \pa \in \, S^{m-1} 
}  \, \|A\circ \ba \ba^*\|_2^2
= \min_{z\in \Delta } \, \api Bz, z\cpi \ ,
$$
since every $z\in \Delta $ produces a unit vector $0\leqpi \pa = z\rai \in
S^{m-1}$.  Then in order to get the unit vectors
$\pa$ which attain this minimum, it suffices to characterize the
sets
$$
\cS (\FS )=\argmin_{z\in \Delta} \{ \pint{Bz}{z}\} \peso{and}
J(\FS ) = \big\{ J \inc \IN{m} \, : J = \sop \{z \}  \ \
\mbox{for some} \ \ z\in \cS(\FS) \big\} \ .
$$
If $\IM \notin J(\FS)$, it is possible to obtain minimizers 
$\pa\cdot \FS$ which are not fusion frames, because $S_{\pa\cdot \FS}\notin \matinv^+$
(see Example \ref{coco}). Still, 
if $I(\FS )<\frac {\sqrt{1+n}}{n}$ then $S_{\pa\cdot \FS}\in \matinv^+$ for any minimizer $\ba$, 
since in such case Proposition \ref{dist a la ident} implies that $\|I-n\,S_{\pa\cdot \FS}\|<1$.
Otherwise, the characterization of the set $J(\FS )$ is useful in order to 
discern if there are optimal sequences of weights 
$\pa$ such that $\pa \cdot \FS$ remains being a FF. 
Item 4 of Theorem
\ref{RS los minimos} gives a description of the elements of
$\cS(\FS )$. But its proof gives more information: \EOE
\end{rem}

\begin{cor}\label{RS mins y kerB}
Consider $\FS \in \cS_{m\, , \, n}(\d )$, $A= A_{\FS} $ and $B= B_{\FS}$ as before.
Then \ben
\item
The set  $ \ds \cS(\FS ) =\argmin_{z\in \Delta} \{ \pint{Bz}{z}\}$
is
convex. Moreover, for  any point  $z_0\in \cS(\FS )$, 
$$
\cS(\FS )=\big( z_0+N( B)\, \big) \cap \Delta \ .
$$
\item 
$J(\FS) $ is closed under taking unions, so that
$J_{\FS} = \bigcup J(\FS ) = \bigcup\limits _{z\in \,\cS(\FS )} \sop
\{z \} $ is an element of $J(\FS )$, and there exists $z_1 \in
\cS(\FS)$ with maximal support. \een
\end{cor}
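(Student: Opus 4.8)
The plan is to recognize Corollary~\ref{RS mins y kerB} as the fine structure of a convex quadratic minimization. The map $z\mapsto\pint{Bz}{z}$ is convex on $\R^m$ because $B=B_{\FS}\in\matmreal^+$ (it is a Gram matrix, as noted above), and by Remark~\ref{rem clari} together with the discussion preceding the corollary we have $\FP(\pa\cdot\FS)=\pint{Bz}{z}$ for $z=\pa\circ\pa$, so that $\cS(\FS)=\argmin_{z\in\Delta}\pint{Bz}{z}$ is the minimizer set with common minimal value $I(\FS)$. Convexity of $\cS(\FS)$ will then be automatic; the affine description and the support statements come from exploiting that $B$ is positive \emph{semi}definite, so that $\pint{Bv}{v}=0$ forces $Bv=0$.

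For item~1, I would fix $z_0\in\cS(\FS)$ and prove $\cS(\FS)=(z_0+N(B))\cap\Delta$ by double inclusion. Given any $u,v\in\cS(\FS)$, I reuse the curve from the proof of Theorem~\ref{RS los minimos}, now run along the segment from $u$ to $v$: the function $t\mapsto\pint{B((1-t)u+tv)}{(1-t)u+tv}$ is a degree-two polynomial whose values at $t=0,1$ both equal $I(\FS)$ (both $u,v$ are minimizers) and whose leading coefficient is $\pint{B(u-v)}{u-v}\ge 0$ by Eq.~\eqref{RS segu}. Since $\Delta$ is convex the whole segment lies in $\Delta$, so this polynomial is $\ge I(\FS)$ on $[0,1]$; a parabola bounded below by its two equal endpoint values must be constant, which forces the leading coefficient $\pint{B(u-v)}{u-v}$ to vanish and hence $B(u-v)=0$, i.e. $u-v\in N(B)$. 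Applying this with $v=z_0$ fixed and $u$ ranging over $\cS(\FS)$ gives $\cS(\FS)\subseteq(z_0+N(B))\cap\Delta$. Conversely, if $v\in N(B)$ and $z_0+v\in\Delta$, expanding $\pint{B(z_0+v)}{z_0+v}$ and using $Bv=0$ together with $B=B^{*}$ yields the value $\pint{Bz_0}{z_0}=I(\FS)$, so $z_0+v\in\cS(\FS)$. This proves the equality, and convexity of $\cS(\FS)$ follows at once since it is the intersection of an affine subspace with the convex set $\Delta$.

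For item~2, the crucial point is that the vectors of $\Delta$ are entrywise nonnegative, so no cancellation can occur in a convex combination. If $u,v\in\cS(\FS)$ have supports $J_u,J_v$, then $\frac12(u+v)$ lies in $\cS(\FS)$ by the convexity just established and, since $u$ and $v$ are entrywise nonnegative, its support is exactly $J_u\cup J_v$. Hence $J(\FS)$ is closed under finite unions. Being a finite family of subsets of $\IN{m}$, the union of all its members is again a member; this member is precisely $J_{\FS}=\bigcup_{z\in\cS(\FS)}\sop\{z\}$, and any $z_1\in\cS(\FS)$ realizing it is a minimizer of maximal support.

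I do not anticipate a genuine obstacle, since the whole argument is routine convex analysis; the two steps demanding care are exactly the two uses of positivity of $B$: the implication $\pint{Bv}{v}=0\Rightarrow Bv=0$ (valid because $B\in\matmreal^+$) and the no-cancellation property that pins the support of $\frac12(u+v)$ to $J_u\cup J_v$ (valid because $\Delta$ consists of nonnegative vectors). It is also worth recording that $\Delta$ is convex and compact — the trace-normalized simplex arising from $\|\pa\|=1\iff\tr(z)=1$ via Eq.~\eqref{norma uno} — so that segments between minimizers remain in $\Delta$, which is what legitimizes the parabola argument.
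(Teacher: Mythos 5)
Your proof is correct and takes essentially the same route as the paper: the same second-degree polynomial $\rho_{z,w}$ along segments of $\Delta$ with the sign information of Eq.~\eqref{RS segu}, the semidefiniteness implication $\pint{B(z-w)}{z-w}=0\Rightarrow z-w\in N(B)$, and the no-cancellation argument pinning the support of a convex combination of nonnegative minimizers to the union of supports. The only differences are presentational: you spell out the reverse inclusion $(z_0+N(B))\cap\Delta\subseteq\cS(\FS)$, which the paper dismisses as ``similar'', and you obtain convexity of $\cS(\FS)$ from the affine description rather than directly from the constancy of $\rho$ along segments.
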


\proof 
\ben
\item
Let $z, w\in \Delta$, and consider the function, defined in Eq.
\eqref{RS RHO}:
$$
\rho_{z, \, w} (t)=\big \api B(\, (1-t)z+t w) \, , \, (1-t)z+t w
\big\cpi  \ , \quad t \in \R \ .
$$
Suppose now that  $w ,z  \in \cS(\FS )$ and $w\neq z$. Using that
$\rho_{z, \, w}$ is of second degree, the equality $\ddot{\rho}_{z, \, w}(t)
= 2\pint{B(z-w)}{z-w} \ge 0$  given by Eq. \eqref{RS segu}, and the
fact $\rho_{z, \, w}(t) \geq 0$ for every $t \in \R$, we can conclude
that
$$
\rho_{z, \, w} \peso{is constant}  \iff  \ \ddot{\rho}_{z, \, w}(t) = 0 \
\iff \  z-w\in N( B) \ .
$$
On the other hand, we have that $\rho_{z, \, w}(1) = \rho_{z, \, w}(0) =
\min \limits_{t \in [0,1]} \, \rho_{z, \, w}(t)$. This implies that the
map $\rho_{z, \, w}$ is constant, so that $\gamma(t)\in \cS(\FS)$ for
every $t\in [0,1]$, and $z-w\in N(B)$. The proof of the fact that 
$( z_0+N(B) \, \big) \cap \Delta \inc \cS(\FS )$ for every $z_0 \in \cS(\FS)$ is similar.
\item
Let $z$ and $w$ in $\cS(\FS )$, with supports $J_1$ and $J_2$
respectively. Then, since the entire line 
$tz+(1-t)w\in \cS(\FS)$
($t\in [0,1]$), if we take $u = tz+(1-t)w $ for any $t \in (0,1)$,
it is easy to see that $u\in \cS(\FS )$ and $\sop \{u \}=J_1\cup J_2\ $.
Since  $J(\FS )$ is finite, also the set
$$
J_{\FS}=\bigcup_{z\in \,\cS(\FS )} \sop \{ z \} \ \in  J(\FS) \ .
$$
 Hence $J_{\FS}$ is the support of some $z_1\in \cS(\FS )$. \QED
\een

\begin{cor} \label{RS bu1}\rm
Let $B= B_{\FS}$ and $A = A_{\FS}$ as before. 
Assume that there exists $v\in \RR^m$ such that $v\geqp 0$ and $B\,
v=\uno$. Denote  $\pa = (\tr v ) \mrai \ (v_{_1}\rai \, , \dots ,v_{_m}\rai )$. 
Then  \beq\label{RS da con a} 
 \|\pa\|=1 \peso{and} (\tr v ) \inv  =\FP(\ba\cdot \FS) =
I_2(A)^2 = I(\FS)\ . \eeq
\end{cor}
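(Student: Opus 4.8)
The plan is to verify the two displayed claims essentially by substitution, using the hypothesis $Bv=\uno$ together with the characterization of global minimizers in Theorem \ref{RS los minimos}. First I would set $z=\pa\circ\pa$ and observe directly that $z=(\tr v)\inv\, v$, since $\pa_i^2=(\tr v)\inv v_i$ for each $i$. From this, $\|\pa\|^2=\tr z=(\tr v)\inv\suml_{i\in\IM} v_i=1$, which is the first assertion $\|\pa\|=1$. Moreover $Bz=(\tr v)\inv Bv=(\tr v)\inv\uno$ by hypothesis, and this single identity is the engine for everything that follows.

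Next I would compute the potential. By Remark \ref{rem clari}, $\FP(\pa\cdot\FS)=\pint{Bz}{z}$; substituting $Bz=(\tr v)\inv\uno$ and using $\pint{\uno}{z}=\tr z=\|\pa\|^2=1$, this equals $(\tr v)\inv$. This already gives $\FP(\pa\cdot\FS)=(\tr v)\inv$, so it only remains to identify this value with the minimum $I(\FS)$.

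For that identification I would show $\pa$ is a global minimizer of $L$ on $S^{m-1}$. Since $Bz\circ\pa=(\tr v)\inv\pa\in\gen{\pa}$, Lemma \ref{RSPC} shows $\pa$ is a critical point; writing $J=\sop\{\pa\}$, the restricted equation reads $B_Jz_J=(\tr v)\inv\uno_J$, and the computation in the proof of that lemma (via Proposition \ref{variosIB}) identifies the Lagrange multiplier as $I(B_J)=(\tr v)\inv$. Now condition 4 of Theorem \ref{RS los minimos} asks for $Bz\geqslant I(B_J)\uno$; here $Bz=(\tr v)\inv\uno=I(B_J)\uno$, so it holds with equality. Hence $\pa$ is a global minimum and $I(B_J)=I_{sp}(B)=I(\FS)$. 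Combining with $I(\FS)=I_{sp}(B)=I_2(A)^2$ from Eq. \eqref{clarividencia} closes the chain $(\tr v)\inv=\FP(\pa\cdot\FS)=I_2(A)^2=I(\FS)$.

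The computations are all routine; the only point needing care is the bookkeeping on the support, namely that $\sop\{\pa\}=\sop\{v\}$ so that the restriction to $J$ is legitimate. The genuine content — and the reason the statement is true at all — is that the hypothesis $Bv=\uno$ is engineered precisely so that $Bz$ is a constant multiple of $\uno$; this makes the off-support inequalities $(Bz)_j\ge I(B_J)$ for $j\notin J$ automatic, upgrading criticality to \emph{global} minimality via condition 4 of Theorem \ref{RS los minimos} rather than merely yielding a critical point.
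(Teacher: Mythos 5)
Your proof is correct, and it enters Theorem \ref{RS los minimos} through a different door than the paper does. The paper's own proof first establishes condition 1 of that theorem directly: since $0\leqp B$ and $B v=\uno$ with $v\geqp 0$, Proposition \ref{3.3} from the appendix gives $I_{sp}(B)=I(B)$, and Proposition \ref{variosIB} gives $I(B)=I(B_J)=(\tr v)\inv$, yielding the full chain $I_2(A)^2=I_{sp}(B)=I(B)=I(B_J)=(\tr v)\inv$ at once; the theorem then supplies global minimality as the implication $1\Rightarrow 2$. You instead verify criticality via Lemma \ref{RSPC} (legitimately, since $Bz\circ\pa=(\tr v)\inv\pa\in\gen{\pa}$ and the support bookkeeping $\sop\{\pa\}=\sop\{v\}$ is as you say) and then check condition 4, which under the hypothesis $Bz=(\tr v)\inv\uno=I(B_J)\uno$ holds with equality everywhere, including off the support; global minimality comes from $4\Rightarrow 2$ and the index identities from $2\Rightarrow 1$ plus Eq. \eqref{clarividencia}. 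What each route buys: yours is more self-contained within Section 4 --- it never invokes Proposition \ref{3.3}, using the appendix only through Proposition \ref{variosIB} item 1 as embedded in the proof of Lemma \ref{RSPC} --- and it makes transparent \emph{why} the hypothesis $Bv=\uno$ forces global rather than merely local optimality (the off-support inequalities are automatic). The paper's route is shorter on the page and records the identity $I(B)=I_{sp}(B)$ between the minimal and spectral Hadamard indexes of $B$, which is of independent interest and is exactly the content of Proposition \ref{3.3}; your argument recovers $I(B_J)=I_{sp}(B)$ but not the statement about $I(B)$ itself. Both proofs are complete; the only step in yours deserving explicit mention in a write-up is the one you flagged, that $(Bz)_i=(B_Jz_J)_i$ for $i\in J$ because $z$ is supported in $J$, so the restricted equation $B_Jz_J=(\tr v)\inv\uno_J$ is indeed what Lemma \ref{RSPC} requires.
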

\proof The fact that $v\geqp 0$ and $B\, v=\uno$ implies, by
Propositions \ref{variosIB} and \ref{3.3}, that
$$
I_2(A)^2 = I_{sp}(B) = I(B) = I(B_J) = (\tr v ) \inv  \ ,
$$
where $J = \sop \{v \} =  \sop \{\pa \}$. 
Since $z = \frac{v}{\tr v} \in \Delta$, then $\pa = z\rai \in S^{m-1}$ 
and $Bz = I(B_J) \uno$. Hence, by Theorem
\ref{RS los minimos}, we have that $z \in \cS(\FS )$ and $\pa \in
S^{m-1}$ satisfies Eq. \eqref{RS da con a}. \QED

\begin{rem} The results of this section seems to be unknown still for the case of 
vector frames. In this case our restrictions translate to the following: 
Let $\F = (f_i)\subim $ be a frame for $\hil$ such that each $\|f_i\| = 1$ 
(i.e., $d_i = 1 \implies w_i = d_i\mrai = 1$). For $\pa \in \R^m$, we consider the 
sequence $\pa \cdot \F = (a_i \, f_i)\subim \,$, and we define $I(\F ) = 
\min\limits_{\|\pa \|=1 } \fp (\pa \cdot \F )$. Then, all the results 
of the section remain true if one consider the matrices 
$$
A_\F = \Big( \, |\api f_j \, , \, f_i\cpi | \, \Big) _{i, j \in \IM } \in \matm _{sa} 
\peso{and} 
B_\F = \Big( \, |\api f_j \, , \, f_i\cpi |^2  \, \Big) _{i, j \in \IM } \in \matm ^+ \ .
$$
Some proofs are slightly easier in this case, because $I(\F ) =  I_2(A_\F ) = I_2(G_\F )$, 
where $G_\F$ is the Gramm matrix of $\F$: 
$G_\F = \big( \, \api f_j \, , \, f_i\cpi  \, \big) _{i, j \in \IM } \in \matm^+$. Observe that the 
diagonal entries of the three matrices involved are equal to $1$. \EOE
\end{rem}

\begin{exa}\label{coco}
Let $B = \frac 14 \bm{ccc} 4&1 & 3 \\ 1 &4 & 2\\3 &2& 4 \em$. Since $A = (B_{ij}\rai)_{ij\in \IN{3}} \in \mathcal{G}\textit{l}\,(3)^+$, we deduce that $A$ 
 is the Gramm
matrix of a Riesz basis
$\cF$ of $\C^3$. Let $v = (\, \frac 45 \ , \  \frac 45 \ , 0) \geqp 0$. Observe that 
$$
\barr{rl}
B v & = \uno_3 \ \implies \peso{(by Corollary \ref{RS bu1})} z = 
(\tr v)\inv \, v = \Big(\ \frac 12 \ , \frac 12\  , 0\ \Big) \in 
\cS(\cF ) \ . \earr 
$$
Since $N( B) = \trivial$, Corollary \ref{RS mins y kerB} assures that
$\cS(\cF ) = \{ z\}$, and $J_0 = \{1,2\}$ is the maximal support for
$\cS(\cF )$. Taking $\pa = z\rai$, we have that $\pa \cdot \cF$ is
the unique scaled sequence of $\cF$ with minimal Frame Potential, but
it fails to be a frame for $\C^3$, because it has just two non zero
elements. \EOE
\end{exa}

\begin{exa}
It can be proved that every $G \in \cM_3(\C )^+ $ such that rk $G = 2$ and $G_{ii} = 1 $ 
for every $i \in \IN{3}\,$ (considered as the Gramm
matrix of a frame $\cF$ for $\C^2$ with three unitary elements),
satisfies that the minimizers $\pa \cdot \cF$ of the  BF-potential
are  frames for $\C^2$.

\pausa Indeed, given $z\in \cS(\cF )$, it is easy to see that $J =
\sop \{ z\}$ has more than one element (otherwise $z = e_i$ for some
$i \in \IN{3}\,$). If $J = \IN{3}\,$ there is nothing to prove. Assume
that
$\sop \{z\} = J$ with $|J| = 2$. 
If rk $G_{J} < 2$ we must have 
$B_{J} = \bm{cc}1&1\\1&1\em$. 
In this case, $I(B_J) = I_{sp} (B) = 1$. But the unique  matrix $B
\in \cM_3(\C )^+ $ which satisfies that $0 \leqp B  $, $I_{sp} (B) =
1$ and  $B_{ii} = 1$ for every $i \in \IN{3}\,$ is $B = \uno \cdot
\uno ^*$. Indeed, if some $B_{ij}<1$, then
$I_{sp}(B) \le I_{sp} \bm{cc} 1& B_{ij}\\B_{ij}&1\em =
\frac{1+B_{ij}}{2} <1$. Finally, since $1 = $ rk $ \uno \cdot \uno^* =$ rk $B
=$ rk $G\circ \overline{G}   \ge $ rk $G= 2$, we have a
contradiction.
 \EOE
\end{exa}

\begin{rem} Let $\cW=\{W_i\}_{i \in \IM}\, $ be a generating set of subspaces. 
Given a partition $\{J_k\}_{k\in \IN{p}}$  of the set $\IM\,$, we say that  
the sequence $\{\cW_k\}_{k\in \IN{p}}$ of $\cW$ given by 
$\cW _k = (W_i )_{i \in J_k}\,$ is a {\it partition in orthogonal components} 
of $\cW$ (briefly POC) 
if $W_i \perp W_j$ for every pair $i \in J_k\,$, $j \in \IM\setminus J_k\,$.

\pausa
Note that by definition the trivial partition given by $J_1 =\IM$  produces  a
POC of $\cW$. 
If $\{\cW_k\}_{k\in \IN{p}}$ is  a POC  of $\cW$, we say that it is  {\it maximal} 
 if the only POC of each $\cW_k$ is the trivial one. It is clear
that there always exits such a  maximal POC for $\cW$. 

 \pausa
 Let $\{\cW_k\}_{k\in \IN{p}}$ be a maximal POC of 
 $\cW $ with $|J_k|=m_k$ for $1\leq k\leq p$. Let $\mathbf a_k\in
\RR^{m_k}$ be such that $\|a_k\|=1$ and $I(\cW_k)=
\FP(\mathbf a_k\cdot \cW_k)$ for each $1\leq k\leq p$. Then, there exists
$\gamma=(\gamma_k)_{k\in \IN{p}}\in \RR_{> 0}^p$ with $\|\gamma\|=1$ and
such that 
$$ 
I(\cW)=\suml_{k=1}^p \FP(\gamma_k \,\mathbf a_k\cdot \cW_k) \ .
$$
Conversely, if $\mathbf a=(\mathbf a_1,\ldots,\mathbf a_p)$ with 
$0\leqpi \mathbf a_k\in\RR^{m_k}$ and $\|\mathbf a\|=1$
is such that $I(\mathcal W)=\FP(\mathbf a\cdot \cW)$ then 
$\mathbf a_k\neq 0$ and $I(\cW_k)=\FP(\|\mathbf a_k\|^{-1} \,
\mathbf a_k\cdot\cW_k )$, for $1\leq k\leq p$. 
Hence, we can restrict our study of the 
optimal weight of sequence of subspaces to each of the components of the maximal partition. 
This in turn implies that we can reduce the problem of describing the 
optimal weights to the case where the matrix $B$ (which has non-negative 
entries and is positive semi-definite) is irreducible i.e., none of its 
symmetric permutations can be written as the direct sum of two matrices. 
This last property is relevant in the theory of matrices with non-negative entries.
\end{rem}

\appendix{
\section{Hadamard products and  indexes.}

In this section we recall some definitions and results from \cite{[CS]} which are 
closely related with the problems of Section 4. The exposition is done with 
some detail for several reasons: a) Most results we state are explicitly used in the previous section. 
b)  The formulation of these results given in \cite{[CS]} is 
quite technical and intricate, so we intend here to give a clarified version. 
c) Although some results in the appendix are not directly applied, 
they are included since they give effective criteria for computing the indexes and the 
vectors that realize them. 
This is relevant since we have identified
these objects as the optimal weights and the minimal potential 
for fusion frames.

\subsection{Basic definitions and properties}
We begin with an extended version of  Definition \ref{IND1} 
\begin{fed}\label{IND2}\rm
Let $G\in \cM_m(\C )^+$.
\begin{enumerate}
\item The {\bf minimal}-Hadamard index of $G$ is the number 
\[
I(G)=\max\{ \lambda \geq 0\,  : \, G\circ B\geq \lambda B \peso{for every}  B \in \cM_m(\C )^+\} \ .
\]
\item Given an u.i.n  $N$ in $\cM_m(\C )$, the $N$-Hadamard index of $G$ is
\begin{align*}
I_N(G)&= \max\ \big\{\lambda \geq 0 \, : \,  N(G\circ B)\geq \lambda N(B)
\peso{for every} B\in \cM_m(\C )^+ \big\} \\ 
      &= \min \ \big\{ N(G\circ B) \, : \, B\in \cM_m(\C )^+ \text { and } N(B)=1\big\} \ .
\end{align*}
\end{enumerate}
The index of $G$ associated with the spectral norm  $\|\cdot \|=\|\cdot \|_{sp}$
is denoted by  $I_{sp}(G)$, and  the one associated with the 
Frobenius norm $\|\cdot\|_2$ is denoted by  $I_2(G)$. \EOE
\end{fed}

\begin{pro}\label{variosIB} \rm
Let $G\in \cM_m(\C )^+$, $\mathds{1}=(1,1,\ldots,1) \in \C ^m$ and  $\E = \uno \cdot \uno^T$.
\ben
\item $I(G) \neq 0$ \sii \ $\mathds{1} \in R(G)$. If there exists
$y\in \cene$ such that $Gy=\mathds{1}$, then
\beq\label{2.5}
\barr{rl}
 I(G)^{-1}& =\suml_{i \in \IM}\,  y_i =
\api y , \mathds{1} \cpi  = \rho (G^\dagger \E )
= \min \ \{ \ \api Gz,z\cpi  \ : \  \suml_{i \in \IM} z_i = 1 \ \} \ .
\earr
\eeq
If $G>0$, then also 
$
 I(G) = \Big(\sum\limits_{i,j=1}^n (G^{-1})_{ij}\Big)^{-1} = \ds \frac{\det G }{\det(G+\E)-\det G} \ $ .
\item $I(G) \le I_N(G)$ for every u.i.n. $N$.
\item If $J \inc \IN{m}\,$, then $I(G_J) \ge I(G)$ and $I_N(G_J) \ge I_N(G)$. 
\item If $D=\diag{d} \in \cM_m(\C )^+$ is {\bf diagonal}, then $I_N(D) = N'(d\inv ) \inv $. In particular, 
$$
I(D) = I_{sp}(D) = \Big(\sum _{i \in \IM}\,  d_i\inv \Big)\inv \peso {and}
I_2(D) = \Big(\sum _{i \in \IM}\,  d_i ^{-2} \Big) \mrai \ .
$$
\item Both indexes $I_2 $ e $I_{sp}$ are attained by matrices $B \in \cM_m(\C )^+$ of rank one. 
This means that
$$
I_2(G) = \min_{\|x\| = 1} \  \|G\circ xx^* \|_{_2} \peso {and}
I_{sp}(G) = \min_{\|y\| = 1} \  \|G\circ yy^* \| \ .
$$
Moreover, the minima are also attained at vectors  $x\geqp 0$ (or $y\geqp 0$).
\item Let $B = G\circ \overline{G} \in  \cM_m(\R )^+$. Then 
$I_2\,(G)\ =\ I_{sp}\, (B) \rai \ = \  I_{sp}\, (G\circ \overline{G}  ) \rai \ .$
\item Moreover, if $0 \leqp B \in \matmreal^+$ and $A\in \matreal_{sa}$ is given by $A_{ij}=B_{ij}^{1/2}$ for
$1\leq i,\,j\leq m$ then, even if $A \notin \matmreal^+$, the index $I_2(A)$ of 
Definition \ref{IND1}	still satisfies  
\beq\label{nopos}
I_{2}(A) = \min\limits _{\|x\|=1}\|A\circ x\,x^*\|_{_2} =\min\limits _{\|x\|=1}\|B\circ x\,x^*\|\rai \ .
\eeq
\item It holds that
$
I_{sp}( G) = \inf  \ \{ \ I_{sp}( D) : G\le D \ \hbox{ and $D$ is diagonal }\}\, $. Therefore 
\beq
\barr{rl} 
I_{2} ( G)& = \inf \ \{ \  \big(\, \suml _1^n d_{ii}^{-2}\, \big)\mrai : \ 0< D \ \, \mbox{ \rm
is diagonal and } \, G \circ {\overline G} \le D^2 \ \} \ . \QEDP \earr
\eeq
\een
\end{pro}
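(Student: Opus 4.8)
The plan is to derive all eight identities from the three elementary mechanisms used in \cite{[CS]}: the congruence identity $G\circ xx^* = D_x\,G\,D_x^*$ with $D_x=\diag{x}$, which turns a Hadamard product against a rank-one matrix into a congruence of $G$; the fact that the cone $\cM_m(\C)^+$ is generated by its rank-one elements, so that every ``for all $B\ge 0$'' condition may be tested only on $B=xx^*$; and gauge--norm duality for unitarily invariant norms.

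First I would dispatch items 1--4 as direct computations. Since $B\mapsto G\circ B-\lambda B$ is linear and rank-one matrices generate $\cM_m(\C)^+$, the defining condition of $I(G)$ is equivalent to $D_x(G-\lambda\E)D_x^*\ge 0$ for all $x$, hence to $G\ge\lambda\,\E$, so that $I(G)=\max\{\lambda:G\ge\lambda\,\uno\uno^*\}$. The rank-one update $G-\lambda\uno\uno^*\ge 0\iff \lambda\,\langle G^\dagger\uno,\uno\rangle\le 1$ (valid when $\uno\in R(G)$, and failing for every $\lambda>0$ when $\uno\notin R(G)$, because the component of $\uno$ in $N(G)$ destroys positivity) yields item 1; here $\rho(G^\dagger\E)=\langle y,\uno\rangle$ because $G^\dagger\E=y\,\uno^*$ is rank one, the determinant formula comes from $\det(G+\E)=\det G\,(1+\langle G^{-1}\uno,\uno\rangle)$, and the variational formula from a Lagrange computation on the hyperplane $\sum z_i=1$. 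Item 2 is immediate, since $G\circ B\ge I(G)\,B$ and monotonicity of u.i.n.\ on $\cM_m(\C)^+$ give $N(G\circ B)\ge I(G)\,N(B)$. Item 3 follows by zero-padding $C\in\cM_k(\C)^+$ (with $k=|J|$) to $\tilde C\in\cM_m(\C)^+$ supported on $J$, noting $G\circ\tilde C=\widetilde{G_J\circ C}$ and that u.i.n.\ are unchanged by appended zero singular values. For item 4 one computes $D\circ B=\diag{d_iB_{ii}}$: testing on rank-one $B$ gives the bound $N'(d\inv)\inv$ through the polar duality $\min\{\Phi(w):\langle d\inv,w\rangle=1,\ w\geqp 0\}=\Phi'(d\inv)\inv$, and the matching lower bound $N(D\circ B)\ge N(B)\,N'(d\inv)\inv$ uses the same duality together with the Schur--Horn majorization $\diag{B}\prec\lambda(B)$.

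The heart of the statement is item 5, and items 6--7 are then corollaries. The unifying fact to establish is that for an entrywise-nonnegative $B\in\cM_m(\C)^+$ one has $\min_{z\in\Delta}\langle Bz,z\rangle=I_{sp}(B)=\min_{\|x\|=1}\|B\circ xx^*\|$, i.e.\ the spectral index is realized on rank-one matrices (an extreme-point/complementary-slackness argument for the convex program defining $I_{sp}$, the genuinely nontrivial content of \cite{[CS]}), and the simplex minimum of the quadratic form coincides with it. Granting this, the Frobenius case follows from the identity $\|G\circ xx^*\|_{_2}^2=\langle(G\circ\overline G)\,u,u\rangle$ with $u=x\circ\bar x\in\Delta$, which realizes $I_2$ on rank-one matrices and gives item 6 as the chain $I_2(G)^2=\min_{u\in\Delta}\langle(G\circ\overline G)u,u\rangle=I_{sp}(G\circ\overline G)$. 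Item 7 is the \emph{same} computation applied to $B$ directly: since $\|A\circ xx^*\|_{_2}^2=\langle Bu,u\rangle$ depends on $A$ only through $B=(A_{ij}^2)$, the hypothesis $A\notin\matmreal^+$ is irrelevant, and both sides equal $I_{sp}(B)^{1/2}$. Finally, item 8 is a diagonal-dominance duality: the inequality $I_{sp}(G)\le I_{sp}(D)$ for every diagonal $D\ge G$ follows from Schur-monotonicity of $\circ$ and monotonicity of $\|\cdot\|_{sp}$, while the reverse inequality, the existence of a diagonal $D\ge G$ with $I_{sp}(D)$ arbitrarily close to $I_{sp}(G)$, is a separation/SDP-duality argument; the formula for $I_2(G)$ is obtained by applying this to $G\circ\overline G$, writing the dominating diagonal as $D^2$, and invoking item 6.

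The main obstacle will be item 5: the rank-one reduction and the identification of the simplex minimum with $I_{sp}(G\circ\overline G)$ is precisely where the convex-optimization work of \cite{[CS]} enters, and items 6--8 all rest on it. By contrast, items 1--4 are essentially bookkeeping once the congruence identity and the rank-one generation of $\cM_m(\C)^+$ are in hand.
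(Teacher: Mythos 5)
First, a point of comparison: the paper does not actually prove this proposition --- the appendix explicitly recalls these results from \cite{[CS]} and the statement carries a bare Q.E.D. --- so your proposal is, by construction, more detailed than the paper's own treatment. Measured on its own terms, your handling of items 1--4 is correct and essentially complete: the congruence $G\circ xx^* = D_xGD_x^*$, the reduction to rank-one test matrices (valid because $B\mapsto G\circ B-\lambda B$ is linear and positivity is additive over the decomposition $B=\sum_k x_kx_k^*$), the resulting identity $I(G)=\max\{\lambda\ge 0: G\ge\lambda\,\E\}$, the rank-one update criterion with the degenerate case $\uno\notin R(G)$, the matrix determinant lemma, zero-padding for item 3, and the gauge-duality computation for item 4 all work as sketched. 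One caution, though: if you carry out your Lagrange computation honestly you will find $\min\{\api Gz,z\cpi : \sum_i z_i=1\} = \api G^\dagger\uno,\uno\cpi^{-1} = I(G)$, \emph{not} $I(G)^{-1}$; the final equality in \eqref{2.5} as printed is an inversion misprint (compare how the quantity is used in Lemma \ref{RSPC} and Theorem \ref{RS los minimos}, where the simplex minimum of $\api Bz,z\cpi$ equals $I(B_J)$, and test $G=I_m$: the minimum is $1/m=I(G)$ while $\sum y_i = m = I(G)^{-1}$). Your write-up endorses the misprinted chain instead of flagging it.

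The genuine gap is that the two assertions you yourself identify as the crux are never proved: (a) item 5, the attainment of $I_{sp}$ and $I_2$ on rank-one matrices, together with the identity $\min_{u\in\Delta}\api Bu,u\cpi = I_{sp}(B)$ for $0\leqp B\in\matmreal^+$ on which your items 6 and 7 entirely rest; and (b) the reverse inequality in item 8, i.e.\ the existence of diagonal $D\ge G$ with $I_{sp}(D)$ arbitrarily close to $I_{sp}(G)$. ``Extreme-point/complementary-slackness'' and ``separation/SDP-duality'' are names of strategies, not arguments, and these are precisely the substantive theorems of \cite{[CS]}; a blind reconstruction must either import them explicitly as cited lemmas (which is in effect what the paper does) or supply the compactness and critical-point analysis in the spirit of Lemma \ref{RSPC} and Proposition \ref{el x}. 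One piece of the hard part, however, you can extract for free and should not defer: the nonnegativity refinement in item 5 is elementary once rank-one attainment is granted. Writing $x=D_\omega|x|$ with $\omega$ a unimodular phase vector, diagonal matrices commute, so $G\circ |x|\,|x|^* = D_{|x|}\,G\,D_{|x|}^{\,*} = D_{\bar\omega}\,\bigl(G\circ xx^*\bigr)\,D_{\bar\omega}^{\,*}$ with $D_{\bar\omega}$ unitary; hence every unitarily invariant norm satisfies $N(G\circ|x|\,|x|^*)=N(G\circ xx^*)$, and any rank-one minimizer may be replaced by $|x|\geqp 0$. As written, then, your proposal correctly settles the routine half of the proposition and correctly organizes, but does not close, the half that \cite{[CS]} exists to prove.
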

\begin{pro}\label{3.3}\rm
Let $G\in \cM_m(\R )^+$ such that $0\leqp G$. Then 
$I_{sp}(G)=I(G)\neq 0 \iff $
there exists $u\geqp 0$  such that   $Gu=\mathds{1}$. \QED
\end{pro}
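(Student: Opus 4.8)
The plan is to read both quantities as constrained minima of the single quadratic form $q(z)=\langle Gz,z\rangle$ and to lean on the general inequality $I(G)\le I_{sp}(G)$ from Proposition \ref{variosIB}(2). Whenever $I(G)\neq 0$, Proposition \ref{variosIB}(1) gives $\mathds{1}\in R(G)$, a vector $y$ with $Gy=\mathds{1}$, and $\sum_i y_i=I(G)^{-1}$. The running tool I would record is that $\min\{q(z):z\in H\}=I(G)$, where $H=\{z:\sum_i z_i=1\}$, attained at $z^\star=I(G)\,y$: indeed $Gz^\star=I(G)\,\mathds{1}\in\mathrm{span}\{\mathds{1}\}$ is the (sufficient) optimality condition for this convex problem, and $q(z^\star)=I(G)\,\langle\mathds{1},z^\star\rangle=I(G)$. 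More generally, every minimizer $z$ of $q$ on $H$ satisfies $Gz=\alpha\,\mathds{1}$ with $\alpha=q(z)=I(G)$.

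For the implication $(\Leftarrow)$, suppose $u\geqp 0$ and $Gu=\mathds{1}$, and put $s=\sum_i u_i>0$, so $I(G)=1/s$. I would test $I_{sp}$ against the unit vector $x=(\sqrt{u_i/s})_i\geqp 0$. Since $x\circ x=u/s$, one computes $(G\circ xx^*)\,x=x\circ\big(G(x\circ x)\big)=(1/s)\,x$, so $x$ is a nonnegative eigenvector, with eigenvalue $1/s$, of the entrywise–nonnegative and positive semidefinite (Schur) matrix $M=G\circ xx^*$. To identify $1/s$ with $\rho(M)=\|M\|_{sp}$ I would pass to $J=\mathrm{supp}(x)$: there $M$ has the block $M_J=G_J\circ x_Jx_J^*$ with $x_J$ strictly positive while the complementary rows and columns vanish, whence $\rho(M)=\rho(M_J)$; the standard Perron--Frobenius fact that a strictly positive eigenvector of a nonnegative matrix must belong to its spectral radius gives $\rho(M_J)=1/s$. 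Then Proposition \ref{variosIB}(5) yields $I_{sp}(G)\le\|M\|_{sp}=1/s=I(G)$, and combined with $I(G)\le I_{sp}(G)$ this forces $I_{sp}(G)=I(G)=1/s\neq 0$.

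For $(\Rightarrow)$, assume $I_{sp}(G)=I(G)=:c\neq 0$. I would introduce $m_+=\min\{q(z):z\geqp 0,\ z\in H\}$ and build the sandwich $I(G)\le m_+\le I_{sp}(G)$. The left inequality is immediate, since the nonnegative simplex sits inside $H$ and $\min_H q=I(G)$. For the right one I would use the substitution $z=x\circ x$, which bijects $\{x\geqp 0:\|x\|=1\}$ with $\{z\geqp 0:z\in H\}$ and satisfies $q(z)=\langle(G\circ xx^*)x,x\rangle\le\rho(G\circ xx^*)=\|G\circ xx^*\|_{sp}$ because $G\circ xx^*$ is positive semidefinite; minimizing over $x\geqp 0$ and invoking Proposition \ref{variosIB}(5) gives $m_+\le I_{sp}(G)$. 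Under the hypothesis the sandwich collapses to $m_+=c$, so a minimizer $z^{\star\star}\geqp 0$ of $q$ on the simplex (which exists by compactness) attains the global minimum of $q$ over all of $H$; by the first-order description recalled in the first paragraph it satisfies $Gz^{\star\star}=c\,\mathds{1}$, and hence $u:=c^{-1}z^{\star\star}\geqp 0$ obeys $Gu=\mathds{1}$.

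The two genuinely delicate points, and where I would be most careful, are the Perron--Frobenius identification $\rho(G\circ xx^*)=1/s$ in $(\Leftarrow)$ — where possible zero entries of $u$ force the passage to the support $J$ and the strict positivity of $x_J$ — and the inequality $m_+\le I_{sp}(G)$ in $(\Rightarrow)$, which is the single place where the attainment of $I_{sp}$ at a rank-one nonnegative vector, Proposition \ref{variosIB}(5), is essential. Everything else reduces to the elementary first-order analysis of the convex quadratic $q$ on the affine hyperplane $H$ and on the simplex.
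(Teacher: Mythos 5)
Your proof is correct. One point of context first: the paper itself contains no proof of Proposition \ref{3.3} --- it is recalled, with an immediate \QED, from \cite{[CS]}, so the comparison is with that reference's machinery rather than with an in-paper argument. Your route is complete and self-contained modulo exactly the facts the paper does record: item 2 of Proposition \ref{variosIB} for $I(G)\le I_{sp}(G)$, item 1 for the solvability of $Gy=\mathds{1}$ and $\sum_i y_i=I(G)^{-1}$, and item 5 for the attainment of $I_{sp}$ at a nonnegative rank-one $xx^*$ (you correctly flag this last as the one indispensable input in the forward direction). The two delicate steps you isolate are both handled properly: in $(\Leftarrow)$, passing to $J=\sop\{x\}$ before invoking Perron--Frobenius is genuinely necessary, since a merely nonnegative eigenvector of a nonnegative matrix need not belong to the spectral radius, and on $J$ the strict positivity of $x_J$ makes the Collatz--Wielandt bounds pin $\rho(M_J)=1/s$; in $(\Rightarrow)$, the sandwich $I(G)\le m_+\le I_{sp}(G)$ combined with the first-order condition $Gz=\alpha\,\mathds{1}$ on the hyperplane (valid for the convex quadratic even when $G$ is singular) is exactly what extracts a nonnegative solution of $Gu=\mathds{1}$. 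This mechanism is essentially the one packaged in Proposition \ref{el x} of the appendix, which derives $P_J\,u=\uno_J$ from a nonnegative minimizer $x$, so your argument reconstructs the \cite{[CS]} proof idea in a streamlined form. One bookkeeping remark: Eq.\ \eqref{2.5} as printed places $\min\{\langle Gz,z\rangle:\sum_i z_i=1\}$ in the chain of quantities equal to $I(G)^{-1}$, whereas the correct value of that minimum is $I(G)$ (test $G=I_m$, where the minimum is $1/m=I(I_m)$, not $m$); your independent derivation via $z^\star=I(G)\,y$ and convexity yields the correct statement, so nothing in your proof rests on the misprinted equality.
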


\begin{pro}\label{el x} \rm
Let $G\in \cM_m(\C )^+$. Denote by $P=G\circ {\overline G}$. If $x \in \bsim$, then
$$
\|G \circ xx^*\|_{_2}^2 = \suml_{i,j} |G_{ij}|^2 |x_i|^2 |x_j|^2
= \api P( x\circ x) , x\circ x\cpi =
\ \api  (P\circ xx^*) x ,  x\cpi  \  \le \|P\circ xx^*\| \ .
$$
Take $\ x\geqp 0$ such that
$\| x\|=1$ and $\| G\circ xx^*\|_2 = I_{2} ( G)$.
Then 
$$
\peso{$(G \circ \overline{G} \circ xx^*) x = I(P_J) x $ , \ \ \
where \ \ \  $J = \{ i \in \IN{m}\  : \  x_i \ne 0 \}$ .}
$$
In this case, they hold that
\ben
\item  The vector $u = I(P_J) \inv (x_J \circ x_J) \in \C^J$ has strict
positive entries and $P_J u = \uno_J\, $.
\item $I_{sp} (P) = I_{sp} (P_J) = I(P_J)$.
\item
$I_{sp} (  P) = \|P\circ xx^*\| $. \QED
\een
\end{pro}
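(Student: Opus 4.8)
The plan is to verify the opening chain of identities by direct expansion, and then to extract the distinguished eigen-equation from a first-order (Lagrange) condition, after which items 1--3 reduce to Propositions \ref{variosIB} and \ref{3.3} plus a Perron--Frobenius argument. For the identities, write $z = x\circ x = (|x_i|^2)_{i\in\IN{m}}$ and recall $P_{ij} = |G_{ij}|^2$, so that $0\leqp P$, $P = P^T$, and $P\in\matmreal^+$ (the Hadamard product $G\circ\overline G$ of positive matrices is positive). The expansion $\|G\circ xx^*\|_{_2}^2 = \suml_{i,j}|G_{ij}|^2|x_i|^2|x_j|^2$ is immediate from $\|M\|_{_2}^2 = \suml_{i,j}|M_{ij}|^2$. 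The middle equalities come from $\api Pz, z\cpi = \suml_{i,j}P_{ij}\,z_i\,z_j$ together with the identity $P\circ xx^* = D\,P\,D$, $D = \diag{x}$, which gives $\api (P\circ xx^*)x, x\cpi = \suml_{i,j}P_{ij}\,x_i^2\,x_j^2 = \api Pz, z\cpi$. Since $P\circ xx^*\in\matmreal^+$, the closing bound is the Rayleigh estimate $\api (P\circ xx^*)x, x\cpi \le \|P\circ xx^*\|$ for the unit vector $x$.

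Next I would analyze the minimizer. Such an $x\geqp 0$ with $\|x\|=1$ and $\|G\circ xx^*\|_{_2} = I_2(G)$ exists by Proposition \ref{variosIB}(5), and it is a critical point of $L(x) = \api Pz, z\cpi$ on $S^{m-1}$. Lemma \ref{RSPC}, applied with $B = P$, then gives $P_J\,z_J = I(P_J)\,\uno_J$ on $J = \sop\{x\}$; extending by $x_k = 0$ for $k\notin J$ turns this into the displayed identity $(P\circ xx^*)x = I(P_J)\,x$, since $((P\circ xx^*)x)_k = x_k\,(Pz)_k$ and $(Pz)_k = I(P_J)$ for $k\in J$. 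Setting $u = I(P_J)^{-1}(x_J\circ x_J)$, the relation $P_J z_J = I(P_J)\uno_J$ reads $P_J u = \uno_J$, and $u\geqp 0$ has strictly positive entries because $x_j\neq 0$ on $J$; this is item 1.

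For item 2, pairing $P_J z_J = I(P_J)\uno_J$ with $z_J$ and using $\tr z_J = \|x\|^2 = 1$ gives $I(P_J) = \api Pz, z\cpi = \|G\circ xx^*\|_{_2}^2 = I_2(G)^2 = I_{sp}(P)$, the last step by Proposition \ref{variosIB}(6). Applying Proposition \ref{3.3} to $P_J$ (nonnegative entries, positive semidefinite, with the strictly positive solution $u$ of $P_J u = \uno_J$) yields $I_{sp}(P_J) = I(P_J)$, so $I_{sp}(P) = I_{sp}(P_J) = I(P_J)$. For item 3 I would show $I(P_J)$ is the largest eigenvalue of $P\circ xx^* = D\,P\,D$: this matrix is supported on $J\times J$, its block there is similar to the nonnegative matrix $P_J\,\diag{z_J}$, of which $\uno_J > 0$ is an eigenvector at eigenvalue $I(P_J)$; by Perron--Frobenius a strictly positive eigenvector of a nonnegative matrix belongs to its spectral radius, so $I(P_J) = \rho(P_J\,\diag{z_J})$. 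As $P\circ xx^*$ is positive semidefinite its spectral norm equals its top eigenvalue, whence $\|P\circ xx^*\| = I(P_J) = I_{sp}(P)$.

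I expect item 3 to be the main obstacle: the Rayleigh inequality of the first paragraph gives only $I(P_J)\le\|P\circ xx^*\|$, so the real content is to upgrade the eigenvalue $I(P_J)$ to the spectral norm. The Perron--Frobenius step (a strictly positive eigenvector forces the Perron root) is the crux, and it tacitly needs $I(P_J)>0$, equivalently that $P_J$ have positive diagonal; I would record this explicitly, noting that indices $j$ with $G_{jj}=0$ contribute zero rows to $G$ and hence cannot lie in the support of an optimal $x$.
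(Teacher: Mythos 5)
Your argument is correct, but there is nothing in the paper to measure it against line by line: Proposition \ref{el x} is stated in the appendix without proof, as a result recalled from \cite{[CS]} (like Propositions \ref{variosIB} and \ref{3.3}). What you have built is a self-contained reconstruction from the paper's own toolkit, and it holds up. The opening chain is indeed the computation $P\circ xx^* = D\,P\,D$ with $D=\diag{x}$, and you are right that the closing Rayleigh bound needs $\|x\|=1$ (the intended reading of $x\in\bsim$). Invoking Lemma \ref{RSPC} with $B=P=\gog$ is legitimate even though that lemma is stated for $B=B_{\FS}$: its proof uses only that $B$ is real symmetric, positive semidefinite, $0\leqp B$ with positive diagonal, and it rests on Proposition \ref{variosIB} but not on Proposition \ref{el x}, so proving an appendix result via a Section 4 lemma creates no circularity. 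Your Perron--Frobenius step for item 3 is the genuine content and is complete: $\uno_J$ is a strictly positive eigenvector of the entrywise nonnegative matrix $P_J\,\diag{z_J}$ at eigenvalue $I(P_J)$, whence $I(P_J)=\rho\big(P_J\,\diag{z_J}\big)=\rho\big(D_J\,P_J\,D_J\big)=\|P\circ xx^*\|$, the last equality because $P\circ xx^*$ is positive semidefinite and vanishes off the block $J\times J$. Items 1 and 2 then follow exactly as you say, from $\tr z_J=\|x\|^2=1$, Proposition \ref{variosIB} (items 5 and 6) and Proposition \ref{3.3}.

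One correction to your closing remark. It is not true that indices $j$ with $G_{jj}=0$ can never lie in the support of an optimal $x$: if $G$ has a zero row then $I_2(G)=0$, and for $G=\diag{(1,0)}$ the \emph{unique} minimizer is $x=e_2$, supported precisely on the zero row. In that degenerate case the eigen-equation and items 2 and 3 still hold trivially with $I(P_J)=0$, but item 1 collapses since $I(P_J)^{-1}$ is undefined; so the tacit hypothesis in the proposition is nondegeneracy ($G_{ii}>0$ for all $i$, or at least $I_2(G)>0$), under which the argument inside Lemma \ref{RSPC} gives $\la=\api P_J z_J , z_J\cpi>0$, i.e.\ $I(P_J)>0$, and everything you wrote goes through. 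You identified the right hidden hypothesis; only your proposed patch (that zero-row indices are automatically excluded from the optimal support) is stated too strongly.
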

}

\fontsize {8}{10}\selectfont

\noi {\bf Pedro Massey, Mariano Ruiz and Demetrio Stojanoff}

\noi Depto. de Matem\'atica, FCE-UNLP,  La Plata, Argentina
and IAM-CONICET  

\noi e-mail: massey@mate.unlp.edu.ar , maruiz@mate.unlp.edu.ar  and demetrio@mate.unlp.edu.ar

\end{document}